\documentclass[a4paper,12pt]{amsart}
\usepackage[utf8x]{inputenc}
\usepackage{amsmath}
\usepackage{amsfonts}
\usepackage{amssymb}
\usepackage{hyperref}
\usepackage{fourier}
\usepackage{epic}
\usepackage[a4paper,left=2.2cm,right=2.2cm,top=2.7cm, bottom=2.7cm]{geometry}

\title{Non-separating cocircuits and graphicness in matroids}
\author{Jo\~ao Paulo Costalonga}
\address{{\upshape joaocostalonga@yahoo.com.br }\\
	 Rua Bricio Mesquita, 17\\
	 Cachoeiro de Itapemirim, ES\\
	 29300-750\\
         Brazil.
	 }

\newenvironment{proofof}{}{\hfill$\Box$\medskip}

\newtheorem{lem}{Lemma}[section]
\newenvironment{lemma}{\begin{lem}\stepcounter{subsection}}{\end{lem}}
\newtheorem{theo}[lem]{Theorem}
\newenvironment{theorem}{\begin{theo}\stepcounter{subsection}}{\end{theo}}
\newtheorem{cor}[lem]{Corolary}
\newenvironment{corolary}{\begin{cor}\stepcounter{subsection}}{\end{cor}}
\newtheorem{notat}[lem]{Notation}

\newtheorem{prop}[lem]{Proposition}

\newtheorem{conj}[lem]{Conjecture}
\newenvironment{conjecture}{\begin{conj}\stepcounter{subsection}}{\end{conj}}

\newcommand{\CC}{\mathcal{C}}

\newcommand{\ncont}{\nsubseteq}
\newcommand{\cont}{\subseteq}
\renewcommand{\u}{\cup}
\renewcommand{\i}{\cap}
\newcommand{\s}{^*}
\newcommand{\del}{\backslash}
\newcommand{\emp}{\emptyset}
\newcommand{\flist}[2]{#1_{1},\dots,#1_{#2}}
\begin{document}

\begin{abstract}
Let $M$ be a $3$-connected binary matroid and let $Y(M)$ be the set of elements of $M$ avoiding at least $r(M)+1$ non-separating cocircuits of $M$. Lemos proved that $M$ is non-graphic if and only if $Y(M)\neq\emp$. We generalize this result when  by establishing that $Y(M)$ is very large when $M$ is non-graphic and $M$ has no $M\s(K_{3,3}''')$-minor if $M$ is regular. More precisely that $|E(M)-Y(M)|\le 1$ in this case. We conjecture that when $M$ is a regular matroid with an $M\s(K_{3,3})$-minor, then $r\s_M(E(M)-Y(M))\le 2$. The proof of such conjecture is reduced to a computational verification.
\end{abstract}
\maketitle

\section{Introduction}

A cocircuit in a connected matroid is said to be non-separating if its deletion results in a connected matroid. For a 3-connected graphic matroid, note that the non-separating cocircuits correspond to the stars of the vertices in its graphic representation.

Non-separating cocircuits play an important role in the understanding of the structure of graphic matroids, as we can see in some instances that follows. Non-separating cocircuits were first studied by Tutte~\cite{Draw}, in the cographic case, to give a characterization of the planar graphs. Tutte also proved that the non-separating cocircuits of the bond matroid of a 3-connected graph spans, over $GF(2)$, its cycle-space. Tutte's results was generalized by Bixby and Cunningham~\cite{Bixby}, as we summarize in the following theorem, which was conjectured by Edmonds:

\begin{theorem}\label{bixby}
Let $M$ be a $3$-connected binary matroid with at least $4$ elements. Then
\begin{enumerate}
\item [(a)] the non-separating cocircuits of $M$ span its cocircuit-space,
\item [(b)] each element of $M$ is in at least two non-separating cocircuits, and
\item [(c)] $M$ is graphic if and only if each element of $M$ is in at most two non-separating cocircuits.
\end{enumerate}
\end{theorem}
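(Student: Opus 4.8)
The plan is to reduce part~(c) to parts~(a) and~(b), and then to prove (a) and (b) together by induction on $|E(M)|$.

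For the ``only if'' part of~(c), suppose $M=M(G)$ for a $3$-connected graph $G$. Every cocircuit of $M$ has the form $\delta_G(S)$ with $G[S]$ and $G[V(G)\del S]$ both connected, and then $M\del\delta_G(S)=M(G[S])\oplus M(G[V(G)\del S])$, a direct sum that is connected only when one of the two sides has a single vertex; hence the non-separating cocircuits of $M$ are exactly the $|V(G)|$ vertex stars, and each edge lies in precisely two of them. For the ``if'' part, assume every element of $M$ lies in at most two non-separating cocircuits, hence, by~(b), in exactly two. Let $C_1\s,\dots,C_k\s$ be the non-separating cocircuits of $M$ and let $G$ be the graph on vertex set $\{C_1\s,\dots,C_k\s\}$ in which an element $e$ is the edge joining the two cocircuits that contain it. Then $\delta_G(C_i\s)=C_i\s$ for each $i$, so the cut space of $G$ contains $\langle C_1\s,\dots,C_k\s\rangle$, which by~(a) is the whole cocircuit space of $M$. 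A disconnection of $G$ would induce a separation of $M$, so $G$ is connected; hence $\sum_i C_i\s=0$ is, up to itself, the only $GF(2)$-dependence among the $C_i\s$, so $\dim\langle C_1\s,\dots,C_k\s\rangle=k-1$, which is also the dimension of the cut space of $G$. Thus $M$ and $M(G)$ are binary matroids on $E(M)$ with the same cocircuit space, so $M=M(G)$ is graphic.

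It remains to prove (a) and (b); after disposing of the finitely many small $3$-connected binary matroids directly, I would induct using Tutte's Wheels-and-Whirls Theorem (no whirls arise, $M$ being binary): either $M$ is a wheel $M(W_n)$, which is graphic and already handled, or there is an element $e$ with $M\del e$ or $M/e$ $3$-connected. In the deletion case $e$ is neither a loop nor a coloop of $M$, so $r(M\del e)=r(M)$, and induction gives that the non-separating cocircuits of $M\del e$ span its cocircuit space and that each of its elements lies in at least two of them. The crux is to ``lift'' a non-separating cocircuit $D\s$ of $M\del e$ to $M$: either $D\s$ or $D\s\cup\{e\}$ is a cocircuit of $M$, and since $(M\del e)\del D\s=(M\del D\s)\del e$ is connected one argues that the lifted cocircuit is non-separating in $M$ too — the delicate point being to exclude the configurations in which $e$ is a coloop of $M\del D\s$. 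A dimension count then shows that the lifted cocircuits already span the cocircuit space of $M$, giving~(a); and since distinct non-separating cocircuits of $M\del e$ have distinct lifts, every element of $M$ other than $e$ lies in at least two non-separating cocircuits of $M$. The case where $M/e$ is $3$-connected is dual in spirit — one uses that the cocircuits of $M/e$ are precisely the cocircuits of $M$ avoiding $e$ — but now such a cocircuit cannot be enlarged, so one must instead rule out that deleting it disconnects $M$ even though it does not disconnect $M/e$.

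The point I expect to be hardest is part~(b) for the single element $e$, since two non-separating cocircuits of $M$ through $e$ are not furnished directly by the induction. For this I would invoke Tutte's theory of bridges of a cocircuit: if $C\s$ is a separating cocircuit of $M$ whose deletion leaves components on sets $E_1,\dots,E_t$, then contracting all but one of the $E_i$ yields a connected minor in which $C\s$ is non-separating, and controlling, for each element, how many non-separating cocircuits it can avoid — a count in the same spirit as those studied in this paper — is what finally closes the argument. Overall I anticipate that the bulk of the work lies in the connectivity verifications inside these lifting steps and in organizing the exceptional small cases, rather than in any single conceptual leap.
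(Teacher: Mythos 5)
A preliminary remark: the paper does not prove Theorem~\ref{bixby} at all --- it is quoted as the Bixby--Cunningham theorem (conjectured by Edmonds) and used only as background --- so your proposal has to be judged on its own merits rather than against an internal argument. Your reduction of (c) to (a) and (b) is essentially correct and standard: in a $3$-connected graphic matroid the non-separating cocircuits are exactly the vertex stars, and conversely, under ``exactly two non-separating cocircuits per element'' the graph $G$ you build has cut space containing the span of the $C_i\s$, which by (a) is the whole cocircuit space of $M$; the connectivity of $G$ does need the short orthogonality/rank count showing that a partition of the non-separating cocircuits into two groups with disjoint supports forces $M$ to be disconnected, but that is a fillable step, and two binary matroids on the same ground set with the same cocircuit space coincide.

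The genuine gap is in (a) and (b), which is where all the substance of the theorem lies, and your induction does not close there. In the deletion case the lift of a non-separating cocircuit $D\s$ of $M\del e$ is automatically non-separating in $M$ when the lift is $D\s\u\{e\}$, but when the lift is $D\s$ itself you must exclude that $e$ is a coloop of $M\del D\s$ (equivalently, that $D\s\u\{e\}$ carries further cocircuits through $e$); you flag this as ``the delicate point'' and never resolve it, and it is not a routine verification. The contraction case is worse than ``dual in spirit'': the cocircuits of $M/e$ are precisely the cocircuits of $M$ avoiding $e$, so even if every lift were shown non-separating they span only a subspace of dimension $r(M)-1$ and contain no cocircuit through $e$; hence the induction as set up cannot deliver (a) in that case, nor (b) for the element $e$ in either case. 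Producing (at least two) non-separating cocircuits through a prescribed element is exactly the hard core of Bixby--Cunningham (it is the same difficulty that Theorem~\ref{lemos}(a) addresses), and your final paragraph delegates it to ``Tutte's theory of bridges'' plus an unspecified counting argument without actually giving one. As it stands, the proposal is a reasonable plan in which the routine parts are done and the hardest steps are correctly identified but not carried out, so it is not yet a proof.
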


Lemos, in \cite{Lemos2004} and \cite{Lemos2009}, proved results of similar nature as synthesized in the next theorem:

\begin{theorem}\label{lemos}
Let $M$ be a $3$-connected binary matroid with at least $4$ elements. Then
\begin{enumerate}
\item [(a)] for $e\in E(M)$, the non-separating cocircuits of $M$ that avoid $e$ span a hyperplane in the cocircuit-space of $M$, in particular $e$ avoids at least $r(M)-1$ cocircuits of $M$. Moreover, $e$ avoids more than $r\s(M)-1$ non-separating cocircuits of $M$ if and only if the set of the non-separating cocircuits of $M$ avoiding $e$ is linearly dependent, and
\item [(b)] $M$ is graphic if and only if each element of $M$ avoids at most $r\s(M)-1$ non-separating cocircuits.
\end{enumerate}
\end{theorem}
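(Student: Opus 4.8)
The plan is to prove part (a) first and then derive part (b) from it together with Theorem~\ref{bixby}. Fix $e\in E(M)$; since $M$ is $3$-connected with at least four elements, $e$ is neither a loop nor a coloop, so the cocircuit-space of $M$ has dimension $r(M)$ and the cocircuits of $M$ avoiding $e$ are precisely the cocircuits of $M/e$. Hence they span the subspace $H_e$ of the cocircuit-space of $M$ consisting of the vectors with zero $e$-coordinate, and $H_e$ is a hyperplane: it equals the cocircuit-space of $M/e$, of dimension $r(M/e)=r(M)-1$. Consequently the non-separating cocircuits of $M$ avoiding $e$ span \emph{some} subspace of $H_e$; the substance of the first assertion of (a) is that they span \emph{all} of $H_e$.

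That assertion reduces to the following claim: every cocircuit of $M$ avoiding $e$ lies in the span of the non-separating cocircuits of $M$ avoiding $e$ (given the claim, and Theorem~\ref{bixby}(a), which shows that the cocircuits avoiding $e$ already span $H_e$, one gets that the non-separating cocircuits avoiding $e$ span all of $H_e$). I would prove the claim by induction on $|E(M)|$. If $M$ is a wheel then $M$ is graphic and the claim is checked directly from the description of the non-separating cocircuits as vertex stars; the remaining small $3$-connected binary matroids are the base cases. Otherwise, by Tutte's wheels theorem choose an element $f$ (one can arrange $f\neq e$) with $M\backslash f$ or $M/f$ $3$-connected, apply the inductive hypothesis to that minor, and lift: express each non-separating cocircuit of the minor that avoids $e$ as a sum of non-separating cocircuits of $M$ avoiding $e$. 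This lifting is the step I expect to be the main obstacle, because ``non-separating'' is not inherited verbatim under deletion or contraction: a cocircuit avoiding $e$ that is non-separating in $M/e$ need not be non-separating in $M$ (this happens when removing it leaves $e$ a coloop), and analogous failures occur when $f$ is re-added. So one must set up auxiliary lemmas that control, cocircuit by cocircuit, how the non-separating cocircuits avoiding $e$ of the reduced matroid relate to sums of those of $M$.

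Once the hyperplane statement is established the rest of (a) is formal: a spanning set of the $(r(M)-1)$-dimensional space $H_e$ has at least $r(M)-1$ members, giving the ``in particular''; and such a spanning set is linearly independent exactly when it has exactly $r(M)-1$ members and linearly dependent exactly when it has more, which, applied to the non-separating cocircuits of $M$ avoiding $e$, is the stated equivalence.

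For part (b): if $M$ is graphic, take a $3$-connected graph $G$ with $M=M(G)$; the non-separating cocircuits are the $r(M)+1$ vertex stars of $G$, each element lies in exactly two of them, so each element avoids exactly $r(M)-1$ of them and the ``at most'' bound holds. Conversely, suppose every element of $M$ avoids at most---hence, by (a), exactly---$r(M)-1$ non-separating cocircuits. Writing $t$ for the total number of non-separating cocircuits of $M$, every element then lies in exactly $t-(r(M)-1)$ of them; by Theorem~\ref{bixby}(b) this number is at least $2$, so $t\geq r(M)+1$. If $t=r(M)+1$, every element lies in exactly two non-separating cocircuits and $M$ is graphic by Theorem~\ref{bixby}(c), so it remains to exclude $t\geq r(M)+2$ under the hypothesis. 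I would do this by a linear-algebra argument on the element--cocircuit incidence, using that for every $e$ the non-separating cocircuits avoiding $e$ form a basis of $H_e$, to contradict Theorem~\ref{bixby}; the difficulty here is of the same flavour as in the second paragraph---turning the linear-independence hypothesis into the rigid combinatorial structure forced by graphicness.
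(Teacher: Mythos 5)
This theorem is not proved in the paper at all: it is quoted as background from Lemos \cite{Lemos2004,Lemos2009} (only a piece of the underlying machinery is imported later, as Lemma \ref{lemmalemos}), so there is no in-paper argument to compare yours against and your attempt has to stand on its own. As written it does not: it is a plan whose two decisive steps are left open, and those steps are exactly where the content of Lemos's theorems lies. For (a), your reduction is fine --- the cocircuits of $M$ avoiding $e$ are the cocircuits of $M/e$, they span the hyperplane $H_e$ of vectors with zero $e$-coordinate (using that $e$ is not a loop, which $3$-connectedness with $|E(M)|\ge 4$ gives), and the counting and dependency consequences are formal once the spanning claim is known. But the spanning claim \emph{is} the theorem, and your induction stalls precisely at the point you flag: after passing to a $3$-connected minor obtained by deleting or contracting $f$, a non-separating cocircuit of the minor avoiding $e$ need not lift to, or be expressible as a sum of, non-separating cocircuits of $M$ avoiding $e$, and you supply no lemma controlling this. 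That control is not a routine verification; in the literature it requires statements of the type of Lemma \ref{lemmalemos} (Lemos's inequality $dep_A(M)\ge dep_A(co(M\backslash e))$ for a suitably chosen relabelling and a suitably chosen element $e$ with $co(M\backslash e)$ $3$-connected), and nothing in your sketch produces such a tool or even identifies which removal to perform so that the lift works.

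The same difficulty recurs in (b). The forward direction (vertex stars) is correct, as is the reduction of the converse to excluding $t\ge r(M)+2$ total non-separating cocircuits, with the case $t=r(M)+1$ settled by Theorem \ref{bixby}(c). But the exclusion of $t\ge r(M)+2$ --- equivalently, showing that ``every element avoids exactly $r(M)-1$ non-separating cocircuits'' forces every element to lie in exactly two of them --- is again deferred to an unspecified ``linear-algebra argument on the element--cocircuit incidence''. Nothing elementary appears to close this: from the hypothesis one only extracts facts such as that every nonzero dependency among the non-separating cocircuits must, for each element $e$, use at least two cocircuits containing $e$, which is far from a contradiction; the known proofs of this equivalence go through substantial induction machinery rather than a direct incidence count. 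So both halves of your proposal have genuine gaps at their key steps; what you have correctly done is isolate where the difficulty sits, not overcome it.
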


There are another characterizations of graphicness in binary matroids using non-separating cocircuits in Kelmans~\cite{Kelmans-Graph}, Lemos, Reid, and Wu~\cite{Reid} and Mighton~\cite{Mighton}. Kelmans~\cite{Kelmans-vertex} gave a simple proof of Whitney's 2-isomorphism Theorem using non-separating cocircuits. Some algorithms for recognizing graphicness in binary matroids are based on concepts related to non-separating cocircuits, see Tutte~\cite{Tutte}, Cunningham~\cite{Cunningham}, Mighton~\cite{Mighton} and Wagner~\cite{Wagner}.
 
\newcommand{\y}{\widetilde{Y}}
Theorems \ref{bixby} and \ref{lemos} identify two sets of obstructions for graphicness in a $3$-connected binary matroid $M$. We define the set $X(M)$ as the set of elements of $M$ meeting more than two non-separating cocircuits and the set $Y(M)$ of the elements of $M$ avoiding more than $r\s(M)-1$ non-separating cocircuits. Equivalently, by Theorem \ref{lemos}, (a), we may define $Y(M)$ as the set of the elements of $M$ avoiding all the members of a linearly dependent family of non-separating cocircuits of $M$. Some sharp lower bounds for $|X(M)|$ and $|X(M)\i Y(M)|$  are given by Lai, Lemos, Reid and Shao~\cite{Lai} when $M$ is not graphic. In this work we prove that $Y(M)$ contains almost all elements of $E(M)$ if $M$ is not graphic. Define $\y(M):=E(M)-Y(M)$. The main result we establish here is:


\begin{theorem}\label{principal}
Let $M$ be a $3$-connected non-graphic binary matroid. Then
\begin{enumerate}
\item [(a)] if $M$ is non-regular, then $|\y(M)|\leq1$. Moreover, $\y(M)=\emp$ if $M$ has no $S_8$-minor or $M$ has an $PG(3,2)$-minor.
\item [(b)] if $M$ is regular with no $M\s(K_{3,3}''')$-minor or with a $M\s(K_5)$-minor, then $\y(M)=\emp$.
\end{enumerate}
\end{theorem}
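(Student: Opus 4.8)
The plan is to prove Theorem~\ref{principal} by induction on $|E(M)|$, using Seymour's Splitter Theorem to delete or contract one element at a time while keeping the matroid $3$-connected and retaining a prescribed minor, and to track $\y$ along each step through Theorem~\ref{lemos}(a): $e\in Y(M)$ exactly when the non-separating cocircuits of $M$ avoiding $e$ are linearly dependent, so $e\in\y(M)$ means precisely that $e$ avoids $r(M)-1$ non-separating cocircuits and they form a basis of the hyperplane of the cocircuit space of $M$ consisting of the vectors supported off $e$. The induction is anchored at a finite list of matroids. In the non-regular case Tutte's excluded-minor theorem gives an $F_7$- or $F_7\s$-minor, and the anchors are $F_7$, $F_7\s$, $S_8$ and $PG(3,2)$; in the regular case a non-graphic regular matroid has an $M\s(K_5)$- or $M\s(K_{3,3})$-minor, the anchors are $M\s(K_5)$ and $M\s(K_{3,3})$, and $M\s(K_{3,3}''')$ enters not as an anchor but as the threshold past which $\y$ may become non-empty. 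For the anchors one computes $\y$ directly: in $F_7$ every cocircuit is non-separating (its complement is a three-point line, so deleting it leaves the connected matroid $U_{2,3}$) and every element lies on $r(F_7)=3$ of those lines, so the three non-separating cocircuits avoiding a fixed element are three distinct nonzero vectors of a $2$-dimensional subspace of the cocircuit space and are therefore dependent; hence $\y(F_7)=\emp$, and likewise $\y(F_7\s)=\emp$. A finite, computer-assisted verification gives $\y(PG(3,2))=\y(M\s(K_5))=\y(M\s(K_{3,3}))=\emp$, while a short computation yields $|\y(S_8)|=1$ --- the exception allowed by part~(a).

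The heart of the argument is a propagation lemma describing how $\y$ changes along a Splitter step. Let $M$ be $3$-connected binary, let $e\in E(M)$, and suppose $N:=M\del e$ or $N:=M/e$ is $3$-connected and still has the tracked minor. Using Theorem~\ref{lemos}(a) one reformulates $f\in\y(M)$, for $f\in E(N)$, as: the $r(M)-1$ non-separating cocircuits of $M$ avoiding $f$ are linearly independent; one then relates these to the non-separating cocircuits of $N$ avoiding $f$. In the contraction case the cocircuits of $M/e$ are exactly the cocircuits of $M$ avoiding $e$, so each non-separating cocircuit of $M$ avoiding both $e$ and $f$ that stays non-separating in $M/e$ appears among the non-separating cocircuits of $M/e$ avoiding $f$; a connectivity check (using $3$-connectivity of $M/e$ and a suitable choice of $e$) controls the few that do not survive, and then Theorem~\ref{bixby}(a) --- the non-separating cocircuits span the cocircuit space --- forces the surviving family to be a basis of the appropriate hyperplane, giving $f\in\y(M/e)$. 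The deletion case is dual in spirit but harder, because the cocircuits of $M\del e$ also include the minimal members among the sets $C\s\del e$ for cocircuits $C\s$ of $M$ through $e$, and deleting $e$ can destroy the connectivity of $M\del C\s$; it is here that the flexibility the Splitter Theorem leaves in choosing $e$ is essential.

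Assembling the induction then yields the theorem. For non-regular $M$: if $M$ has no $S_8$-minor then neither does any matroid in the chain, the chain can be built from $F_7$ or $F_7\s$, and the propagation lemma shows no element of $\y$ is ever created, so $\y(M)=\emp$; in general the chain can instead be anchored at $S_8$, and the propagation lemma keeps $|\y|$ at most $1$, giving $|\y(M)|\le1$; and if $M$ has a $PG(3,2)$-minor, a separate argument --- a $PG(3,2)$-minor forces enough non-separating cocircuits through every element of $M$, in the spirit of the $F_7$ computation --- gives $\y(M)=\emp$ directly. For regular $M$ one argues in parallel, with an $M\s(K_5)$-minor playing the role of $PG(3,2)$: an $M\s(K_5)$-minor forces $\y(M)=\emp$, while if $M$ has no $M\s(K_{3,3}''')$-minor then (since $M$ has an $M\s(K_5)$- or $M\s(K_{3,3})$-minor) either the previous case applies or the chain, built from $M\s(K_{3,3})$, never reaches $M\s(K_{3,3}''')$, so, just as for a chain avoiding $S_8$, no element of $\y$ is ever created and $\y(M)=\emp$.

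The main obstacle is the propagation lemma, and within it the deletion case together with the exact bookkeeping of when an element enters or leaves $\y$: non-separating cocircuits are not minor-monotone in any naive sense --- $\y$ can gain an element (passing from $F_7$ to $S_8$) and can also lose one (passing from $S_8$ up to $PG(3,2)$) --- so one must combine a careful connectivity analysis of $M\del e\del C\s$ and $M/e\del C\s$ with the linear-algebraic counting from Theorems~\ref{bixby}(a) and~\ref{lemos}(a) to pin the net change down to the claimed bound. In the regular case this counting is what makes the problem finite but sizeable; the sharp estimate $r\s_M(\y(M))\le2$ when $M$ has an $M\s(K_{3,3})$-minor --- the regime in which $\y$ is genuinely allowed to be non-empty --- is exactly what is reduced to a computer check in the Conjecture, whereas the proof of part~(b) needs only the weaker fact that $\y=\emp$ in the absence of an $M\s(K_{3,3}''')$-minor.
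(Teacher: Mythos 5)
Your plan hinges on a single-step ``propagation lemma'': along a Splitter chain $N=M\del e$ or $M/e$, no element of $\y$ is ever created (or at most one, in the $S_8$ case). That lemma is the whole difficulty, and in the form you need it is not available. What one can actually prove about a single removal (this is Lemos' inequality on $dep$, Lemma \ref{lemmalemos}, applied to the cosimplification $co(M\del e)$, not to raw deletion/contraction) is the inclusion $Y(co(M\del e))\subseteq Y(M)$, equivalently $\y(M)\subseteq cl\s_M\bigl(\y(co(M\del e))\cup e\bigr)$. This leaves room for $\y(M)$ to pick up the removed element $e$ itself even when $\y$ of the minor is empty --- and that really happens: $co(S_8\del e)\cong F_7\s$ with $\y(F_7\s)=\emp$ but $|\y(S_8)|=1$. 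So ``no creation along a chain avoiding $S_8$ (or $M\s(K_{3,3}''')$)'' is essentially equivalent to the theorem you are trying to prove; your sketch (``a connectivity check controls the few cocircuits that do not survive, and Theorem \ref{bixby}(a) forces a basis'') does not close this gap, and no bookkeeping of a one-element step can, because the new element is simply unconstrained by the minor. The paper gets around this by removing \emph{several} elements at once: Theorem \ref{whittle} supplies a $k$-coindependent set of vertically $N$-removable elements $e_1,\dots,e_k$, each yielding a coflat $cl\s_M(\y(co(M\del e_i))\cup e_i)$ containing $\y(M)$, and the submodularity argument (Lemma \ref{flats}, packaged in Lemmas \ref{almost-engine} and \ref{engine}) bounds the corank of their intersection; with $k=2$ this forces $\y(M)=\emp$ and with $k=3$ it forces $|\y(M)|\le 1$ in a minimal counterexample.

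A second, smaller but still genuine, gap is the base of your induction. Because the ``engine'' only fires when the corank gap to the tracked minor is at least $2$ (resp.\ $3$), the induction does not bottom out at your six anchors but at whole families: all corank-$4$ $3$-connected binary matroids, the corank-$5$ cographic extensions of $M\s(K_{3,3})$ (including the exceptional $M\s(K_{3,3}''')$, which is precisely where $\y\neq\emp$ occurs), and single-element coextensions of $S_8$, of $M\s(K_{3,3}^{(i,0)})$ and of $PG(3,2)\s$. The paper handles these by a hand classification plus computer-assisted checks (Lemmas \ref{k33-ext}, \ref{k5-start}, \ref{comput}); your remark that a $PG(3,2)$-minor ``forces enough non-separating cocircuits through every element, in the spirit of the $F_7$ computation'' is an expectation, not an argument, and the analogous claim for $M\s(K_5)$ likewise needs the splitter-based reductions and Lemma \ref{k33-ext}(c). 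Until you either prove a genuine one-step propagation statement (which the $F_7\s\to S_8$ example shows must be weaker than what your assembly step uses) or replace it by a multi-element argument of the above kind, and supply the finite base cases, the proposal does not yield Theorem \ref{principal}.
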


The following conjecture generalizes this last theorem.

\begin{conjecture}\label{todo}
Let $M$ be a $3$-connected non-graphic binary matroid. Then $r\s_M(\y(M))\le2$.
\end{conjecture}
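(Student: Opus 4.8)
\medskip

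\emph{A plan toward Conjecture \ref{todo}.} A $3$-connected non-graphic binary matroid $M$ is simple and cosimple, and so is $M\s$; hence $r\s_M(\y(M))\le2$ is equivalent to the assertion that $\y(M)$ has at most two elements or is a triad of $M$. Theorem \ref{principal} already establishes the conjecture except in one case. Indeed, if $M$ is non-regular then $|\y(M)|\le1$ by part~(a); if $M$ is regular and has no $M\s(K_{3,3}''')$-minor, or has an $M\s(K_5)$-minor, then $\y(M)=\emp$ by part~(b); and every regular matroid with an $M\s(K_{3,3}''')$-minor is non-graphic, since $M\s(K_{3,3}''')$ is non-graphic and graphicness is minor-closed. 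So it suffices to treat a $3$-connected regular matroid $M$ with an $M\s(K_{3,3}''')$-minor and no $M\s(K_5)$-minor; let $\mathcal{E}$ be the minor-closed class of regular matroids with no $M\s(K_5)$-minor, so $M\in\mathcal{E}$.

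The plan is to induct on $|E(M)|$ along a splitter chain from $M$ down to $M\s(K_{3,3}''')$. That matroid is $3$-connected on $12$ elements and is neither a wheel (being non-graphic) nor a whirl (being binary), so Seymour's Splitter Theorem supplies, whenever $M\neq M\s(K_{3,3}''')$, an element $e$ for which $N:=M\del e$ or $N:=M/e$ is $3$-connected and retains an $M\s(K_{3,3}''')$-minor; then $N\in\mathcal{E}$, $N$ is non-graphic, and by induction $r\s_N(\y(N))\le2$. To lift this to $M$ one needs a transfer principle for non-separating cocircuits under a $3$-connected single-element deletion or contraction: a dictionary expressing the non-separating cocircuits of $M$ in terms of those of $N$, a count of how many each element avoids, and Theorem \ref{lemos}(a), which identifies the elements of $Y$ with those avoided by a linearly dependent family of non-separating cocircuits. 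Applied with care, these should put $\y(M)$ inside the dual-closure of $\y(N)\u\{e\}$ in $M$, and -- with a good choice of $e$, for instance a contraction with $e\notin\y(M)$ -- inside the dual-closure of $\y(N)$ itself, which closes the induction, since contracting an element does not change the dual rank of a set disjoint from it.

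The obstacle is that ``non-separating'' is not a local property: deleting or contracting an element can destroy the connectivity of $M$ with a cocircuit removed, so a non-separating cocircuit of $M$ may fail to remain non-separating in $N$, and conversely. Consequently the crude transfer only yields $r\s_M(\y(M))\le r\s_N(\y(N))+1$, overshooting by one; eliminating the surplus appears to require a refined chain theorem -- one keeping a contraction $3$-connected with the prescribed minor while avoiding the elements of $\y(M)$, or staying within a triad contained in $\y(M)$ -- together with an analysis showing that whenever the crude bound is attained, the structure of $M$ already forces $\y(M)$ to be a triad or $\y(N)$ to have corank at most one. I expect this to succeed once $|E(M)|$ exceeds an explicit bound $N_0$, the transfer requiring a sufficiently large stock of non-separating cocircuits, and to break down only for small matroids. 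What then remains is to verify $r\s_M(\y(M))\le2$ for the finitely many $3$-connected regular matroids on at most $N_0$ elements having an $M\s(K_{3,3}''')$-minor and no $M\s(K_5)$-minor, the base case $M=M\s(K_{3,3}''')$ among them; this finite verification is the computation to which the conjecture reduces. A Seymour decomposition of $M$ into graphic pieces, cographic pieces with no $K_5$-minor, and copies of $R_{10}$, with $\y$ tracked across $2$- and $3$-sums, would be a possible alternative, but the same globality of ``non-separating'' makes controlling the cocircuits that span both sides of a $3$-sum the crux there as well.
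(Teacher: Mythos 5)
Your reduction to the case of a regular $M$ with an $M\s(K_{3,3}''')$-minor and no $M\s(K_5)$-minor is fine and matches the paper's frame, but the heart of the matter is exactly where your plan stops. The paper does not (and cannot, with the tools you cite) fix the $+1$ overshoot by a ``refined chain theorem'' producing a single better removal; instead it beats the overshoot by using \emph{several} removals at once. By the dual chain theorems (Theorem \ref{whittle} and Theorem \ref{costalonga}), once $r\s(M)-r\s(N)\ge 5$ either $M$ has a $4$-coindependent set of vertically $N$-removable elements or an $N$-pyramid. For each removable $e_i$, Corollary \ref{ylift1} (which rests on Lemos's Lemma \ref{lemmalemos}, a statement about $co(M\del e)$ -- note there is no analogous transfer lemma for your contraction moves, so the Splitter Theorem step ``$N=M/e$'' has no supporting dictionary) gives $\y(M)\cont cl\s_M(\y(co(M\del e_i))\u e_i)$, a set of corank at most $3$; intersecting these four closures over a coindependent set and applying submodularity (Lemma \ref{flats}, via Lemma \ref{almost-engine}) forces $r\s_M(\y(M))\le 2$. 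The pyramid alternative is not an inconvenience to be excluded but a case that is analyzed directly: Lemma \ref{ylift2}(d) shows that if the conclusion holds for $M\del T\s$ and for the three $M\del f_i$, it holds for $M$. This mechanism is what converts the induction into a descent (Lemma \ref{engine} with $l=2$) and, crucially, what produces an \emph{explicit} residue: the engine needs corank gap at least $5$ over $M\s(K_{3,3}''')$, whence the finite verification is precisely Conjecture \ref{todo2} ($r\s(M)\le 9$), not an unspecified bound $N_0$.

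So the genuine gap in your proposal is twofold: (i) the step ``eliminate the surplus'' is asserted as expected rather than proved -- you have no argument that attaining the crude bound forces $\y(M)$ to be a triad or $\y(N)$ to have corank at most one, and in fact the paper never proves such a dichotomy for a single removal; and (ii) without an explicit mechanism you get no explicit size bound, so the claimed reduction to a finite computation is not actually achieved. A secondary but real issue is that your one-element Splitter Theorem step mixes deletions and contractions, whereas the only available lifting tool (Lemma \ref{lemmalemos}) concerns $co(M\del e)$; the paper accordingly works with vertically $N$-removable elements and measures progress by corank, not by $|E(M)|$. To repair your plan you would essentially have to reconstruct Lemmas \ref{almost-engine}, \ref{ylift2} and \ref{engine}.
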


In this paper we reduce the proof of Theorem \ref{principal} and Conjecture \ref{todo}. The theoretical part of the proof of conjecture is included in this paper. The computational part is being prepared. The computational part of proof of Theorem \ref{principal} is ready, but not properly written yet. Because of these missing pieces this paper is a preliminary report. More precisely, the theoretical part of proof of Conjecture \ref{todo}, reduces its proof to the verification of the following:

\begin{conjecture}\label{todo2}
Let $M$ be a $3$-connected non-graphic regular matroid with an $M\s(K_{3,3}''')$-minor, with no $M\s(K_5)$-minor and satisfying $r\s(M)\le 9$. Then $r\s_N(\y(N))\le2$. 
\end{conjecture}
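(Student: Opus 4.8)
The plan is to exploit the fact that, despite its phrasing, Conjecture~\ref{todo2} is a finite assertion, and to reduce it to a (nontrivial) computer verification. Write $N:=M\s$. If $M$ is $3$-connected, regular and $r\s(M)\le 9$, then $N$ is a $3$-connected regular matroid of rank $r(N)=r\s(M)\le 9$, and it is simple (recall that $3$-connected matroids on at least four elements are simple). By a classical bound of Heller, a simple regular matroid of rank $r$ has at most $\binom{r+1}{2}$ elements, so $|E(N)|\le\binom{10}{2}=45$, and there are only finitely many isomorphism classes of such $N$, hence of such $M$. Dualizing the minor hypotheses, $N$ has an $M(K_{3,3}''')$-minor and no $M(K_5)$-minor; since $M(K_{3,3}''')$ has rank $5$ this also yields $5\le r(N)\le 9$, and since $M(K_{3,3}''')$ is not cographic, $N$ is not cographic, so the matroid $M$ is automatically non-graphic, as required in the hypothesis.

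First I would assemble the finite list of matroids to be checked. Enumerating all regular matroids of rank up to $9$ is too costly near the top rank, so instead I would generate the relevant family by Seymour's Splitter Theorem anchored at $M(K_{3,3}''')$: as $M(K_{3,3}''')$ is neither a wheel nor a whirl, every $3$-connected matroid having it as a minor arises from $M(K_{3,3}''')$ by a sequence of single-element extensions and coextensions through $3$-connected matroids. Starting from $M(K_{3,3}''')$ I would perform all such steps, at each stage discarding any matroid that is non-regular, has an $M(K_5)$-minor, or has rank exceeding $9$ (each of these properties being preserved upward), and rejecting isomorphic copies. At most four coextensions occur along any branch, and the Heller bound caps the number of elements, so the process terminates and outputs exactly the list of candidate matroids $N$. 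An a priori reduction to internally $4$-connected matroids, combined with the known structure theory of $M(K_5)$-minor-free regular matroids, could be used to prune this list, but is not logically required.

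Next, for each $N$ in the list I would carry out the verification with $M:=N\s$. The cocircuits of $M$ are the circuits of $N$, and a cocircuit $C$ of $M$ is non-separating precisely when $M\del C$ is connected, equivalently when $N/C$ is connected (since $(M\del C)\s=N/C$). So I would enumerate the circuits $C$ of $N$, keep those with $N/C$ connected, and for each $e\in E(M)$ decide whether $e\in Y(M)$: by Theorem~\ref{lemos}(a) this holds iff the non-separating cocircuits of $M$ avoiding $e$ form a linearly dependent family over $GF(2)$, equivalently iff $e$ avoids more than $r\s(M)-1$ of them. This produces $\y(M)=E(M)-Y(M)$, and it then remains only to check that $\y(M)$ has rank at most $2$ in $N$, i.e.\ $r\s_M(\y(M))\le2$ as in Conjecture~\ref{todo}.

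The theoretical input here is slight; the hard part will be the computational scale and the certifiability of the search, not any single matroid computation. The list, though finite, may be sizeable, and for matroids close to the Heller bound the number of circuits of $N$ --- hence the number of cocircuits of $M$ whose non-separation must be tested --- can be large, so efficient isomorph rejection, efficient $M(K_5)$- and $M(K_{3,3}''')$-minor recognition, and careful bookkeeping are essential. Accordingly I expect the main obstacle to be proving that the generation is exhaustive (that no required $N$ is omitted) and organizing the computation so that its output can be independently re-verified.
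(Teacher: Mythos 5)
Your proposal is sound as a \emph{reduction} of Conjecture~\ref{todo2} to a finite computation, and in that respect it parallels what the paper itself does: note that the paper does not prove this statement either (it is labelled a conjecture precisely because the computational part is still in preparation); the paper only supplies, in Lemma~\ref{algorithm}, a search procedure whose successful execution would settle it. The routes to the finite search differ, however. You generate all candidates by the Splitter Theorem anchored at $M^*(K_{3,3}''')$ (in the dual, at $M(K_{3,3}''')$), cap the ground set by Heller's bound $\binom{r+1}{2}\le 45$, and prune only by regularity, absence of an $M^*(K_5)$-minor and rank; this is logically exhaustive --- every intermediate matroid of a splitter chain is a minor of the target, so none is wrongly discarded --- but it ignores the paper's structural machinery and would be computationally far heavier. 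The paper instead builds chains only by the cosimplification/coextension steps supplied by Theorem~\ref{whittle}, starts them at $M(K_{3,3}''')$ thanks to the uniqueness statement of Lemma~\ref{unique}, and, crucially, uses Lemma~\ref{engine} together with Corollary~\ref{ylift1} to show that a putative counterexample with $r^*\le 9$ can be reached through intermediate matroids on which $\widetilde{Y}$ is already nonempty (at $r^*=6$) or of rank at least $2$ (at $r^*=7,8$); hence at each level only those few matroids need be kept and coextended, which is the main saving that makes the verification realistic, and the $\Gamma$-construction of Section~\ref{section-comput} plus the automorphism relation~\eqref{relation} handle the enumeration and isomorph rejection you flag as the practical difficulty. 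Finally, be explicit in your write-up that, like the paper, you have only a reduction: until the search is actually run and certified exhaustive, Conjecture~\ref{todo2} remains unproved, so the enumeration-and-check scheme should not be presented as a completed argument.
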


More precisely, we prove in this version of this paper:

\begin{theorem}\label{todo-teo}
Conjectures \ref{todo} and \ref{todo2} are equivalent.
\end{theorem}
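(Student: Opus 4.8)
The plan starts with the easy direction: Conjecture \ref{todo} trivially implies Conjecture \ref{todo2}, since any matroid satisfying the hypotheses of the latter is a $3$-connected non-graphic binary matroid (with $N$ read as $M$ there), to which Conjecture \ref{todo} applies word for word. So the whole task is the converse. Assuming Conjecture \ref{todo2}, I would first reduce the claim $r\s_M(\y(M))\le2$, for $M$ a $3$-connected non-graphic binary matroid, to a single class of matroids, using Theorem \ref{principal}: if $M$ is non-regular then $|\y(M)|\le1$ by \ref{principal}(a), and if $M$ is regular but has no $M\s(K_{3,3}''')$-minor or has an $M\s(K_5)$-minor then $\y(M)=\emp$ by \ref{principal}(b), so in all these cases $r\s_M(\y(M))\le1$. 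What remains is to prove $r\s_M(\y(M))\le2$ for every $M$ in the class $\mathcal{R}$ of $3$-connected non-graphic regular matroids having an $M\s(K_{3,3}''')$-minor but no $M\s(K_5)$-minor --- exactly the class of Conjecture \ref{todo2} with the restriction $r\s(M)\le9$ removed. I would record at this point that $\mathcal{R}$ contains no wheels (graphic) and no whirls (non-regular), that a $3$-connected minor of a member of $\mathcal{R}$ is again in $\mathcal{R}$ as soon as it retains an $M\s(K_{3,3}''')$-minor (regularity and the absence of an $M\s(K_5)$-minor are minor-closed, and an $M\s(K_{3,3}''')$-minor forces non-graphicness), and that $r\s(M)\ge r\s(M\s(K_{3,3}'''))$ for all $M\in\mathcal{R}$.

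Next I would induct on $r\s(M)$ for $M\in\mathcal{R}$. If $r\s(M)\le9$ the desired inequality is exactly Conjecture \ref{todo2}. If $r\s(M)\ge10$, suppose for a contradiction $r\s_M(\y(M))\ge3$ and fix a coindependent triple $\{a,b,c\}\cont\y(M)$; by Theorem \ref{lemos}(a), $f\in\y(M)$ means precisely that the non-separating cocircuits of $M$ avoiding $f$ are linearly independent, equivalently that $f$ avoids the fewest non-separating cocircuits permitted by that theorem, these cocircuits then forming a basis of a hyperplane of the cocircuit-space. Since deleting a non-coloop lowers the corank by exactly one whereas contracting never lowers it, the descent must be a deletion; and because $r\s(M)>r\s(M\s(K_{3,3}'''))$, every witness of an $M\s(K_{3,3}''')$-minor of $M$ deletes at least one element, so some element can be removed without destroying that minor. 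The key step I expect to need --- the theoretical core of the paper --- is a reduction lemma asserting: \emph{if $M\in\mathcal{R}$ and $r\s(M)\ge10$, there exists $g\in E(M)-\{a,b,c\}$ with $N:=M\del g$ $3$-connected, $N\in\mathcal{R}$, $\{a,b,c\}$ coindependent in $N$, and $\{a,b,c\}\cont\y(N)$.} Given this lemma, $r\s(N)=r\s(M)-1$ and $r\s_N(\y(N))\ge r\s_N(\{a,b,c\})=3$, contradicting the inductive hypothesis; put otherwise, a minimum-corank counterexample in $\mathcal{R}$ would have corank at most $9$, and Conjecture \ref{todo2} forbids it. Together with the cases cleared above, Conjecture \ref{todo} follows.

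The hard part will be the reduction lemma, and specifically the control of $\y$ under a single deletion. Passing from $M$ to $N=M\del g$ perturbs the family of non-separating cocircuits in several ways: a non-separating cocircuit $C$ of $M$ avoiding $g$ may turn separating in $N$ (because $(M\del C)\del g$ can be disconnected though $M\del C$ is connected); some cocircuits can shrink when $g$ sits in a small cocircuit meeting them; and $N$ may gain new non-separating cocircuits. By Theorem \ref{lemos}(a) applied to $N$, the number of non-separating cocircuits of $N$ avoiding $a$ is at least the dimension of $N$'s cocircuit-space minus one, which equals the same quantity for $M$; hence $a\in\y(N)$ exactly when that number is not strictly larger, i.e. when deleting $g$ does not on balance increase the count of non-separating cocircuits avoiding $a$, and similarly for $b$ and $c$. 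On top of this, $g$ must keep $N$ $3$-connected, must leave $N$ with an $M\s(K_{3,3}''')$-minor, and must avoid the coloops of $M\del\{a,b,c\}$ (otherwise $\{a,b,c\}$ drops to corank two in $N$). The crux is to show that once $r\s(M)\ge10$ the elements forbidden by all these requirements do not cover $E(M)$; I would attack this by combining a splitter-theorem argument --- legitimate since $\mathcal{R}$ omits wheels and whirls --- that preserves $3$-connectivity and a fixed $M\s(K_{3,3}''')$-minor under a single-element deletion, with a connectivity-and-counting analysis of the non-separating cocircuits through $a$, $b$ and $c$. The matroids of $\mathcal{R}$ of corank at most $9$ that withstand every admissible reduction form the finite list whose checking Conjecture \ref{todo2} leaves to the computer.
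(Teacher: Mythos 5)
Your easy direction and the reduction (via Theorem \ref{principal}) to $3$-connected regular matroids with an $M\s(K_{3,3}''')$-minor and no $M\s(K_5)$-minor, followed by induction on corank with base case Conjecture \ref{todo2}, is the right frame. But the inductive step rests entirely on a ``reduction lemma'' that you state and do not prove: the existence of a single element $g$ with $M\del g$ $3$-connected, still in the class, and with the \emph{same} coindependent triple $\{a,b,c\}$ still inside $\y(M\del g)$. This is a genuine gap, and it is a much stronger demand than anything the paper establishes. Tracking fixed elements of $\y$ through one deletion is exactly the difficulty the paper's machinery is built to avoid: membership in $Y$ can be destroyed when non-separating cocircuits avoiding $a$ become separating or when new ones appear after deleting $g$, and no splitter-type argument in the paper (or in your sketch) controls this for a prescribed triple. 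Note also that the paper's tools work with cosimplifications $co(M\del e)$ and need the corank gap $r\s(M)-r\s(N)\ge 5$ together with either a $4$-coindependent set of vertically removable elements or an $N$-pyramid (Theorems \ref{whittle} and \ref{costalonga}); that a single well-chosen deletion suffices is precisely what is not known.

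The paper's proof of Theorem \ref{todo-teo} is short because it inverts the quantifiers: instead of preserving chosen elements of $\y$, it applies Lemma \ref{engine} with $l=2$ to a minimal counterexample $M$ (which has $r\s(M)\ge 10$, so $r\s(M)-r\s(M\s(K_{3,3}'''))\ge 5$), concluding that either some vertically removable $e$ has $r\s_{co(M\del e)}(\y(co(M\del e)))\ge 3$, or a pyramid deletion $K$ has $r\s_K(\y(K))\ge 3$; either way a smaller member of the class violates the bound, contradicting minimality. The content behind this is the lifting inclusions $\y(M)\cont cl\s_M(\y(co(M\del e))\u e)$ (Corollary \ref{ylift1}), the pyramid analysis of Lemma \ref{ylift2}(d), and the submodularity argument of Lemma \ref{almost-engine}: if \emph{all} the relevant smaller matroids had $\y$ of corank at most $2$, then $\y(M)$ would be trapped in an intersection of low-corank sets and would itself have corank at most $2$. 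To repair your proposal you should replace your reduction lemma by an appeal to this contrapositive mechanism (or prove analogues of Corollary \ref{ylift1}, Lemma \ref{ylift2} and Lemma \ref{almost-engine} yourself); as written, the core of the converse is missing.
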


Conjecture \ref{todo} yields the following generalization of Lemos' graphicness criterion:

\begin{conjecture}
Let $M$ be a $3$-connected binary matroid and $S\cont E(M)$ satisfy $r\s_M(S)\ge3$. Then the following assertions are equivalent:
\begin{enumerate}
\item [(a)] $M$ is graphic;
\item [(b)] Each element of $S$ avoids at most $r\s(M)-1$ non-separating cocircuits of $M$; and
\item [(c)] Each element of $S$ avoids no linearly dependent set of non-separating cocircuits of $M$.
\end{enumerate}
\end{conjecture}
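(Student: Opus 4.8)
The plan is to reduce the three-way equivalence to Conjecture \ref{todo}, which provides the only substantive input; the remaining implications are formal consequences of Theorem \ref{lemos} and of the monotonicity of the corank function $r\s_M$.

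The first step is to rephrase (b) and (c) in terms of $Y(M)$. By the definition of $Y(M)$, an element $e$ avoids at most $r\s(M)-1$ non-separating cocircuits exactly when $e\notin Y(M)$, so (b) says precisely $S\cont\y(M)$. For (c), the alternative description of $Y(M)$ recorded after Theorem \ref{lemos} --- namely that $Y(M)$ is the set of elements avoiding all members of some linearly dependent family of non-separating cocircuits --- shows that $e$ avoids no linearly dependent set of non-separating cocircuits if and only if $e\notin Y(M)$; hence (c) also says $S\cont\y(M)$. The equivalence of these two descriptions of $Y(M)$ is exactly the ``moreover'' clause of Theorem \ref{lemos}(a): the set of non-separating cocircuits avoiding $e$ is linearly dependent iff $e$ avoids more than $r\s(M)-1$ of them. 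Thus (b)$\Leftrightarrow$(c) requires no hypothesis on $r\s_M(S)$.

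It then remains to settle (a)$\Leftrightarrow$(b). The implication (a)$\Rightarrow$(b) is immediate: if $M$ is graphic then $Y(M)=\emp$ by Theorem \ref{lemos}(b), so $\y(M)=E(M)\supseteq S$ and (b) holds; this uses neither $r\s_M(S)\ge3$ nor Conjecture \ref{todo}. For (b)$\Rightarrow$(a), assume (b), i.e.\ $S\cont\y(M)$, and suppose for contradiction that $M$ is non-graphic. Because $M$ is $3$-connected and $r\s(M)\ge r\s_M(S)\ge3$ guarantees $|E(M)|\ge4$, both Theorem \ref{lemos} and Conjecture \ref{todo} apply; the latter gives $r\s_M(\y(M))\le2$. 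Since $r\s_M$ is the rank function of $M\s$, it is monotone, whence $r\s_M(S)\le r\s_M(\y(M))\le2$, contradicting $r\s_M(S)\ge3$. Therefore $M$ is graphic.

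I expect no genuine obstacle internal to this statement: its entire content is Conjecture \ref{todo}. Indeed the argument can be run in reverse to show the two are equivalent --- taking $S=\y(M)$, which satisfies (b) tautologically by the definition of $\y(M)$, the case $r\s_M(\y(M))\ge3$ for a non-graphic $M$ would violate (b)$\Rightarrow$(a), so the present statement forces $r\s_M(\y(M))\le2$, which is Conjecture \ref{todo}. The hypothesis $r\s_M(S)\ge3$ is thus exactly the threshold needed to contradict the bound $r\s_M(\y(M))\le2$, and the only point requiring care is the elementwise passage from (c) to membership in $Y(M)$, which must invoke the ``moreover'' equivalence of Theorem \ref{lemos}(a) rather than its inequality alone.
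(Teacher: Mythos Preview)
Your derivation is correct and matches the paper's intent: the paper does not prove this conjecture at all, but simply asserts that ``Conjecture \ref{todo} yields the following generalization of Lemos' graphicness criterion,'' and you have supplied exactly the routine details behind that assertion --- reformulating (b) and (c) as $S\subseteq\y(M)$ via Theorem \ref{lemos}(a), obtaining (a)$\Rightarrow$(b) from Theorem \ref{lemos}(b), and using Conjecture \ref{todo} together with monotonicity of $r\s_M$ for (b)$\Rightarrow$(a). Your additional observation that the implication reverses (taking $S=\y(M)$) is a nice bonus not stated in the paper.
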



\section{Some results in critical 3-connectivity}


Let $M$ and $N$ be $3$-connected matroids. We say that an element $e\in E(M)$ is {\it vertically $N$-removable} in $M$ if $co(M\del e)$ is a $3$-connected matroid with an $N$-minor.

We can summarize the dual versions of Lemma 3.4 and Theorem 3.1 of \cite{Whittle} and Theorem 1.3 of \cite{Costalonga2} in the following theorem:
\begin{theorem}\label{whittle}
Let $M$ and $N$ be $3$-connected matroids. For $k\in\{1,2,3\}$, if $M$ has an $N$-minor and $r\s(M)-r\s(N)\ge k$, then there is a $k$-coindependent set of $M$ whose elements are vertically $N$-removable in $M$.
\end{theorem}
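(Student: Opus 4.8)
The plan is to derive the statement by applying matroid duality to three results already present in the literature, one for each value of $k\in\{1,2,3\}$. First I would fix the duality dictionary that converts the dual-sided hypotheses and conclusion of the theorem into the primal-sided statements in which the cited results are phrased. Recall that a set $X\cont E(M)$ is coindependent in $M$ precisely when $X$ is independent in $M\s$, so a $k$-coindependent set of $M$ is the same object as a $k$-independent set of $M\s$; likewise $r\s(M)-r\s(N)=r(M\s)-r(N\s)$, so the corank-gap hypothesis becomes a rank-gap hypothesis for $M\s$. It remains to translate vertical $N$-removability. By definition $e$ is vertically $N$-removable in $M$ when $co(M\del e)$ is $3$-connected and has an $N$-minor. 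Using the identity $(co(M\del e))\s=si(M\s/e)$ together with the facts that $3$-connectivity is self-dual and that an $N$-minor of $M$ corresponds to an $N\s$-minor of $M\s$, this condition says exactly that $si(M\s/e)$ is $3$-connected with an $N\s$-minor; that is, $e$ is a vertically contractible element of $M\s$ keeping an $N\s$-minor, in the sense of the cited papers.

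With this dictionary in place, Theorem \ref{whittle} is equivalent to the following primal assertion applied to the pair $(M\s,N\s)$: if $M\s$ is $3$-connected with an $N\s$-minor and $r(M\s)-r(N\s)\ge k$, then $M\s$ has a $k$-independent set each of whose elements $e$ satisfies that $si(M\s/e)$ is $3$-connected with an $N\s$-minor. For $k=1$ and $k=2$ this is supplied by the two results of \cite{Whittle} (Lemma 3.4 and Theorem 3.1, in the natural correspondence), and for $k=3$ by Theorem 1.3 of \cite{Costalonga2}. Thus the proof consists, case by case, of checking that the hypotheses line up (three-connectivity of both matroids, the presence of the minor, and the exact rank-gap threshold $r(M\s)-r(N\s)\ge k$), invoking the matching result to obtain a $k$-independent set of vertically contractible elements of $M\s$, and then dualizing the entire conclusion back to $M$ to produce the required $k$-coindependent set of vertically $N$-removable elements.

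The main obstacle is not mathematical depth but the bookkeeping needed to guarantee that the three cited statements are genuinely the duals of a single uniform assertion, and two points require care. First, one must confirm that each cited result delivers an \emph{independent} set of the asserted size and that the removability it guarantees is exactly the cosimplification-$3$-connectivity-plus-minor condition, rather than a weaker ``essential element'' or single-step property; any discrepancy in the flavour of connectivity (for instance $3$-connectivity versus $3$-connectivity only after simplification) must be reconciled before dualizing. Second, the three cases are established by genuinely different arguments in the sources, the $k=3$ case of \cite{Costalonga2} being by far the deepest, so the unification is purely a statement-level synthesis: there is no common short proof, and the function of Theorem \ref{whittle} here is simply to record the three results in the single dual form that is convenient for the applications developed later in the paper.
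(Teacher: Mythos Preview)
Your proposal is correct and matches the paper's treatment exactly: the paper does not prove this theorem but simply presents it as the dual-form summary of Lemma 3.4 and Theorem 3.1 of \cite{Whittle} (for $k=1,2$) and Theorem 1.3 of \cite{Costalonga2} (for $k=3$). The duality dictionary you spell out is precisely the justification the paper leaves implicit.
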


If $M$ and $N$ are $3$-connected matroids, we say that a set $X\cont E(M)$ is {\it $N$-removable} if $M\del X$ is a $3$-connected matroid with an $N$-minor. We also say that an element $e\in E(M)$ is {\it $N$-removable} if $\{e\}$ is $N$-removable. Now, we define a special structure that will be largely used along this article. When $M$ is binary, we say that a list of distinct elements $e_1,e_2,e_3,f_1,f_2,f_3$ of $M$ is an {\it$N$-pyramid} with {\it top} $T\s:=\{e_1,e_2,e_3\}$ and {\it base} $T:=\{f_1,f_2,f_3\}$ provided:  
\begin{enumerate}
\item [(a)] $T\s$ is an $N$-removable triad of $M$,
\item [(b)] $T$ is a triangle of $M$,
\item [(c)] for $\{i,j,k\}=\{1,2,3\}$, $\{e_i,e_j,f_k\}$ is a triangle of $M$, and
\item [(d)] the elements $f_1$, $f_2$ and $f_3$ are $N$-removable in $M$. 
\end{enumerate}
In other words, such a list is an $N$-pyramid with top $T\s$ when the restriction of $M$ to its elements is isomorphic $M(K_4)$ as illustrated in the figure below, where $\{e_1,e_2,e_3\}$ is an $N$-removable triad of $M$ and $f_1$, $f_2$ and $f_3$ are also $N$-removable in $M$.
\begin{center}
\begin{picture}(64,64)
\put(32,32){\circle{72}}
\put(32,32){\circle*{3}}
\put(32,52){\circle*{3}}
\put(15,22){\circle*{3}}
\put(49.3,22){\circle*{3}}
\drawline[1](49.3,22)(32,32)(15,22)
\drawline[1](32,32)(32,52)
\put(24,40){$e_1$}
\put(37,29){$e_2$}
\put(22,22){$e_3$}
\put(30,5){$f_1$}
\put(5,40){$f_2$}
\put(49,40){$f_3$}
\end{picture}
\end{center}

Theorem \ref{whittle} does not hold for $k>3$, but, at the binary case, it can be extended by the following theorem, that is the dual version of Theorem 1.4, from \cite{Costalonga2}, written in terms of $N$-pyramids.

\begin{theorem}\label{costalonga}
Let $M$ be a $3$-connected binary matroid with a $3$-connected minor $N$ such that $r\s(M)-r\s(N)\ge 5$. Then
\begin{enumerate}
\item [(a)] $M$ has a $4$-coindependent set whose elements are vertically $N$-removable, or
\item [(b)] $M$ has an $N$-pyramid.
\end{enumerate}
\end{theorem}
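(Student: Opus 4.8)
The plan is to deduce the statement from its primal counterpart, Theorem 1.4 of \cite{Costalonga2}, via matroid duality, so that the real work is to set up the correct dictionary and to verify that the structural obstruction of the primal theorem translates exactly into an $N$-pyramid. Passing to $M\s$ and $N\s$, a $k$-coindependent subset of $M$ becomes a $k$-independent subset of $M\s$, and the corank bookkeeping is immediate: since $r(M\s)=r\s(M)$ and $r(N\s)=r\s(N)$, the hypothesis $r\s(M)-r\s(N)\ge 5$ becomes $r(M\s)-r(N\s)\ge 5$. The notion of vertical removability dualizes just as cleanly: because $(M\del e)\s=M\s/e$, and $co(M\del e)$ is $3$-connected precisely when its dual $si(M\s/e)$ is, while minors are preserved under duality up to dualizing the minor, an element $e$ is vertically $N$-removable in $M$ exactly when $si(M\s/e)$ is $3$-connected with an $N\s$-minor, i.e. exactly when $e$ is vertically $N\s$-contractible in $M\s$. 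Thus conclusion (a) is literally the dual of the assertion ``$M\s$ has a $4$-independent set of vertically $N\s$-contractible elements.''

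First I would record this duality dictionary precisely and use it to reduce Theorem \ref{costalonga} to the single claim that the obstruction appearing in Theorem 1.4 of \cite{Costalonga2} is the dual of an $N$-pyramid. The controlling structural fact is that the six elements of an $N$-pyramid span an $M(K_4)$-restriction, and $M(K_4)$ is self-dual. Under duality the triad $T\s=\{e_1,e_2,e_3\}$ becomes a triangle of $M\s$ and the triangle $T=\{f_1,f_2,f_3\}$ becomes a triad, while the three auxiliary triangles $\{e_i,e_j,f_k\}$ become the corresponding auxiliary triads; the net effect is simply to exchange the roles of triangles and triads inside the self-dual $M(K_4)$. Consequently the dual of an $N$-pyramid is again an $M(K_4)$-configuration, now carrying a contractible triangle in the role of the top and a triad in the role of the base, with the three base elements $N\s$-contractible, which is exactly the obstruction of the primal theorem.

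The one point demanding care, and the step I expect to be the main obstacle, is matching the precise removability hypotheses across the two descriptions rather than merely the combinatorial $M(K_4)$-shape. I would check clause by clause that ``$T\s$ is an $N$-removable triad of $M$'' dualizes to ``the corresponding triangle is $N\s$-contractible in $M\s$'' — using $(M\del T\s)\s=M\s/T\s$ together with the invariance of $3$-connectivity and of minor-containment under duality — and likewise that each $f_k$ being $N$-removable in $M$ corresponds to its being $N\s$-contractible in $M\s$. Once every clause of the $N$-pyramid definition is identified with the exact dual of a clause of the obstruction of Theorem 1.4 of \cite{Costalonga2}, applying that theorem to $M\s$ with minor $N\s$ and dualizing its conclusion produces either the required $4$-coindependent set of vertically $N$-removable elements or an $N$-pyramid. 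A self-contained route is possible in principle, starting from the vertically $N$-removable $3$-coindependent set supplied by Theorem \ref{whittle} and attempting to adjoin a fourth element, with the pyramid emerging as the obstruction to that extension; but this merely reconstructs the argument of \cite{Costalonga2}, so the duality reduction is the economical path.
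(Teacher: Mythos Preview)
Your approach is correct and matches the paper's own treatment: the paper does not prove Theorem \ref{costalonga} directly but simply states that it is the dual version of Theorem 1.4 of \cite{Costalonga2}, rewritten in terms of $N$-pyramids. Your proposal spells out precisely the duality dictionary that justifies this identification, which is exactly what is needed.
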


\section{Lifting non-separating cocircuits from minors}

We define \newcommand{\R}{\mathcal{R}} $\R\s_A(M)$ as the set of non-separating cocircuits of $M$ avoiding $A$. We may write $R\s_e(M)$ instead of $R\s_{\{e\}}(M)$. We write $dep_A(M):=|\R\s_A(M)|-dim(\R\s_A(M))$, where $dim(\R\s_A(M))$ is the dimension of the space spanned by $\R\s_A(M)$ in the cocircuit space of $M$. We simplify the notation $dep_{\{e\}}(M)$ by $dep_e(M)$.

Observe that an element $e$ of a $3$-connected binary matroid $M$ is in $Y(M)$ if and only if $dep_e(M)>0$. The next result is Lemma 3.1 from \cite{Lemos2009}.

\begin{lemma}\label{lemmalemos}
Suppose that $e$ is an element of a $3$-connected binary matroid $M$ such that the cosimplification 
of $M\del e$ is $3$-connected. If $r\s ( M )\ge 4$, then it is possible to choose the ground set of $co( M \del e )$ so that, for each
$A \cont E (co( M \del e ))$,
\begin{equation}\label{eqlemos} dep_A ( M )\ge dep_{ A'} ( M ) \ge dep _A (co( M \del e )),\end{equation}
where $A'$ is the minimal subset of $E ( M )$ satisfying $A \cont A'$ and, for each triad $T\s$ of $M$ that meets both $e$ and $A$, $T\s − e \cont A'$.
\end{lemma}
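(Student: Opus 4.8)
The plan is to derive both inequalities from two facts: that $dep$ is a matroid nullity, hence monotone under inclusion of cocircuit families, and that cosimplification induces a linear isomorphism of cocycle spaces carrying non-separating cocircuits of $co(M\del e)$ to non-separating cocircuits of $M$. Write $\mathcal{R}\s_A(\cdot)$ for the relevant families, and recall $dep_A(\cdot)=|\mathcal{R}\s_A|-\dim\,\mathrm{span}(\mathcal{R}\s_A)$, the nullity of $\mathcal{R}\s_A$ viewed as a subset of the binary cocycle space. Since nullity is monotone (adding one vector changes it by $0$ or $1$), any inclusion of families yields an inequality of $dep$'s. The first inequality $dep_A(M)\ge dep_{A'}(M)$ is then immediate: as $A\cont A'$, every non-separating cocircuit of $M$ avoiding $A'$ also avoids $A$, so $\mathcal{R}\s_{A'}(M)\cont\mathcal{R}\s_A(M)$ and monotonicity gives the claim.

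For the second inequality I would build the lift. Let $N:=co(M\del e)=M\del e/S$, where $S$ is a transversal of the non-trivial series classes of $M\del e$; the freedom in choosing $S$ is exactly the freedom in the statement. Deletion of $e$ projects the cocycle space of $M$ onto the coordinates $E(M)-e$, and contraction of $S$ restricts to cocycles avoiding $S$; so, writing $W$ for the set of cocycles of $M$ avoiding $S$, the coordinate projection $\pi\colon W\to\mathcal{C}\s(N)$ is surjective, and it is injective because $M$ has no coloop, so its only cocycle supported on $\{e\}$ is $0$. Thus $\pi$ is an isomorphism; let $\phi=\pi^{-1}$. For a cocircuit $D\s$ of $N$ one has $\phi(D\s)\in\{D\s,\,D\s\u\{e\}\}$, and a minimality argument (any proper splitting of $\phi(D\s)$ into cocircuits of $M$ projects to a proper splitting of $D\s$, impossible unless a part equals $\{e\}$, i.e. $e$ is a coloop) shows $\phi(D\s)$ is a single cocircuit of $M$. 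Being a linear isomorphism, $\phi$ preserves cardinalities and span-dimensions, hence $dep$.

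It then suffices to prove the containment $\phi(\mathcal{R}\s_A(N))\cont\mathcal{R}\s_{A'}(M)$; granting it, monotonicity gives $dep_A(N)=\mathrm{null}(\phi(\mathcal{R}\s_A(N)))\le\mathrm{null}(\mathcal{R}\s_{A'}(M))=dep_{A'}(M)$. Two things must be checked for each non-separating cocircuit $D\s$ of $N$ avoiding $A$. First, that $\phi(D\s)$ avoids $A'$: since $e\notin A'$, only the added elements $T\s-e$ matter, and if $T\s=\{e,x,y\}$ is a triad with $x\in A\cont E(N)$, then $\{x,y\}$ is a series pair of $M\del e$, so $x,y$ lie in one series class; as $x\in E(N)$ is its chosen representative, $y\in S$, whence $\phi(D\s)$—which avoids $S$ and $A$—avoids both $x$ and $y$. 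This step works for every admissible $S$. Second, that $\phi(D\s)$ is non-separating in $M$, where $N\del D\s$ equals $(M\del\phi(D\s))/S$ or $(M\del\phi(D\s))\del e/S$ according as $e\in\phi(D\s)$ or not. When $e$ lies in the lift, passing to $M\s$—where cosimplification becomes simplification and $S$ becomes a set of elements parallel into the contraction—reduces the task to re-adding loops and parallel elements to a connected matroid, which preserves connectivity.

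The main obstacle is the remaining case $\phi(D\s)=D\s$ (so $e\notin\phi(D\s)$): one must lift connectivity of $N\del D\s=(M\del D\s)\del e/S$ back to $M\del D\s$, that is, un-contract $S$ and un-delete $e$ without breaking connectivity. This is where the choice of $S$ is genuinely used: the representatives should be chosen so that, after deleting $D\s$, the contracted elements remain in series and $e$ is neither a loop nor a coloop of the relevant minor, so that series-contraction and the re-addition of $e$ preserve connectivity. I would establish this from orthogonality between $D\s$ and the triads through $e$, together with the hypothesis that $co(M\del e)$ is $3$-connected, which rigidly constrains the series classes of $M\del e$ and lets one pick, within each class, a representative compatible with all cocircuits to be lifted. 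Verifying that a single global choice of $S$ works simultaneously for every $A$ and every non-separating $D\s$ is the delicate part; once the lift is shown to preserve non-separation, the rest is routine given the isomorphism $\phi$ and nullity monotonicity.
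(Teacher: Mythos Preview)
The paper does not prove this lemma; it is quoted verbatim as Lemma~3.1 of Lemos~\cite{Lemos2009}, so there is no in-paper argument to compare against. Your overall architecture---nullity monotonicity for the first inequality, and a linear isomorphism $\phi$ between the cocycle space of $N:=co(M\del e)$ and the subspace of cocycles of $M$ avoiding $S$ for the second---is the right one and is essentially how the cited proof proceeds. Your derivation of $dep_A(M)\ge dep_{A'}(M)$ is correct, your construction of $\phi$ is correct, and your verification that $\phi(D\s)$ avoids $A'$ is correct and, as you note, insensitive to the choice of~$S$.

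The genuine gap is exactly where you locate it but also in a place where you claim to be done. In the case $e\in\phi(D\s)$ you assert that recovering $M\del\phi(D\s)$ from the connected matroid $N\del D\s=(M\del\phi(D\s))/S$ amounts, dually, to ``re-adding loops and parallel elements to a connected matroid, which preserves connectivity.'' Adding parallel elements preserves connectivity; adding loops does not (a loop is its own component). Concretely, if the series partner $x_s\in E(N)$ of some $s\in S$ lies in $D\s$, then $\{s,x_s\}$ is a $2$-cocircuit of $M\del e$ and $s$ becomes a coloop of $M\del e\del D\s=M\del\phi(D\s)$, so $\phi(D\s)$ is separating. Nothing you have written rules this out, and no orthogonality is available (both $\{s,x_s\}$ and $D\s$ are cocircuits). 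In the case $e\notin\phi(D\s)$ you yourself concede the argument is incomplete. Thus the crux of the lemma---that some global choice of the ground set of $co(M\del e)$ makes the lift $\phi$ send every non-separating cocircuit of $N$ to a non-separating cocircuit of $M$---is not established in your proposal; you have correctly reduced the lemma to this statement but not proved it. This is precisely the nontrivial content supplied in~\cite{Lemos2009}.
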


Next we establish:

\begin{lemma}\label{ylift}
Suppose that $M$ is a $3$-connected binary matroid satisfying $r\s(M)\ge4$. If $e$ is an element of $M$ such that $co(M\del e)$ is $3$-connected, then we may choose the ground set of $co(M\del e)$ so that:
\begin{equation}\label{yeq}
\begin{array}{l}
Y(co(M\del e))\cont Y(M)\text{; and}\\
\{f\in E(M):\text{ there is }g \in Y(co(M\del e))\text{ with }\{f,g\}\in\CC\s(M\del e)\}\cont Y(M).
\end{array}
\end{equation}
\end{lemma}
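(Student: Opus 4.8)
The plan is to deduce Lemma \ref{ylift} from Lemma \ref{lemmalemos} by a careful bookkeeping of which elements acquire positive codependency. Recall that $f \in Y(M)$ if and only if $dep_f(M) > 0$, so the two containments in \eqref{yeq} are both assertions of the form ``$dep_f(M) > 0$'' for appropriate elements $f$, and Lemma \ref{lemmalemos} gives us exactly the inequality $dep_A(M) \ge dep_A(co(M\del e))$ (after the correct choice of ground set for $co(M\del e)$), with a twist involving the passage from $A$ to the slightly larger set $A'$. I would fix once and for all the ground set of $N := co(M\del e)$ provided by Lemma \ref{lemmalemos}, so that \eqref{eqlemos} holds simultaneously for all $A \cont E(N)$.

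For the first containment, let $g \in Y(N)$, i.e. $dep_g(N) > 0$. Apply \eqref{eqlemos} with $A = \{g\}$: we get $dep_g(M) \ge dep_{\{g\}'}(M) \ge dep_g(N) > 0$, where $\{g\}'$ is the minimal subset of $E(M)$ containing $g$ such that $T\s - e \cont \{g\}'$ for every triad $T\s$ of $M$ meeting both $e$ and $g$. The middle term $dep_{\{g\}'}(M)$ being positive already says that some element of $\{g\}'$ lies in $Y(M)$; I need specifically that $g$ itself does. Here is where one must be slightly careful: $dep_g(M) \ge dep_{\{g\}'}(M)$ because $\R\s_g(M) \supseteq \R\s_{\{g\}'}(M)$ (avoiding a larger set is a stronger restriction), and enlarging a family can only increase the gap ``$|\text{family}| - \dim$'' — actually this monotonicity is precisely the left inequality in \eqref{eqlemos}, so $dep_g(M) \ge dep_g(N) > 0$ directly, giving $g \in Y(M)$. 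Thus $Y(N) \cont Y(M)$.

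For the second containment, let $f \in E(M)$ and suppose there is $g \in Y(N)$ with $\{f,g\} \in \CC\s(M\del e)$, i.e. $\{f,g\}$ is a cocircuit (a series pair) of $M\del e$. Since $g$ survives in the cosimplification $N = co(M\del e)$ but $f$ is in series with $g$ in $M \del e$, the element $f$ is one of the elements deleted in forming the cosimplification and it is ``represented by'' $g$; equivalently, the cocircuits of $N$ avoiding $g$ are, under the chosen identification of ground sets, cocircuits of $M$ avoiding $\{f,g\}$, and more to the point the triad structure linking $f$ to $e$ is what the set-enlargement $A \mapsto A'$ in Lemma \ref{lemmalemos} is designed to capture. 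Concretely, $\{f,g\}$ a series pair in $M\del e$ means $\{e,f,g\}$ contains, or is contained in, a triad-type configuration of $M$; I would argue that $\{g\}' = \{f, g\}$ (or at least $f \in \{g\}'$) for the element $g$, using that $\{f,g,e\}$ being a triad of $M$ — which is the generic reason a series pair of $M\del e$ arises — forces $f = T\s - e$ portion into $\{g\}'$. Then $dep_f(M)$: I claim $\R\s_f(M) \supseteq \R\s_{\{f,g\}}(M) = \R\s_{\{g\}'}(M) \supseteq \R\s_g(N)$ under the identification, whence $dep_f(M) \ge dep_{\{g\}'}(M) \ge dep_g(N) > 0$, so $f \in Y(M)$.

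The main obstacle, and the step deserving the most care, is the precise matching in the second containment between series pairs $\{f,g\} \in \CC\s(M \del e)$ and the triad-driven set enlargement $A \mapsto A'$ of Lemma \ref{lemmalemos} — in particular verifying that for such a pair one indeed has $f \in \{g\}'$, which may require examining whether $\{e,f,g\}$ is itself a triad of $M$ or whether $f$ enters $\{g\}'$ through a chain of triads, and then confirming $dep_{\{f,g\}}(M)$ is computed using the \emph{same} ground-set identification that makes \eqref{eqlemos} hold. A subtlety worth flagging is that $E(N)$ might not literally contain $g$ with the ``obvious'' labelling: Lemma \ref{lemmalemos} allows us to \emph{choose} the ground set of $co(M\del e)$, and one must check that a single such choice serves both $g$ (for the first containment) and all relevant pairs $\{f,g\}$ (for the second) — I expect this is immediate because Lemma \ref{lemmalemos}'s choice is global, but it should be stated. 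Modulo these identifications the argument is a direct chase through \eqref{eqlemos}, and I would present it as such, invoking $dep_e(M) > 0 \Leftrightarrow e \in Y(M)$ at the start and end.
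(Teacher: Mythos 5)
Your proposal is correct and follows essentially the same route as the paper: fix the ground set of $co(M\del e)$ given by Lemma \ref{lemmalemos}, get the first containment directly from $dep_g(M)\ge dep_g(co(M\del e))>0$, and get the second from the triad $\{e,f,g\}$ forcing $f$ into the enlarged set, so that $\R\s_f(M)$ contains a linearly dependent family. The only point you left hedged — that a $2$-cocircuit $\{f,g\}$ of $M\del e$ must come from a triad $\{e,f,g\}$ of $M$ — is immediate, since $r\s(M)\ge4$ and $3$-connectivity make $M$ cosimple, so no chain-of-triads analysis is needed.
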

\begin{proof}
Choose the ground set of $co(M\del e)$ satisfying condition \eqref{eqlemos} of Lemma \ref{lemmalemos}. Recall that $f\in Y(M)$ if and only if $dep_f(M)\ge1$. If $f\in Y(co(M\del e))$, then, by \eqref{eqlemos}, $dep_f(M)\ge dep_f(co(M\del e))\ge1$ and, therefore $f\in Y(M)$. Now, suppose that $\{e,f,g\}$ is a triad of $M$ and that $g$ in $Y(co(M\del e))$. By \eqref{eqlemos}, $dep_{\{f,g\}}(M)\geq dep_g(M)>0$. Thus $\R\s_{\{f,g\}}(M)$ is linearly dependent and, therefore, so is $\R\s_f(M)$, since it contains $\R\s_{\{f,g\}}(M)$. Thus $f\in Y(M)$.
\end{proof}

\begin{corolary}\label{ylift1}
Suppose that $M$ is $3$-connected binary matroid satisfying $r\s(M)\ge4$. If $e$ is an element of $M$ such that $co(M\del e)$ is $3$-connected, then we may choose the ground set of $co(M\del e)$ so that: 
\begin{equation}\label{yeq2}
\y(M)\cont cl\s_M( \y(co[M\del e])\u e ).                                                                                                                                                            \end{equation}
\end{corolary}

\begin{lemma}\label{ylift2}
Let $M$ be a $3$-connected binary matroid with an $N$-minor, satisfying $r\s(M)\ge4$. Suppose that $e_1,e_2,e_3,f_1,f_2,f_3$ is an $N$-pyramid of $M$, having top $T\s$ and base $T$. Denote $F:=T\u T\s$.
\begin{enumerate}
\item [(a)] If $D\s\in \R\s(M)$, then either $D\s=T\s$, $D\s\i F=\emp$, or $D\s\i F=\{e_i,f_j,f_k\}$, for some distinct elements $i,j,k\in\{1,2,3\}$.
\item [(b)] If $C\s\in \R\s(M\del T\s)$, then there is a unique $D\s\in\R\s(M)$ such that $C\s\cont D\s\subsetneq D\s\u T\s$. Moreover, either
\begin{enumerate}
 \item [(b1)] $C\s\i T=\emp$ and $D\s=C\s$, or
 \item [(b2)] there are distinct elements $i,j,k$ of $\{1,2,3\}$ such that $C\s\i F=\{f_i,f_j\}$ and $D\s=C\s\u e_k$.
\end{enumerate}
\item[(c)] $Y(M\del T\s)\cont Y(M)$. Moreover, if $f_i\in Y(M\del T\s)$, then $T_i\cont Y(M\del T\s)$ and $f\notin cl\s_M( \y(M))$.  
\item[(d)] If $r\s_{M\del T\s}(\y(M\del T\s))\le 2$ and, for each $i=\{1,2,3\}$, $r\s_{M\del f_i}(\y(M\del f_i))\le 2$, then $r\s_M(\y(M))\le 2$.  
\end{enumerate}
\end{lemma}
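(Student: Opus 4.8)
The four parts are used in sequence: (a) and (b) describe how cocircuits of $M$ meet $F$, and (c), (d) are the inductive content. For (a), if $D\s$ is a cocircuit of $M$, orthogonality of $D\s$ with the four triangles $T$, $\{e_1,e_2,f_3\}$, $\{e_1,e_3,f_2\}$, $\{e_2,e_3,f_1\}$ of $M|F$ forces the $GF(2)$ indicator vector of $D\s\i F$ to solve a homogeneous system of rank $3$, whose eight solutions are exactly $\emp$, $T\s$, the three sets $\{e_i,f_j,f_k\}$, and the three sets $\{e_i,e_j,f_i,f_j\}$ (with $\{i,j,k\}=\{1,2,3\}$). If $D\s\i F=T\s$ then $T\s\cont D\s$, so $D\s=T\s$ by minimality of the cocircuit $T\s$. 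To rule out $D\s\i F=\{e_i,e_j,f_i,f_j\}$ for non-separating $D\s$, I would use that $T\s=\{e_i,e_j,e_k\}$ is a cocircuit: every circuit of $M$ through $e_k$ meets $T\s$ in an even set, hence contains $e_i$ or $e_j$, hence meets $D\s$; so $e_k$ is a coloop of $M\del D\s$, which is therefore disconnected, a contradiction.

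For (b), write $C\s=D\s\del T\s$ for a suitably chosen cocircuit $D\s$ of $M$; part (a) leaves only $D\s\i F=\emp$ (so $D\s=C\s$ and $C\s\i T=\emp$: case (b1)) or $D\s\i F=\{e_k,f_i,f_j\}$ (so $C\s\i F=\{f_i,f_j\}$ and $D\s=C\s\u e_k$: case (b2)). The real work is showing $D\s$ is \emph{non-separating in $M$}, not merely in $M\del T\s$: in both cases $D\s\u T\s=C\s\u T\s$, so $(M\del D\s)\del(T\s\del D\s)=(M\del T\s)\del C\s$ is connected, and each element of $T\s\del D\s$ lies, together with an element $f_\ell\notin D\s$, in one of the triangles $\{e_a,e_b,f_c\}$ of $M\del D\s$; since $f_\ell$ belongs to the connected matroid $(M\del T\s)\del C\s$ and a circuit is a connected set, adjoining the elements of $T\s\del D\s$ via these triangles keeps $M\del D\s$ connected. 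Uniqueness holds because two such cocircuits would differ by a subset of $T\s$, hence by $\emp$ or $T\s$ (minimality of $T\s$), and the value $T\s$ is excluded by (a) since it would produce an intersection of type $\{e_i,e_j,f_i,f_j\}$ with $F$.

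For (c), given $g\in Y(M\del T\s)$ and a minimal linear dependence among members of $\R\s_g(M\del T\s)$, replace each cocircuit by its (distinct, $g$-avoiding) lift from (b); the resulting family has symmetric difference contained in $T\s$, hence equal to $\emp$ or $T\s$, and in the second case adjoin the non-separating cocircuit $T\s$, which also avoids $g$. Either way $\R\s_g(M)$ is dependent, so $g\in Y(M)$; thus $Y(M\del T\s)\cont Y(M)$. For the refinement, if $f_i\in Y(M\del T\s)$ then $T\cont Y(M\del T\s)$ (using that $Y$ of a $3$-connected binary matroid contains every triangle it meets — an application of Lemma \ref{lemmalemos} to deleting one element of the triangle), so $T\cont Y(M)$; and by (a) any cocircuit of $M$ containing $f_i$ also contains some $f_c\in T\setminus f_i\cont Y(M)$, hence is not contained in $\y(M)\u f_i$, giving $f_i\notin cl\s_M(\y(M))$.

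For (d), each $f_i$ is $N$-removable, so $M\del f_i$ is $3$-connected with at least four elements and equals $co(M\del f_i)$; Corollary \ref{ylift1} gives $\y(M)\cont cl\s_M(\y(M\del f_i)\u f_i)$, a coflat of corank $r\s_{M\del f_i}(\y(M\del f_i))+1\le 3$. The plan is to prove $F\i\y(M)=\emp$ (every pyramid element lies in $Y(M)$); granting this, part (a) shows no element of $T\s$ lies in $cl\s_M(\y(M))$ — a cocircuit through $e_i$ is $T\s$ or some $\{e_i,f_j,f_k\}$, and neither can lie inside $\y(M)\u e_i$ because $e_j,e_k,f_j,f_k\in Y(M)$ — so adjoining $T\s$ raises the corank of $\y(M)$ by $2$, and with $\y(M)\cont\y(M\del T\s)$ (from (c)) and the $T\s$-analogue of Corollary \ref{ylift1} one gets
\[ r\s_M(\y(M))=r\s_M(\y(M)\u T\s)-2=r\s_{M\del T\s}(\y(M))\le r\s_{M\del T\s}(\y(M\del T\s))\le 2. \]
To establish $F\i\y(M)=\emp$: when some $f_i\in Y(M\del T\s)$, part (c) gives $T\cont Y(M)$; otherwise $T\cont\y(M\del T\s)$, and since $T$ is a triangle of $M\del T\s$ with $r\s_{M\del T\s}(\y(M\del T\s))\le2$, one shows $T$ is forced to be a triad of $M\del T\s$ and deduces $T\cont Y(M)$ from this degeneracy; finally $T\s\cont Y(M)$ follows from the $N$-removability of $T\s$ together with Lemma \ref{lemmalemos} applied along the triangles $\{e_i,e_j,f_k\}$. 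The main obstacle is precisely this last step — proving $F\i\y(M)=\emp$ — and, closely related, the triangle-closure property of $Y$ used in (c); these are where the full $M(K_4)$-structure of the pyramid and the cocircuit correspondence of (b) must be pushed hardest.
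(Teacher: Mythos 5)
Parts (a) and (b) follow the paper's line and are mostly sound: the eight cocycle-space patterns for $D\s\i F$, the coloop argument excluding $\{e_i,e_j,f_i,f_j\}$ for non-separating cocircuits, the connectivity check for the lift via the triangles of the pyramid, and the uniqueness via symmetric difference with $T\s$ all match the paper's proof. One slip in (b): you constrain the \emph{preimage} cocircuit $D\s$ (with $D\s\del T\s=C\s$) by quoting (a), but (a) applies only to non-separating cocircuits, and the preimage can perfectly well have the pattern $\{e_i,e_j,f_i,f_j\}$ --- indeed the paper observes that $C\s\u\{e_1,e_2\}$ \emph{is} a cocircuit of $M$, merely a separating one. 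So in case (b2) you still owe the step that $C\s\u e_k$ is a cocircuit at all (the paper gets it from $(C\s\u\{e_1,e_2\})\Delta T\s$ being a disjoint union of cocircuits together with minimality of $C\s$ and $T\s$); proving that $M\del(C\s\u e_k)$ is connected does not substitute for this. The first half of (c) is correct and in fact cleaner than the paper's parity count of the sets $A_{ij}$.

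The genuine gaps are in the second half of (c) and in (d). Your refinement of (c) rests on the claim that $Y$ of a $3$-connected binary matroid contains every triangle it meets; this is never proved (the appeal to Lemma \ref{lemmalemos} is not developed), and it is false: by Lemma \ref{comput}(b) the set $\widetilde{Y}(S_8)$ is a single element, and the paper's $S_{2n}$ proposition (via self-duality, which carries $a_4$ to $c$) identifies it as the element $c$, which lies on the triangles $\{a_i,b_i,c\}$ whose other two elements are in $Y(S_8)$. The paper needs no such principle: specializing the lifting argument to $e=f_i$, no $C\s_l$ meets $f_i$, so no lift contains $e_j$ or $e_k$ and the number of lifts containing $e_i$ is even; hence the lifted dependent family has empty symmetric difference and avoids the whole triangle $T_i=\{e_j,e_k,f_i\}$, giving $T_i\cont Y(M)$ and $\widetilde{Y}(M)\cont E(M)-T_i$, a cohyperplane, whence $f_i\notin cl\s_M(\widetilde{Y}(M))$. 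For (d) you propose a plan hinging on $F\i\widetilde{Y}(M)=\emp$, which you yourself flag as unproved; moreover your ``otherwise'' branch is confused --- if $T\cont\widetilde{Y}(M\del T\s)$ then, since a triangle of a $3$-connected binary matroid with at least four elements is coindependent, this already contradicts $r\s_{M\del T\s}(\widetilde{Y}(M\del T\s))\le2$ (and a $3$-set can never be both a triangle and a triad of a binary matroid, by orthogonality). The paper's (d) avoids all of this: assuming $r\s_M(\widetilde{Y}(M))\ge3$, coindependence of $T$ yields some $f_i\in Y(M\del T\s)$; Corollary \ref{ylift1} applied to the deletable element $f_i$ places $\widetilde{Y}(M)$ inside the corank-$\le3$ coflat $cl\s_M(\widetilde{Y}(M\del f_i)\u f_i)$, and comparing coranks forces $f_i\in cl\s_M(\widetilde{Y}(M))$, contradicting exactly the refinement of (c). As written, therefore, the second statement of (c) and part (d) are not established.
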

\begin{proof}
In this proof we set, for $\{i,j,k\}=\{1,2,3\}$, $T_k:=\{e_i,e_j,f_k\}$.

Let us prove (a). Suppose that $D\s\in \R\s(M)$, $D\s\neq T\s$ and $D\s\i F\neq\emp$. As $D\s$ intersects $F$, it follows that $D\s$ intersect $T_k$ for some $k$. By orthogonality with $T_k$, $D\s$ intersects $T\s$. Hence $|D\s\i T\s|=1$, since $M\del D\s$ is connected; say $e_1\in D\s$. By orthogonality with $T_2$ and $T_3$, it follows that $\{e_1,f_2,f_3\}\cont D\s$. As $M$ is binary, none of $T$, $T_2$ or $T_3$ is contained in $D\s$. It yields that $D\s\i F=\{e_1,f_2,f_3\}$. We proved (a).

Let us prove (b). First, we examine the case that $C\s\i T=\emp$. In this case, it is straight to check that $M\del C\s$ is connected. It is just left to show that $C\s$ is a cocircuit of $M$. Consider a cocircuit $D\s$ of $M$ such that $C\s\cont D\s\subsetneq C\s\u T\s$, say $e_1\notin D\s$. As $C\s\i T=\emp$, then $f_2,f_3\notin D\s$. By orthogonality with $T_2$ and $T_3$, $T\s\i D\s=\emp$. Thus $D\s=C\s$ and $C\s$ is a cocircuit of $M$. Moreover, in this case, (b1) holds. So we may assume that $C\s\i T\neq\emp$. By orthogonality with $T$, we may suppose that $C\s\i T=\{f_1,f_2\}$. Let $D_0\s$ be a cocircuit of $M$ such that $C\s\cont D\s_0\subsetneq C\s\u T\s$. By orthogonality with $T_1$ and $T_2$, either $D_0\s=C\s\u e_3$ or $D\s_0=C\s\u\{e_1,e_2\}$. Note that $C\s\u\{e_1,e_2\}=(C\s\u e_3)\Delta T\s$. Thus both $C\s\u e_3$ and $C\s\u\{e_1,e_2\}$ are cocircuits of $M$. But it is easy to check that that $M\del (C\s\u e_3)$ is connected and $M\del (C\s\u\{e_1,e_2\})$ is disconnected. Define $D\s:=C\s\u e_3$ to conclude (b) and (b2).

To prove (c), let $e\in Y(M\del T\s)$. As $\R\s_e(M)$ is linearly dependent, there are distinct non-separating cocircuits $\flist{C\s}{n}$ of $M\del T\s$ avoiding $e$ such that:
\begin{equation}\label{ylift2-eq1}
C\s_1\Delta\dots\Delta C\s_n=\emp.
\end{equation} 
For each $l=1,\dots,n$, define $D\s_l$ as the non-separating cocircuit of $M$ such that $C\s_l\cont D\s_l\cont C\s_l\u T\s$, as described in (b). Consider, for $\{i,j,k\}=\{1,2,3\}$, the following subsets of $\{1,\dots,n\}$:
\begin{eqnarray*}
B_i &:=& \{l:f_i\in C\s_l\} \,\text{, and}\\
A_{ij} &:=& \{l:f_i,f_j\in C\s_l\}=\{l:e_k\in D\s_l\}\,\text{ (this equality holds by (a)).}
\end{eqnarray*}

By \eqref{ylift2-eq1}, each $B_i$ has even cardinality. By (a), $B_i$ is equal to the disjoint union of $A_{ij}$ and $A_{ik}$. Thus $|A_{12}|,|A_{13}|$ and $|A_{23}|$ are congruent modulo $2$. Hence $D\s_1\Delta\dots\Delta D\s_n$ is equal to $\emp$ or $T\s$. Therefore $\flist{D\s}{n}$ or $\flist{D\s}{n}, T\s$ is a list of linearly dependent non-separating cocircuits of $M$ avoiding $e$. This proves the first part of (c).

For the second part of (c), say that $f_1\in Y(M\del T\s)$. Note that, as above, for $e=f_1$ we have $A_{12}=A_{13}=\emp$. Thus $|A_{23}|$ is even and, therefore, $D\s_1\Delta\dots\Delta D\s_n=\emp$. In particular this implies that $\flist{D\s}{n}$ is a list of linearly dependent cocircuits of $M$ avoiding $T_1$. To finish, note that, in this case, $\y(M)$ is contained in the cohyperplane $E(M)-T_1$ of $M$ and, therefore, $f_1\notin cl\s_M(\y(M))$. 


Now, we prove (d). Suppose, for a contradiction, that $r\s_M(\y(M))\ge 3$. As $M\del T\s$ is $3$-connected and binary and $T$ is a triangle of $M\del T\s$, and, by hypothesis, $r\s_{M\del T\s}(\y(M\del T\s))\le2$, it follows that $T\ncont \y(M\del T\s)$, say $f_1\notin \y(M\del T\s)$. Thus $f_1\in Y(M\del T\s)$. By (c), $f_1\notin \y(M)$. As, $M\del f_1$ is $3$-connected, by Corollary \ref{ylift1} for $e=f_1$, we have that $\y(M)\cont cl_M\s(\y(M\del f_1)\u f_1)$. But $r\s_M(\y(M\del f_1)\u f_1)\le 3$, because $r\s_{M\del f_1}(\y(M\del f_1))\le 2$. Thus, as $r\s_M(\y(M))\ge3$, $\y(M)$ spans $f_1$ in $M\s$. A contradiction to the second part of (c). This proves (d), and therefore, the lemma.
\end{proof}

The next lemma is a straight consequence of the submodularity of the rank function of a matroid.
\begin{lemma}\label{flats}
Let $M$ be a matroid, $\flist{X}{m}\cont E(M)$ and $n:=\displaystyle{\max_i}\,r_M(X_i)$. Suppose that $r_M(X_1\u\dots\u X_m)\ge n+1$. Then $r_M(X_1\i\dots\i X_m)\le n-1$.
\end{lemma}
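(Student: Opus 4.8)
The plan is to induct on $m$, using nothing beyond monotonicity of the rank function and the submodular inequality $r_M(A\cup B)+r_M(A\cap B)\le r_M(A)+r_M(B)$. The case $m=1$ is vacuous, since then $n=r_M(X_1)$ and the hypothesis $r_M(X_1)\ge n+1$ cannot hold. For $m=2$ the claim is immediate: submodularity gives
\[
r_M(X_1\cap X_2)\le r_M(X_1)+r_M(X_2)-r_M(X_1\cup X_2)\le 2n-(n+1)=n-1 .
\]

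For the inductive step, fix $m\ge 3$, set $Y:=X_1\cup\cdots\cup X_{m-1}$ and $n':=\max_{1\le i\le m-1}r_M(X_i)\le n$, and split into two cases. If $r_M(Y)\ge n'+1$, then the induction hypothesis applied to $X_1,\dots,X_{m-1}$ gives $r_M(X_1\cap\cdots\cap X_{m-1})\le n'-1\le n-1$, and since $X_1\cap\cdots\cap X_m\subseteq X_1\cap\cdots\cap X_{m-1}$ we are done by monotonicity. Otherwise $r_M(Y)=n'\le n$ (it cannot be smaller than $n'$, and the present case forbids it from being larger); then applying submodularity to the pair $Y,X_m$ and using $r_M(Y\cup X_m)=r_M(X_1\cup\cdots\cup X_m)\ge n+1$ yields $r_M(Y\cap X_m)\le n'+n-(n+1)=n'-1\le n-1$, and since $X_1\cap\cdots\cap X_{m-1}\subseteq Y$ we have $X_1\cap\cdots\cap X_m\subseteq Y\cap X_m$, so monotonicity again finishes the proof.

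The only subtlety is that $r_M(Y)$ need not be bounded by $n$, which is exactly what the case distinction is built to absorb: either $Y$ is spread widely enough across $X_1,\dots,X_{m-1}$ to feed the induction with the (possibly smaller) bound $n'$, or $Y$ is confined to a rank-$n'$ flat and behaves like a single large set to be paired against $X_m$. I do not anticipate any genuine difficulty; the statement is essentially bookkeeping on top of submodularity. As an alternative that bypasses the induction: if $r_M(X_1\cap\cdots\cap X_m)\ge n$, then, since $X_1\cap\cdots\cap X_m\subseteq X_i$ and $r_M(X_i)\le n$, every $X_i$ has rank exactly $n$ and hence the same closure as $X_1\cap\cdots\cap X_m$; thus $X_1\cup\cdots\cup X_m$ lies inside that common rank-$n$ flat, contradicting $r_M(X_1\cup\cdots\cup X_m)\ge n+1$.
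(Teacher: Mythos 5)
Your argument is correct: both the inductive submodularity proof and the closure-based alternative are valid, and the case analysis in the inductive step (including the degenerate subcases, which are simply vacuous) holds up. The paper offers no written proof, asserting the lemma is a straight consequence of submodularity, and your proposal is essentially a fleshed-out version of that same approach, so there is nothing further to reconcile.
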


\begin{lemma}\label{almost-engine}
Let $l\in\{0,1,2\}$ and $M$ and $N$ be $3$-connected binary matroids. Suppose that $M$ has an $N$-minor and $r\s(M)\ge 4$. If $M$ has a $(l+2)$-coindependent set $I\s$, such that for each $e\in I\s$, $e$ is vertically $N$-removable and $r\s_{co(M\del e)}[\y(co(M\del e))]\le l$, then $r\s_M(\y(M))\le l$.
\end{lemma}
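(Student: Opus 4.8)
The plan is to induct on $|I\s|$, using Corollary \ref{ylift1} as the engine that lifts the bound on $\y$ from a cosimplified deletion back to $M$. Pick any $e\in I\s$. Since $e$ is vertically $N$-removable, $co(M\del e)$ is $3$-connected with an $N$-minor, so Corollary \ref{ylift1} applies (after choosing the ground set of $co(M\del e)$ appropriately): $\y(M)\cont cl\s_M(\y(co[M\del e])\u e)$. The hypothesis gives $r\s_{co(M\del e)}(\y(co(M\del e)))\le l$, hence $r\s_M(\y(co(M\del e)))\le l$ as well, since the corank of a set can only drop under contraction/deletion when passing to a minor (here $co(M\del e)$ is a minor of $M$, and a spanning set of $\y$ in the minor still spans it in $M$ after adding back the contracted/deleted elements—more carefully, $cl\s_M$ of the image of a $\y(co(M\del e))$-spanning set contains $\y(co(M\del e))$, giving $r\s_M(\y(co(M\del e)))\le l$). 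Therefore $r\s_M(\y(M))\le r\s_M(\y(co(M\del e))\u e)\le l+1$. This already gives a bound one worse than we want, so the point of having $l+2$ elements in $I\s$ (rather than just one) is to shave off the extra $+1$.

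To get the sharp bound $r\s_M(\y(M))\le l$, I would argue as follows. Suppose for contradiction that $r\s_M(\y(M))\ge l+1$; by the above, $r\s_M(\y(M))=l+1$ and, for each $e\in I\s$, $\y(M)$ is spanned in $M\s$ by $\y(co(M\del e))\u e$, a set of corank at most $l+1$. Set $X_e:=cl\s_M(\y(co(M\del e))\u e)$, a flat of $M\s$ of rank at most $l+1$ containing the rank-$(l+1)$ flat $F:=cl\s_M(\y(M))$; hence $X_e=F$ for every $e\in I\s$. In particular $e\in F$ for all $e\in I\s$, so $I\s\cont F$. But $I\s$ is $(l+2)$-coindependent, meaning $r\s_M(I\s)=l+2$, while $I\s\cont F$ forces $r\s_M(I\s)\le r\s_M(F)=l+1$, a contradiction. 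Hence $r\s_M(\y(M))\le l$.

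One subtlety to nail down is the claim $r\s_M(A)\le r\s_{co(M\del e)}(A)$ for $A\cont E(co(M\del e))\cont E(M)$, i.e.\ that coranks do not increase when we pass from $M$ to the minor $co(M\del e)$; equivalently, in $M\del e / (\text{parallel classes collapsed})$... this should be packaged using the fact that for a minor $N'=M/C\del D$ one has $r\s_{N'}(A)=r\s_M(A\u C)-r\s_M(C)\ge r\s_M(A)-|C|+(\text{something})$—the clean statement is that contraction in $M$ is deletion in $M\s$, which does not increase corank, and deletion in $M$ is contraction in $M\s$, which also does not increase corank of subsets of the remaining ground set; simplification only identifies parallel elements in $M\del e$, i.e.\ deletes coloops-worth of redundancy in $M\s$, again not increasing corank. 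So $r\s_M(\y(co(M\del e)))\le l$ follows. The main obstacle, then, is not a deep argument but making this corank-monotonicity bookkeeping precise together with the compatibility of ground-set choices coming from Lemma \ref{lemmalemos}; once that is in place, the flat-intersection/coindependence counting closes the proof immediately. I expect the write-up to be short.
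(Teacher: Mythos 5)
Your overall strategy coincides with the paper's: both proofs lift $\y(M)$ into the corank-at-most-$(l+1)$ flats $X_e:=cl\s_M(\y(co(M\del e))\u e)$ via Corollary \ref{ylift1} and then play the $(l+2)$-coindependent set $I\s$ against these flats; the paper finishes by applying the dual of the submodularity Lemma \ref{flats} to $X_{e_1},\dots,X_{e_{l+2}}$, while you argue by contradiction that all the $X_e$ coincide with the flat $cl\s_M(\y(M))$ and hence would contain $I\s$ --- the same counting in different clothes. However, the step you yourself flag as the remaining subtlety is stated backwards: the inequality $r\s_M(A)\le r\s_{co(M\del e)}(A)$ for $A\cont E(co(M\del e))$ is false in general. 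Passing to a minor can only \emph{decrease} the corank of a fixed subset: since $co(M\del e)\s=si(M\s/e)$ and the cosimplification is a deletion in the dual, one has $r\s_{co(M\del e)}(A)=r_{M\s}(A\u e)-1\le r\s_M(A)$, and $r\s_M(A)$ exceeds $r\s_{co(M\del e)}(A)$ by exactly $1$ whenever $e\in cl\s_M(A)$. So your intermediate claim $r\s_M(\y(co(M\del e)))\le l$ can fail (by one). The repair is immediate and is what the paper implicitly uses: as $e$ is not a coloop of $M$, the contraction formula gives $r\s_M(A\u e)=r\s_{co(M\del e)}(A)+1$, hence $r\s_M(\y(co(M\del e))\u e)\le l+1$ directly, and this is the only bound your flat/coindependence argument (and the paper's) actually needs. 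With that correction --- and dropping the announced induction on $|I\s|$, which you never use --- your proof is complete and essentially the paper's.
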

\begin{proof}
Let $I\s:=\{e_1,\dots,e_{(l+2)}\}$. For $i=1,\dots,l+2$, we choose the ground set of $M_i:=co(M\del e_i)$ satisfying equation \eqref{yeq2}, in Corollary \ref{ylift1}. So we have that, for each $i$, $\y(M)\cont X_i:=cl\s_M(\y(M_i)\u e_i)$. Our hypothesis implies that, for each $i$, $r\s_M(X_i)\le l+1$. Define $X:=X_1\u\dots\u X_{(l+2)}$. As $I\cont X$ and $r\s_M(X)\ge l+2$ then, by the dual version of Lemma \ref{flats} for $n=l+1$, $r\s_M(X_1\i\dots\i X_m)\le l$. Thus $r\s(\y(M))\le l$, since $\y(M)\cont X_1\i\dots\i X_{l+2}$.
\end{proof}

\begin{lemma}\label{engine}
Let $M$ be a $3$-connected binary matroid with a $3$-connected minor $N$ with $r\s(M)\ge4$ and let $l\in\{0,1,2\}$. If $r\s(M)-r\s(N)\ge 2+l+\left\lfloor\frac{l}{2}\right\rfloor$ and $r_M(\y(M))\ge l+1$, then
\begin{enumerate}
\item [(a)] $M$ has a vertically $N$-removable element $e$ such that 
$\y(co[M\del e ])$ has rank at least $l+1$ in $co(M\del e)$, or
\item [(b)] $l=2$ and $M$ has an $N$-pyramid $e_1,e_2,e_3,f_1,f_2,f_3$ such that there is $K\in\{M\del \{e_1,e_2,e_3\}$, $M\del f_1$, $M\del f_2$, $M\del f_3\}$ satisfying $r_K(\y(K))\ge 3$.
\end{enumerate}
\end{lemma}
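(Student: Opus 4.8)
The plan is to combine the removability theorems from Section 2 (Theorems \ref{whittle} and \ref{costalonga}) with the ``lifting'' machinery of Section 3 (Corollary \ref{ylift1} and Lemma \ref{almost-engine}), arguing by contradiction. Assume that neither (a) nor (b) holds. First I would observe that, by the corank hypothesis $r\s(M)-r\s(N)\ge 2+l+\lfloor l/2\rfloor$, we may invoke Theorem \ref{whittle} (for $l\in\{0,1\}$, where $2+l+\lfloor l/2\rfloor\le 3$) or Theorem \ref{costalonga} (for $l=2$, where $2+l+\lfloor l/2\rfloor=5$) to produce a large set of vertically $N$-removable elements, or else an $N$-pyramid. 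I treat the two regimes in parallel.

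For $l\in\{0,1\}$: Theorem \ref{whittle} with $k=l+2\le 3$ gives an $(l+2)$-coindependent set $I\s$ of vertically $N$-removable elements of $M$. If (a) fails, then for every $e\in I\s$ the flat $\y(co(M\del e))$ has rank at most $l$ in $co(M\del e)$; but then Lemma \ref{almost-engine} (applied with this very $I\s$) forces $r\s_M(\y(M))\le l$, i.e. $r_M(\y(M))\le l$ in the dual sense used in the statement — contradicting the hypothesis $r_M(\y(M))\ge l+1$. (I should be careful here about the $r$ versus $r\s$ bookkeeping: the statement writes $r_M(\y(M))$, while Lemma \ref{almost-engine} produces a bound on $r\s_M(\y(M))$; these refer to the same quantity once one fixes whether $\y$ is viewed in $M$ or $M\s$, and I would spell that identification out explicitly once.)

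For $l=2$: $r\s(M)-r\s(N)\ge 5$, so Theorem \ref{costalonga} applies. In case (a) of that theorem we get a $4$-coindependent set of vertically $N$-removable elements; since $4=l+2$, the same argument via Lemma \ref{almost-engine} as above yields $r\s_M(\y(M))\le 2$, contradicting $r_M(\y(M))\ge 3$. In case (b) of Theorem \ref{costalonga} we get an $N$-pyramid $e_1,e_2,e_3,f_1,f_2,f_3$; this is exactly the structure named in conclusion (b) of the lemma, and I claim the displayed alternative on $K$ must hold. Indeed, suppose not: then $r_K(\y(K))\le 2$ for $K=M\del T\s$ and for each $K=M\del f_i$. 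But these are precisely the hypotheses of Lemma \ref{ylift2}(d), whose conclusion is $r\s_M(\y(M))\le 2$ — again contradicting $r_M(\y(M))\ge 3$. (One subtlety: Lemma \ref{ylift2}(d) is stated with $M\del T\s$ and $M\del f_i$, whereas Lemma \ref{almost-engine} and Corollary \ref{ylift1} use cosimplifications $co(M\del e)$; for an $N$-pyramid, deleting the triad $T\s$ or a base element $f_i$ from a $3$-connected binary matroid already leaves a $3$-connected matroid, so no cosimplification is needed and the two formulations agree. I would note this at the point of use.)

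The main obstacle I anticipate is precisely this interface bookkeeping rather than any deep new idea: matching the corank threshold $2+l+\lfloor l/2\rfloor$ to the correct removability theorem ($\le 3$ for $l\le 1$, $=5$ for $l=2$), and verifying that in each branch the set produced is genuinely $(l+2)$-coindependent so that Lemma \ref{almost-engine} applies verbatim. A secondary point to check is that in the pyramid branch we do not need the elements $e_i$ themselves to satisfy a rank bound — only the four matroids $M\del T\s, M\del f_1, M\del f_2, M\del f_3$ enter, which is exactly what Lemma \ref{ylift2}(d) consumes — so no further case analysis on the $e_i$ is required. Once these identifications are made explicit, the proof is a short deduction with no computation.
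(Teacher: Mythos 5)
Your proposal is correct and follows essentially the same route as the paper: invoke Theorem \ref{whittle} (for $l\le 1$) or Theorem \ref{costalonga} (for $l=2$) to obtain either an $(l+2)$-coindependent set of vertically $N$-removable elements or an $N$-pyramid, then conclude via Lemma \ref{almost-engine} in the first case and Lemma \ref{ylift2}(d) in the second. Your explicit handling of the $r$ versus $r\s$ bookkeeping and of the fact that $M\del T\s$ and $M\del f_i$ need no cosimplification is more careful than the paper's two-line argument, but it is the same proof.
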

\begin{proof}
By Theorems \ref{whittle} and \ref{costalonga}, one of the two statements holds:
\begin{enumerate}
\item [(i)] $M$ has an $(l+2)$-coindependent set $I:=\{e_1,\dots,e_{(l+2)}\}$ whose elements are vertically $N$-removable; or
\item [(ii)] $l=2$ and $M$ has an $N$-pyramid $e_1,e_2,e_3,f_1,f_2,f_3$, with top $T\s$.
\end{enumerate}
By lemma \ref{almost-engine}, (i) implies (a). By lemma \ref{ylift2}, (d), (ii) implies (b). The lemma is proved.
\end{proof}

\section{Some initial cases}

\newcommand{\ktt}{
\put(2,0){\circle*{2}}
\put(8,0){\circle*{2}}
\put(14,0){\circle*{2}}
\put(2,6){\circle*{2}}
\put(8,6){\circle*{2}}
\put(14,6){\circle*{2}}}

\newcommand{\kttlines}{
\drawline(2,0)(2,6)(8,0)(8,6)(14,0)(14,6)(2,0)(8,6)
\drawline(2,6)(14,0)
\drawline(8,0)(14,6)
}

\newcommand{\kp}{\drawline(2,6)(8,6)}
\newcommand{\kpp}{\drawline(2,6)(8,6)(14,6)}
\newcommand{\kppp}{\drawline(2,6)(8,6)(14,6)\spline(14,6)(8,15)(2,6)}
\newcommand{\kq}{\drawline(2,0)(8,0)}
\newcommand{\kqq}{\drawline(2,0)(8,0)(14,0)}
\newcommand{\kqqq}{\drawline(2,0)(8,0)(14,0)\spline(14,0)(8,-1)(2,0)}
The proof of the next lemma is just a routine check.
\begin{lemma}\label{initial-cases}
If $M\in \{F_7, F_7\s, M\s(K_5), R_{10}\}$ then $Y(M)=E(M)$.
\end{lemma}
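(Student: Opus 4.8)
The plan is a direct, finite verification for each of the four matroids, using the characterization "$e \in Y(M)$ if and only if $dep_e(M) > 0$" from the discussion preceding Lemma~\ref{lemmalemos}, equivalently $e \in Y(M)$ iff the family $\mathcal{R}^*_e(M)$ of non-separating cocircuits avoiding $e$ is linearly dependent over $GF(2)$, equivalently (by Theorem~\ref{lemos}(a)) iff $e$ avoids more than $r^*(M)-1$ non-separating cocircuits. So for each $M$ the task reduces to: (1) list all cocircuits of $M$; (2) determine which are non-separating, i.e. which cocircuits $C^*$ have $M \del C^*$ connected; (3) for each element $e$, count the non-separating cocircuits avoiding $e$ and check this count exceeds $r^*(M)-1$, or equivalently exhibit a linear dependence among them.

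First I would exploit duality and symmetry to cut the work roughly in half. Since $F_7^* $ is the dual of $F_7$, and non-separating cocircuits of $M$ are (by definition) the cocircuits whose deletion leaves $M$ connected — note these are \emph{not} simply dual to anything canonical in $M^*$, so $F_7$ and $F_7^*$ must in principle be handled separately — but both are highly symmetric: $F_7$ has a transitive automorphism group on its $7$ elements, as does $F_7^*$, as does $M^*(K_5)$ (vertex- and edge-transitivity of $K_5$), and $R_{10}$ is also vertex-transitive in the relevant sense. Transitivity means it suffices to check a \emph{single} element $e$ in each matroid and conclude $Y(M) = E(M)$ by symmetry. For $F_7 = PG(2,2)$: $r^*(F_7) = 4$, the cocircuits are the complements of lines, i.e. the $4$-element "quadrilaterals"; one checks a fixed element $e$ lies outside $7 - \text{(lines through } e) = 7 - 3 = $ well, rather $e$ avoids the cocircuit $C^*$ iff $e$ is on the corresponding line, so $e$ avoids exactly $3$ of the $7$ cocircuits; but $3 \not> r^*-1 = 3$, so I must instead check that all cocircuits are non-separating and that among the $3$ avoiding $e$ there \emph{is} a dependence — in $F_7$ the sum of the three cocircuits corresponding to the three lines through $e$ is empty (each of the other $4$ points lies on exactly one of these lines, wait, that gives sum nonzero), so I'd actually recount: the correct statement is $e \in Y(F_7)$ because $\mathcal{R}^*_e(F_7)$ spans only a proper subspace; the routine check confirms $dep_e(F_7) > 0$. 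I would carry out the analogous bookkeeping for $F_7^*$ (whose cocircuits are the $3$-element lines of $F_7$), for $M^*(K_5)$ (cocircuits $=$ bonds of $K_5$, non-separating ones $=$ those whose deletion leaves $K_5$'s cographic matroid connected), and for $R_{10}$ (use its standard representation; $r(R_{10}) = r^*(R_{10}) = 5$, and its cocircuit structure is well documented — every element is in the same number of non-separating cocircuits by vertex-transitivity).

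The key steps in order: (i) fix a concrete representation of each matroid over $GF(2)$; (ii) enumerate cocircuits as minimal supports of vectors in the row space of the dual (or kernel of the matrix); (iii) for each cocircuit test connectivity of the deletion — this is the only step requiring genuine casework, but it is small since each matroid has at most $10$ elements and its cocircuits are few and all of bounded size; (iv) invoke transitivity of the automorphism group to reduce to one representative element $e$; (v) verify $dep_e(M) > 0$ by producing an explicit $GF(2)$-linear dependence among the non-separating cocircuits avoiding $e$, or equivalently by the counting criterion of Theorem~\ref{lemos}(a). Since the hypothesis $r^*(M) \ge 4$ of Theorem~\ref{lemos} holds for all four ($r^*(F_7) = r^*(F_7^*) = 4$, $r^*(M^*(K_5)) = 6$, $r^*(R_{10}) = 5$), that theorem applies throughout.

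The main obstacle is purely one of careful enumeration rather than ideas: correctly identifying \emph{which} cocircuits are non-separating. For $F_7$ and $F_7^*$ this is immediate (the matroids are small and every single-element or small-cocircuit deletion is easily seen connected). For $M^*(K_5)$ one must recall that non-separating cocircuits of $M^*(G)$ correspond to the non-separating \emph{circuits} wait — to cocircuits of $M^*(G)$, i.e. bonds of $G$, whose deletion from $M^*(G)$ is connected; translating through the cographic-matroid dictionary, a bond $\delta(S)$ of $K_5$ gives a non-separating cocircuit of $M^*(K_5)$ precisely when contracting $\delta(S)$ in $M(K_5)$, i.e. identifying $S$ to a point and $V\setminus S$ to a point in $K_5$, leaves a $2$-connected graph — which holds for all bonds of $K_5$ since $K_5$ is sufficiently connected. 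Hence \emph{every} cocircuit of $M^*(K_5)$ is non-separating, and there are $2^4 - 1 = 15$ of them, each element lies in $8$ of them and avoids $7 > r^*-1 = 5$, giving $Y(M^*(K_5)) = E$. An analogous but slightly longer count settles $R_{10}$. I would present these four verifications compactly, leaning on the stated symmetry reductions, exactly as the paper's remark that "the proof is just a routine check" anticipates.
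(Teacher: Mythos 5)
Your overall plan — a finite, element-by-element check that $\mathcal{R}^*_e(M)$ is linearly dependent, with the transitive automorphism groups used to reduce to a single representative element — is exactly what the paper's "routine check" amounts to (the paper prints no proof), and with correct bookkeeping it does prove the lemma. But the two cases you actually attempt contain genuine errors. For $M^*(K_5)$ you use the wrong duality dictionary: the cocircuits of the bond matroid $M^*(K_5)$ are the \emph{cycles} of $K_5$ (its circuits are the bonds), and a cycle $C$ yields a non-separating cocircuit precisely when $M(K_5)/C$ is connected. This holds for the ten triangles but fails for every $4$-cycle and $5$-cycle, since contracting them creates loops; so the claim that every cocircuit of $M^*(K_5)$ is non-separating is false, and your counts ($15$ cocircuits, each element avoiding $7$, measured against $r^*-1=5$) rest on that misidentification together with a wrong corank: $r^*(M^*(K_5))=r(M(K_5))=4$, not $6$ (likewise $r^*(F_7^*)=3$, not $4$). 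The correct check is that each edge avoids the $7$ triangles not containing it, and these are linearly dependent.

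The $F_7$ case is left unresolved: you assert the dependence, then doubt it ("that gives sum nonzero"), then fall back on an unverified "the routine check confirms". The dependence does hold: the three non-separating cocircuits avoiding a point $e$ are the complements of the three lines through $e$, and each of the \emph{six} (not four) remaining points lies on exactly one of those lines, hence in exactly two of the three cocircuits, so their mod-$2$ sum is empty. This case also shows that you cannot lean on the counting criterion "avoids more than $r^*(M)-1$ non-separating cocircuits": each element of $F_7$ avoids exactly $3=r^*(F_7)-1$ of them, yet belongs to $Y(F_7)$ under the paper's operative criterion $dep_e(M)>0$; so the dependence must be exhibited directly, which is the point where your argument stalls. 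Finally, $F_7^*$ and $R_{10}$ are only promised, not done (for $F_7^*$ the non-separating cocircuits are the seven lines of $F_7$, of which four avoid each element and sum to zero; $R_{10}$ still needs an explicit enumeration from a representation). As written, then, the proposal has the right strategy but its two worked cases are, respectively, inconclusive and incorrect.
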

Consider the partition of the vertices of $K_{3,3,}$ into two stable sets $V_1$ and $V_2$. For $0\le j\le i\le 3$, we define $K_{3,3}^{(i,j)}$ by a simple graph obtained from $K_{3,3}$ by the addiction of $i$ edges joining vertices of $V_1$ and $j$ edges joining vertices of $V_2$. We also consider the already established notations $K_{3,3}':=K_{3,3}^{(1,0)}$, $K_{3,3}'':=K_{3,3}^{(2,0)}$ and  $K_{3,3}''':=K_{3,3}^{(3,0)}$.

We define a circuit $C$ in a connected matroid $M$ to be {\it non-separating} if $C$ is a non-separating cocircuit of $M\s$, that is, if $M/C$ is connected. We also say that a circuit $C$ of a $3$-connected graph $G$ is {\it non-separating}, if $E(C)$ is a non-separating circuit of $M(G)$.

\begin{lemma}\label{k33-ext}
Suppose that $M$ is a simple cographic matroid with an $M\s(K_{3,3})$-minor such that $r\s(M)=5$. Then
\begin{enumerate}
\item [(a)]If $M\ncong M\s(K_{3,3}''')$, then $Y(M)=E(M)$.
\item [(b)]If $M=M\s(K_{3,3}''')$, then $\y(M)$ is a triad of $M$ and $Y(M)=E(K_{3,3})$.
\item [(c)]$M$ has a $M\s(K_5)$-minor if and only if $M\cong M\s(K_{3,3}^{(i,j)})$ for some $i,j\in\{1,2,3\}$.
\end{enumerate}
\end{lemma}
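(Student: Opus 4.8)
The plan is to reduce everything to a finite computation on ten small graphs. Since $M$ is cographic, write $M=M\s(G)$; as $M$ is simple, $G$ has no bridge (a loop of $M$) and no vertex of degree at most $2$ (a degree-$2$ vertex gives a series pair of $M(G)$, i.e.\ a parallel pair of $M$), and isolated vertices may be discarded. Because $M$ has an $M\s(K_{3,3})$-minor, $M\s(K_{3,3})$ is connected, and $r\s(M)=5$, the graph $G$ must be connected (a second edge-bearing component would force $r\s(M)=\sum(|V(G_k)|-1)\ge 5+1>5$); hence $|V(G)|=r\s(M)+1=6$. Then $M(G)$ has an $M(K_{3,3})$-minor, so $G$ has a $K_{3,3}$-minor, and on six vertices this minor is spanning, so $K_{3,3}\cont G$ with all extra edges inside the two vertex classes $V_1,V_2$ of $K_{3,3}$. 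Thus $G\cong K_{3,3}^{(i,j)}$ for some $0\le j\le i\le 3$; each such graph is $3$-connected, so $M\s(K_{3,3}^{(i,j)})$ is $3$-connected and non-graphic ($G$ being non-planar), and Theorems~\ref{bixby} and \ref{lemos} apply.

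Second, I would set up a counting criterion. The non-separating cocircuits of $M\s(G)$ are exactly the edge sets of the non-separating cycles of $G$, and in a $3$-connected graph a cycle $C$ is non-separating precisely when it is chordless and $G-V(C)$ is connected (immediate from the definition $M(G)/E(C)$ connected). Now $e\in Y(M\s(G))$ iff $\R\s_e(M\s(G))$ is linearly dependent; by Theorem~\ref{lemos}(a) this family spans a hyperplane of the cocircuit space of $M\s(G)$, whose dimension is $r(M\s(G))=|E(G)|-|V(G)|+1$. Hence $e\in Y(M\s(G))$ iff the number of non-separating cycles of $G$ through $e$ is strictly less than $t(G)-|E(G)|+|V(G)|$, where $t(G)$ is the total number of non-separating cycles of $G$.

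Third, I would enumerate the non-separating cycles of $K_{3,3}^{(i,j)}$. A routine check shows the chordless cycles are: for each edge inside $V_1$ (resp.\ $V_2$), the three triangles formed by that edge with a vertex of $V_2$ (resp.\ $V_1$); the $4$-cycles $a_hb_la_kb_m$ with $a_ha_k$ and $b_lb_m$ both non-edges; and, when $i=3$, the triangle on $V_1$, and when $j=3$, the triangle on $V_2$. There are no chordless cycles of length at least $5$, since $|E(K_{3,3}^{(i,j)})|>6=|V|$ forbids induced Hamilton cycles and a short check rules out induced $5$-cycles. All of these have connected complement except the triangle on $V_1$, which does iff $j\ge 2$, and the triangle on $V_2$, iff $i\ge 2$. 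Hence $t(i,j)=3i+3j+(3-i)(3-j)+[\,i=3\text{ and }j\ge 2\,]+[\,j=3\,]$, and one reads off directly how many non-separating cycles contain each edge according to its type (inside $V_1$, inside $V_2$, or joining the classes) and to which intra-class edges are present. Comparing with the threshold $t(i,j)-3-i-j$: for every $(i,j)\ne(3,0)$ each edge lies in strictly fewer non-separating cycles than the threshold — the margin smallest at $(2,0)$, where every edge lies in exactly $3$ cycles while the threshold is $4$ — so $Y(M)=E(M)$, which is (a); for $(i,j)=(3,0)$ the threshold is $3$ and exactly the three edges inside $V_1$ fail (each lying in the three triangles $\{a_h,a_k,b_l\}$), so $\y(M)$ is this triangle of $K_{3,3}'''$, a triad of $M$, while the nine edges of the $K_{3,3}$-subgraph lie in $Y(M)$, which is (b). This bookkeeping — ten graphs, a few edge-types each — is the only real work, and is the step I expect to be the main obstacle, though no individual verification is deep.

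Finally, for (c), $M\s(K_{3,3}^{(i,j)})$ has an $M\s(K_5)$-minor iff $K_{3,3}^{(i,j)}$ has a $K_5$-minor. If $j\ge 1$ then $K_{3,3}^{(i,j)}\supseteq K_{3,3}^{(1,1)}$, and in $K_{3,3}^{(1,1)}$ contracting one edge joining $V_1$ to $V_2$ leaves a $K_5$ on the other five vertices, giving a $K_5$-minor. If $j=0$ then $K_{3,3}^{(i,j)}\cont K_{3,3}'''=K_6$ minus a triangle on $V_2$, which has no $K_5$-minor: on six vertices such a minor uses branch sets of sizes $1,1,1,1,1$ — forcing a $K_5$-subgraph, impossible since any five vertices contain two non-adjacent vertices of $V_2$ — or $2,1,1,1,1$ — forcing the four singletons to induce $K_4$, hence to be $V_1\cup\{b\}$ for some $b\in V_2$, and the size-two branch set to be the two remaining, non-adjacent vertices of $V_2$, impossible. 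So the $M\s(K_5)$-minor occurs exactly when $j\ge 1$, i.e.\ when $i,j\in\{1,2,3\}$.
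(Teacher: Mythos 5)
Your proposal is correct, but it reaches (a) and (b) by a genuinely different route than the paper. The paper argues case by case: for the graphs containing $K_{3,3}^{(1,1)}$ it first uses Corollary~\ref{ylift1} together with $Y(M^*(K_5))=E(M^*(K_5))$ (Lemma~\ref{initial-cases}) to confine $\widetilde{Y}(M)$ to the single edge $f$ with $co(M\backslash f)\cong M^*(K_5)$, and then exhibits an explicit linearly dependent family of non-separating triangles avoiding $f$; for $K_{3,3}$, $K_{3,3}'$, $K_{3,3}''$ and for item (b) it lists, orbit by orbit of the automorphism group, explicit dependent circuits (and, for the three extra edges of $K_{3,3}'''$, exactly six independent ones). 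You instead derive a uniform counting criterion, $e\in Y(M^*(G))$ iff the number of non-separating cycles through $e$ is strictly less than $t(G)-|E(G)|+|V(G)|$, which is a correct reformulation of the hyperplane statement in Theorem~\ref{lemos}(a) applied to $M^*(G)$, then classify the chordless cycles of $K_{3,3}^{(i,j)}$ and count. This buys uniformity over all ten graphs and gives both directions of (b) at once (the three intra-class edges of $K_{3,3}'''$ meet exactly $3$ non-separating cycles, so they avoid exactly $6=r(M)-1$ of them, and no independence check is needed), at the cost of relying on Lemos's hyperplane theorem and on the standard characterization of non-separating circuits of $M(G)$ as induced cycles with connected vertex-complement; I checked your values of $t(i,j)$, the thresholds, and the per-edge counts, and only the case $(3,0)$ produces elements outside $Y$, as required. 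Two small points to tighten: in (c), not every edge of $K_{3,3}^{(1,1)}$ joining $V_1$ to $V_2$ contracts to $K_5$ — you must take the edge joining the two degree-$3$ vertices (the one the paper specifies); contracting an edge covered by the two intra-class edges leaves two nonadjacent pairs. Also, your conclusion $G\cong K_{3,3}^{(i,j)}$ tacitly assumes $G$ is simple, which simplicity of $M=M^*(G)$ alone does not force (parallel edges of $G$ create no small bonds); this is harmless because $Y$ and $\widetilde{Y}$ are only defined, and the lemma only applied, for $3$-connected $M$, which makes $G$ simple and $3$-connected — the paper's own proof asserts the same classification without comment.
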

\begin{proof}First note that $M\cong M\s(K_{3,3}^{(i,j)})$ for some $0\le j\le i\le3$.

Item (c) follows from the facts that $K_{3,3}'''$ has no $K_5$-minor and that $K_{3,3}^{(1,1)}$ has a $K_5$-minor. Indeed, $|si(K_{3,3}'''/e)|<10$ for all $e\in E(K_{3,3}''')$ and $si(K_{3,3}^{(1,1)}/f)\cong K_5$ where $f$ is the edge joining the two degree-$3$ vertices of $K_{3,3}^{(1,1)}$.


Let us verify (a). First suppose that $M\cong M\s(G)$ for some connected graph extending $K_{3,3}^{(1,1)}$. Consider the edge $f$ such that $co(M\del f)\cong M\s(K_5)$, as before. By Corollary \ref{ylift1} and Lemma \ref{initial-cases}, it follows that $\y(M)\cont\{f\}$. It is just left to show that $f$ avoids a linearly dependent set of non-separating circuits in $G$ to finish this case. In fact, note that the triangles of $G$ that does not contain the end-vertices of $f$ constitute such a set. Now, we verify (a) for the remaining graphs:

\begin{enumerate}
\item $K_{3,3}$: note that each $4$-circuit of $K_{3,3}$ is non-separating. Let $g\in E(K_{3,3})$ and $v$ an end-vertex of $g$. Note that the set of the $4$-cocircuits of $K_{3,3}$ avoiding $v$ is linearly dependent. Thus $g\in Y(M\s(K_{3,3}))$, and so $Y(M\s(K_{3,3}))=E(M\s(K_{3,3}))$.
\item $K_{3,3}'$,
  \begin{picture}(16,6)\ktt\kp\kttlines\end{picture}:
this graph has orbits 
  \begin{picture}(16,6)\ktt\kp\end{picture},
  \begin{picture}(16,6)\ktt\drawline(2,6)(8,0)(8,6)(14,0)(2,6)(2,0)(8,6)\end{picture} and
  \begin{picture}(16,6)\ktt\drawline(2,0)(14,6)(8,0)(14,6)(14,0)\end{picture} 
of the automorphism group of its bond matroid.
The set of representatives
  \begin{picture}(16,6)\ktt\drawline(2,0)(2,6)(8,6)\end{picture}
of the first two orbits avoid the list
  \begin{picture}(16,6)\ktt\drawline(2,0)(8,6)(8,0)(14,6)(2,0)\end{picture} , 
  \begin{picture}(16,6)\ktt\drawline(8,0)(8,6)(14,0)(14,6)(8,0)\end{picture} , 
  \begin{picture}(16,6)\ktt\drawline(2,0)(8,6)(14,0)(14,6)(2,0)\end{picture}
of linearly dependent non-separating circuits, while the element 
  \begin{picture}(16,6)\ktt\drawline(2,0)(14,6) \end{picture}
of the third orbit avoids
  \begin{picture}(16,6) \ktt\drawline(2,6)(8,0)(14,6)(14,0)(2,6) \end{picture} , 
  \begin{picture}(16,6) \ktt\drawline(2,6)(8,6)(8,0)(2,6)\end{picture} , 
  \begin{picture}(16,6)\ktt\drawline(2,6)(8,6)(14,0)(2,6)\end{picture} and 
  \begin{picture}(16,6)\ktt\drawline(8,0)(8,6)(14,0)(14,6)(8,0)\end{picture}.

\item $K_{3,3}''$, 
\begin{picture}(16,6) \ktt\kttlines\drawline(2,6)(8,6)(14,6) \end{picture}: 
analogously, we have orbits 
  \begin{picture}(16,6) \ktt\drawline(2,6)(8,6)(14,6) \end{picture} , 
  \begin{picture}(16,6) \ktt\drawline(2,0)(8,6)(8,0)(8,6)(14,0) \end{picture} and 
  \begin{picture}(16,6) \ktt\drawline(2,6)(8,0)(14,6)(2,0)(2,6)(14,0)(14,6) \end{picture}. 
The linearly dependent non-separating circuits
  \begin{picture}(16,6) \ktt\drawline(2,6)(14,0)(14,6)(2,0)(2,6) \end{picture} , 
  \begin{picture}(16,6) \ktt\drawline(2,6)(8,0)(14,6)(14,0)(2,6) \end{picture} and 
  \begin{picture}(16,6) \ktt\drawline(2,6)(8,0)(14,6)(2,0)(2,6) \end{picture} 
avoid the two first orbits, while the representative
  \begin{picture}(16,6) \ktt\drawline(2,6)(8,0) \end{picture}
of the third orbit avoids
  \begin{picture}(16,6) \ktt\drawline(2,6)(14,0)(14,6)(2,0)(2,6) \end{picture} , 
  \begin{picture}(16,6) \ktt\drawline(2,6)(2,0)(8,6)(2,6) \end{picture} , 
  \begin{picture}(16,6) \ktt\drawline(2,6)(14,0)(8,6)(2,6) \end{picture} , 
  \begin{picture}(16,6) \ktt\drawline(8,6)(14,6)(2,0)(8,6) \end{picture} and 
  \begin{picture}(16,6) \ktt\drawline(8,6)(14,6)(14,0)(8,6) \end{picture}.
\end{enumerate}
\newcommand{\anotheredge}{\drawline(2,6)(4,8)(7,9)(9,9)(12,8)(14,6)}
It is remaining to prove (b). Note that in $K_{3,3}'''$, \begin{picture}(16,6)
\ktt \kttlines \kpp \anotheredge
\end{picture} , the edge 
\begin{picture}(16,6)\ktt\anotheredge\end{picture} 
avoids exactly the following non-separating circuits:
  \begin{picture}(16,6) \ktt\drawline(2,0)(2,6)(8,6)(2,0)\end{picture} ,
  \begin{picture}(16,6) \ktt\drawline(8,0)(2,6)(8,6)(8,0)\end{picture} ,
  \begin{picture}(16,6) \ktt\drawline(14,0)(2,6)(8,6)(14,0)\end{picture} ,
  \begin{picture}(16,6) \ktt\drawline(2,0)(14,6)(8,6)(2,0)\end{picture} ,
  \begin{picture}(16,6) \ktt\drawline(8,0)(14,6)(8,6)(8,0)\end{picture} and
  \begin{picture}(16,6) \ktt\drawline(14,0)(14,6)(8,6)(14,0)\end{picture} , 
that constitute a linearly independent set. So, the orbit 
  \begin{picture}(16,6)\ktt\kpp\anotheredge\end{picture} 
is contained in $\y(M\s(K_{3,3}'''))$. But the representative 
  \begin{picture}(16,6) \ktt\drawline(14,0)(14,6)\end{picture} 
of the other orbit avoids the linearly dependent circuits
  \begin{picture}(16,6) \ktt\drawline(2,0)(2,6)(8,6)(2,0)\end{picture} ,
  \begin{picture}(16,6) \ktt\drawline(8,0)(2,6)(8,6)(8,0)\end{picture} ,
  \begin{picture}(16,6) \ktt\drawline(2,6)(2,0)(8,0)(2,6)\end{picture} and
  \begin{picture}(16,6) \ktt\drawline(8,6)(2,0)(8,0)(8,6)\end{picture} . 
This finishes the proof of (b) and of the lemma.
\end{proof}

\begin{lemma}\label{k5-start}
If $M$ is a $3$-connected regular matroid with an $M\s(K_5)$-minor and $r\s(M)\le 5$, then $M$ is cographic and $Y(M)=E(M)$.
\end{lemma}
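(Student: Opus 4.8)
The plan is to reduce the statement to a finite check at $r\s(M)=5$ via Corollary~\ref{ylift1} and Theorem~\ref{whittle}, using Lemmas~\ref{initial-cases} and~\ref{k33-ext} as the base cases. First I would observe that any $3$-connected regular matroid with an $M\s(K_5)$-minor and $r\s(M)\le 5$ is in fact cographic: by a theorem of Seymour (or by direct inspection of the small regular matroids), the only $3$-connected regular non-cographic matroid of corank at most $5$ with an $M\s(K_5)$-minor that could interfere is $R_{10}$, and $R_{10}$ is handled by Lemma~\ref{initial-cases}; all other relevant regular matroids of corank $\le 5$ with an $M\s(K_5)$-minor are cographic, being bond matroids of graphs obtained from $K_5$ by a bounded number of simple single-element coextensions. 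So after disposing of $F_7,F_7\s,R_{10}$ (not regular / not relevant, or covered by Lemma~\ref{initial-cases}), it suffices to prove the claim for $M=M\s(G)$ with $G$ a $3$-connected simple graph having a $K_5$-minor and at most $5$ independent cycles, i.e.\ $|E(G)|-|V(G)|+1\le 5$.

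Next I would run an induction on $r\s(M)=|E(G)|-|V(G)|+1$. The base case is $r\s(M)\le 5$ with $r\s(M)$ minimal. When $r\s(M)\le 4$, one checks directly (the possibilities are $M\s(K_5)$ itself with $r\s=4$, and its small extensions); in particular $M\s(K_5)$ has $Y=E$ by Lemma~\ref{initial-cases}. For the inductive step at a given corank $r\s(M)=5$, I would pick any element $e$ with $co(M\del e)$ $3$-connected and with an $M\s(K_5)$-minor — such $e$ exists by Theorem~\ref{whittle} applied with $N=M\s(K_5)$ once $r\s(M)-r\s(N)\ge 1$. Set $M':=co(M\del e)$; by the inductive hypothesis (or by Lemmas~\ref{initial-cases} and~\ref{k33-ext} when $r\s(M')\le 5$) we have $Y(M')=E(M')$, equivalently $\y(M')=\emp$. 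Then Corollary~\ref{ylift1} gives, for a suitable choice of ground set of $M'$, that $\y(M)\cont cl\s_M(\y(M')\u e)=cl\s_M(\{e\})=\{e\}$. So $Y(M)\supseteq E(M)-e$, and it remains only to show $e\in Y(M)$, i.e.\ that $e$ avoids a linearly dependent family of non-separating cocircuits of $M$.

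To finish, since $M=M\s(G)$, non-separating cocircuits of $M$ are non-separating circuits of $G$, and $e$ corresponds to an edge of $G$ with end-vertices $u,v$. I would exhibit an explicit linearly dependent family of non-separating circuits of $G$ all avoiding $e$: for instance, all triangles of $G$ not using $u$ or $v$, together with (if needed) a few short non-separating cycles through the ``far side'' of $G$, chosen so their symmetric difference vanishes over $GF(2)$. Because $G$ has a $K_5$-minor and is $3$-connected with bounded corank, there is enough room away from $\{u,v\}$ for such a family to exist and to be dependent (it is dependent as soon as it has more members than the dimension of the subspace it spans, which happens once there are two cycles whose sum is again such a cycle). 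The main obstacle I anticipate is precisely the last step: verifying uniformly over all the finitely many graphs $G$ in range that such a dependent family avoiding an arbitrary edge always exists — this is the part that, as in Lemmas~\ref{initial-cases} and~\ref{k33-ext}, ultimately comes down to a (small) case analysis on the possible $G$, organised by how $G$ is built from $K_5$ by simple coextensions of corank $\le 5$.
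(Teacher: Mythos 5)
Your reduction is fine up to the point where you conclude $\y(M)\cont\{e\}$: the cographicness argument (no $R_{12}$-minor because the corank is too small, no $R_{10}$-minor because $R_{10}$ is a splitter, then Seymour), the corank-$4$ base case $M\cong M\s(K_5)$, and the use of Theorem~\ref{whittle} plus Corollary~\ref{ylift1} to confine $\y(M)$ to the single element $e$ are all sound. The genuine gap is the last step, and you say so yourself: to prove the lemma you must show $e\in Y(M)$ for every corank-$5$ cographic $M$ with an $M\s(K_5)$-minor and every admissible $e$, and what you offer is only the heuristic that there is ``enough room away from $\{u,v\}$'' for a dependent family of non-separating circuits, deferring the actual verification to a case analysis you never perform. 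The paper closes exactly this hole with a structural step your proposal is missing: since $M\s(K_5)$ is a splitter for the class of cographic matroids with no $M\s(K_{3,3})$-minor, a corank-$5$ cographic $M$ with an $M\s(K_5)$-minor must have an $M\s(K_{3,3})$-minor, hence by Lemma~\ref{k33-ext}(c) it is $M\s(K_{3,3}^{(i,j)})$ with $i,j\ge1$, and Lemma~\ref{k33-ext}(a) (whose proof is precisely the explicit circuit-list case analysis you anticipate, including the ``triangles avoiding the end-vertices of $f$'' trick for extensions of $K_{3,3}^{(1,1)}$) then gives $Y(M)=E(M)$ directly, with no residual element to treat. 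In other words, the case analysis you flag as the main obstacle is already available in the paper as Lemma~\ref{k33-ext}; without invoking it (or redoing it), your argument does not establish the statement.

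Two smaller points. First, there is a duality slip in your setup: for $M=M\s(G)$ the hypothesis $r\s(M)\le5$ means $|V(G)|\le 6$, not $|E(G)|-|V(G)|+1\le 5$; under your translation the class of graphs ``in range'' would be empty, since $K_5$ itself has cycle rank $6$, so the enumeration organising your intended case analysis is mis-specified (the correct candidates are the $3$-connected simple $6$-vertex graphs with a $K_5$-minor, i.e.\ the graphs $K_{3,3}^{(i,j)}$ with $i,j\ge1$). Second, at corank $4$ the claim ``$M\s(K_5)$ and its small extensions'' should just be $M\cong M\s(K_5)$: any cographic extension keeping corank $4$ creates series pairs in the bond matroid, contradicting $3$-connectivity. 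Neither of these is fatal, but the unproved final step is.
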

\begin{proof}
First let us verify that $M$ is cographic. As $r(M)\le 5$, $M$ has no $R_{12}$-minor. If $M$ has an $R_{10}$-minor, as $R_{10}$ is a splitter for the class of the regular matroids, then $M\cong R_{10}$; a contradiction. Thus $M$ is cographic. If $r(M)=4$, then $M\cong M\s(K_5)$ and the lemma follows. So, we may assume that $r(M)=5$. As $M\s(K_5)$ is a splitter for the class of the cographic matroids with no $M\s(K_{3,3})$-minor, then $M$ is isomorphic to the bond matroid of a graph with 6 vertices extending $K_{3,3}$. The lemma follows from items (a) and (c) of Lemma \ref{k33-ext}.
\end{proof}

The next lemma has a computer assisted proof that will be approached in Section \ref{section-comput}.

\begin{lemma}\label{comput} Let $M$ be a $3$-connected binary matroid and $e\in E(M)$. 
\begin{enumerate}
\item [(a)]If $co(M\del e)\cong S_8$, then $|\y(M)|\le1$.
\item [(b)]If $r\s(M)=4$ and $M\ncong S_8$, then $Y(M)=E(M)$. Moreover $|\y(S_8)|=1$.
\item [(c)]If $co(M\del e)$ is isomorphic to $M\s(K_{3,3}^{(i,0)})$ for some $i\in\{0,1,2\}$, then $Y(M)=E(M)$.
\item [(d)]If $M$ has an element $e$ such that $co(M\del e)\cong PG(3,2)\s$ then $E(M)=Y(M)$.
\end{enumerate}
\end{lemma}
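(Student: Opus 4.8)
The plan is to treat Lemma~\ref{comput} as a finite base-case verification that the induction in the main theorem will rest on, so the proof is genuinely computational in character; what I can describe here is how to organise that computation so it terminates and is trustworthy. First I would fix, for each of the four hypotheses, the relevant list of small matroids. For (b) the 3-connected binary matroids with $r\s(M)=4$ are finite in number (they are restrictions of $PG(3,2)\s$, equivalently cosimple rank-$4$ binary matroids up to isomorphism); one enumerates them, and for each one directly computes the non-separating cocircuits by running through all cocircuits $C\s$ and testing connectivity of $M\del C\s$, then for each element $e$ computes $dep_e(M)=|\R\s_e(M)|-\dim(\R\s_e(M))$ and checks whether it exceeds $0$. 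This yields $Y(M)=E(M)$ for all of them except $S_8$, and $|\y(S_8)|=1$ is read off directly. For parts (a), (c), (d) the matroid $M$ is \emph{not} a fixed matroid but ranges over all single-element "lifts" along a coextension: $M$ has an element $e$ with $co(M\del e)$ isomorphic to a prescribed matroid $K$ (namely $S_8$, $M\s(K_{3,3}^{(i,0)})$ with $i\le 2$, or $PG(3,2)\s$). The key structural reduction is that such an $M$ is obtained from $K$ by a bounded sequence of operations: adding the element $e$ back as a coextension (equivalently, $M\s$ is obtained from $K\s$ by a single-element extension), followed by a sequence of series extensions (the elements killed by cosimplification). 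Because $r\s(M)=r\s(K)+1$ is bounded in each case, the number of series classes and the number of extension points is bounded, so up to isomorphism there are only finitely many candidates for $M$, and each can be generated explicitly.

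The second step is to bound the search using the lifting lemmas already proved, rather than brute-forcing every $M$. By Corollary~\ref{ylift1}, once the ground set of $co(M\del e)$ is chosen correctly, $\y(M)\cont cl\s_M(\y(co[M\del e])\u e)$. So in case (a), $\y(S_8)$ has one element, hence $\y(M)\cont cl\s_M(\{g,e\})$ for a fixed $g$, giving $r\s_M(\y(M))\le 2$ and in fact one then checks $|\y(M)|\le 1$ by examining which of the at most few elements of that rank-$2$ coline actually satisfy $dep_f(M)>0$; in cases (c) and (d) we have $\y(K)=\emp$ by Lemmas~\ref{k33-ext}(a) and \ref{initial-cases} (noting $M\s(K_{3,3}^{(i,0)})$ for $i\le 2$ and $PG(3,2)\s$ both have full $Y$), so $\y(M)\cont cl\s_M(\{e\})=\{e\}$, and it remains only to show the single element $e$ is not in $\y(M)$, i.e. that $\R\s_e(M)$ is linearly independent. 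For that one exhibits, in each finitely many cases, either a direct count $|\R\s_e(M)|\le r\s(M)-1$ or an explicit independent subfamily; this is exactly the style of the pictures in the proof of Lemma~\ref{k33-ext}. Thus the computation reduces to: generate the finitely many $M$, and for the distinguished element $e$ compute $\R\s_e(M)$ and its rank.

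I would implement this as follows. Represent each $M$ by a binary matrix. Generate candidates by starting from the fixed $K$, dualising, performing all single-element extensions of $K\s$ up to isomorphism (finitely many, since $|E|$ and $r$ are small), dualising back to get all coextensions $M_0$ with $co(M_0\del e)\cong K$ and $M_0$ cosimple; then allow series extensions to recover the non-cosimple $M$, but observe that adding series elements does not change the non-separating cocircuit structure in the way relevant to $\y$ (a series pair forces the two elements to lie in exactly the same cocircuits), so up to the statement it suffices to handle the cosimple representatives and then argue the series elements inherit membership. For each representative, filter to those that are $3$-connected, then compute $\y(M)$ by the direct definition. The whole run is small: $r\s\le 5$ in all cases, $|E(M)|$ at most around $12$, so the number of cocircuits and the linear-algebra over $GF(2)$ are trivial.

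The main obstacle is not any single hard argument but correctness and completeness of the enumeration: one must be certain that every $3$-connected binary $M$ with $co(M\del e)\cong K$ really is captured by the "extend the dual, then series-extend" procedure, and that the choice of ground set for $co(M\del e)$ used to invoke Corollary~\ref{ylift1} is compatible with the representative actually generated. Getting the bookkeeping of the cosimplification map exactly right — which triads of $M$ through $e$ collapse, and hence how elements of $\y(co(M\del e))$ pull back — is the delicate part, and it is precisely the content of Lemma~\ref{lemmalemos} and Lemma~\ref{ylift} that makes it go through. Once that is pinned down, parts (a)--(d) are each a short finite check, and this is what Section~\ref{section-comput} will carry out in detail.
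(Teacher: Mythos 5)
Your overall plan — a finite, computer-checked enumeration plus a direct computation of non-separating cocircuits, organised around Corollary~\ref{ylift1}, with the corank-$4$ case (b) handled by a classification — is the same in spirit as the paper's (which enumerates the lifts via the $\Gamma(A,v)$ construction of Section~\ref{section-comput} and handles (b) through Theorem~\ref{classification}). But the enumeration you actually describe generates the wrong family. A single-element extension of $K\s$ dualises to a matroid $M_0$ with $M_0/e\cong K$, not to one with $co(M_0\del e)\cong K$: what is needed are the matroids $M$ in which $M\del e$ is a series extension of $K$ and $M$ is obtained from it by adding $e$ back (dually, $M\s/e$ is $K\s$ with parallel elements added and $M\s$ is a coextension of that). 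Moreover, your plan to ``handle the cosimple representatives and recover the non-cosimple $M$ by series extensions'' misplaces where the series structure lives: every target $M$ is $3$-connected, hence cosimple, and a series extension is never $3$-connected. The nontrivial series pairs occur inside $M\del e$ and correspond to triads of $M$ through $e$; such an $M$ is not a series extension of any of your cosimple representatives, and the two elements of such a pair are \emph{not} in the same cocircuits of $M$, so the claimed ``inheritance'' has no justification (Lemmas~\ref{lemmalemos} and~\ref{ylift} only give one-sided containments, which cannot yield $Y(M)=E(M)$ for these $M$ from a computation on the smaller matroid). These candidates would simply escape your check; capturing them is exactly the purpose of the entries $v_i=2$ in the paper's $\Gamma(A,v)$ construction, which duplicates a column with a changed new coordinate. (A smaller slip of the same deletion/contraction kind occurs in (b): the matroids with $r\s(M)=4$ are duals of restrictions of $PG(3,2)$, i.e.\ the duals of the matroids listed in Theorem~\ref{classification}, not restrictions of $PG(3,2)\s$.)

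Second, your verification criterion for the distinguished element is inverted. By definition, $e\notin\widetilde{Y}(M)$ means $e\in Y(M)$, i.e.\ $e$ avoids \emph{more} than $r\s(M)-1$ non-separating cocircuits, equivalently the family $\mathcal{R}\s_e(M)$ is linearly \emph{dependent}. So in cases (c) and (d), after reducing to $\widetilde{Y}(M)\cont\{e\}$, you must exhibit a linearly dependent family of non-separating cocircuits avoiding $e$; showing that $\mathcal{R}\s_e(M)$ is independent, or that $|\mathcal{R}\s_e(M)|\le r\s(M)-1$, or exhibiting an independent subfamily, would establish $e\in\widetilde{Y}(M)$ — the opposite of what is wanted. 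Relatedly, $\widetilde{Y}(PG(3,2)\s)=\emp$ is not given by Lemma~\ref{initial-cases}; it is an instance of part (b) (corank $4$, not isomorphic to $S_8$), so (b) must be established before it can be used in (d). With the enumeration corrected as above and the dependence criterion stated the right way round, the rest of your scheme does amount to the finite check the paper intends.
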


\section{Proof of the main theorem}

The statement of Theorem \ref{principal} just summarizes the lemmas proved in this section.

\begin{lemma}\label{k5-class}
If $M$ is a regular matroid with a $M\s(K_5)$-minor, then $E(M)=Y(M)$. 
\end{lemma}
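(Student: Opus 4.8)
The plan is to induct on $r\s(M)$, using Lemma \ref{k5-start} as the base case and the "engine" machinery of Section~3 to perform the inductive step. For the base case, if $r\s(M)\le5$ then Lemma \ref{k5-start} already gives that $M$ is cographic with $Y(M)=E(M)$, so $\y(M)=\emp$; in particular $r\s_M(\y(M))\le0$. Moreover $Y(R_{10})=E(R_{10})$ by Lemma \ref{initial-cases}, which handles the only other small regular matroid that can appear with an $M\s(K_5)$-minor. So I would assume $r\s(M)\ge6$, take $N:=M\s(K_5)$ (so $r\s(N)=4$), and aim to prove the stronger-looking statement $\y(M)=\emp$ by showing $r\s_M(\y(M))\le0$, i.e.\ applying Lemma \ref{engine} with $l=0$.

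First I would check the hypotheses of Lemma \ref{engine} for $l=0$: one needs $r\s(M)\ge4$ (true), $r\s(M)-r\s(N)\ge 2+0+\lfloor 0/2\rfloor=2$ (true since $r\s(M)\ge6$), and, for the contrapositive, that $r\s_M(\y(M))\ge 1$. Assuming $\y(M)\neq\emp$, Lemma \ref{engine}(a) (note that option (b) is vacuous when $l=0$) produces a vertically $M\s(K_5)$-removable element $e$ of $M$ such that $\y(co(M\del e))$ has rank at least $1$ in $M':=co(M\del e)$, i.e.\ $\y(M')\neq\emp$. But $M'$ is a $3$-connected binary matroid with an $M\s(K_5)$-minor, and since $M$ is regular so is $M'$ (minors of regular matroids are regular). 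If $r\s(M')\ge4$ we may apply the inductive hypothesis to $M'$ to get $\y(M')=\emp$, a contradiction. The only gap is the case $r\s(M')<4$: but an $M\s(K_5)$-minor forces $r\s(M')\ge4$, so in fact $r\s(M')\ge4$ always, and actually $r\s(M')\ge r\s(M)-1\ge5$, well within the inductive range. This yields the contradiction and closes the induction.

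The main thing to be careful about — and the only real obstacle — is the transition at the bottom of the induction: one must make sure that whenever the engine hands back a minor $M'$, that minor still satisfies $r\s(M')\ge4$ (so that Corollary \ref{ylift1} and hence the whole apparatus applies) and still carries an $M\s(K_5)$-minor (so that the inductive hypothesis is available). Both follow from $r\s(M\s(K_5))=4$ together with $r\s(co(M\del e))\ge r\s(M)-1$, so there is genuinely nothing to worry about once $r\s(M)\ge6$; the cases $r\s(M)\in\{4,5\}$ are exactly what Lemma \ref{k5-start} and Lemma \ref{initial-cases} were set up to dispatch. A secondary point worth stating explicitly is that "$M$ regular $\Rightarrow$ $co(M\del e)$ regular", which is immediate since cosimplification is a composition of deletions and contractions and regularity is minor-closed. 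With these two observations in place the induction runs without any computation.
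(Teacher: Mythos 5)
Your proposal is correct and follows essentially the same route as the paper: the paper also takes a minimal counterexample, uses Lemma \ref{k5-start} to force $r\s(M)\ge 6$, and then applies Lemma \ref{engine} with $l=0$ to produce a vertically $M\s(K_5)$-removable element $e$ with $\y(co(M\del e))\neq\emp$, contradicting minimality. Your extra checks (regularity and corank of $co(M\del e)$, vacuousness of option (b) when $l=0$) are exactly the implicit details of that argument.
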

\begin{proof}
Suppose the $M$ is a minimal counter-example to the lemma. By Lemma \ref{k5-start}, $r\s(M)\geq 6$. So, by Lemma \ref{engine}, for $l=0$, $M$ has a vertically $M\s(K_5)$-removable element $e$ such that $\y(co(M\del e))$ is non-empty; a contradiction to the minimality of $M$.
\end{proof}

\begin{lemma}\label{non-k333-class}
If $M$ is a non-graphic regular matroid with no $M\s(K_{3,3}''')$-minor then $E(M)=Y(M)$.
\end{lemma}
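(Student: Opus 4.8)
The plan is to argue by induction on $r\s(M)$, using the engine provided by Lemma \ref{engine} with $l=0$ and the various base cases already in hand. First I would set up the induction: assume $M$ is a minimal counterexample, so $M$ is $3$-connected, non-graphic, regular, has no $M\s(K_{3,3}''')$-minor, and $Y(M)\ne E(M)$, i.e. $\y(M)\ne\emp$. Since $M$ is non-graphic and regular with no $M\s(K_{3,3}''')$-minor, I need a $3$-connected minor $N$ to feed into the machinery. The natural candidates are the small non-graphic regular matroids: by the excluded-minor characterisation of graphic matroids (Tutte), a non-graphic binary matroid has an $F_7$, $F_7\s$, $M\s(K_5)$, or $M\s(K_{3,3})$-minor; regularity kills $F_7$ and $F_7\s$, and Lemma \ref{k5-class} already disposes of the case with an $M\s(K_5)$-minor. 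So I may assume $M$ has an $M\s(K_{3,3})$-minor but no $M\s(K_5)$- and no $M\s(K_{3,3}''')$-minor; take $N:=M\s(K_{3,3})$, which has $r\s(N)=5$.

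Next I would handle the base of the induction. If $r\s(M)=5$, then either $M$ is cographic — and then, since $M$ is simple we can apply Lemma \ref{k33-ext}(a) (as $M\ncong M\s(K_{3,3}''')$ by assumption), giving $Y(M)=E(M)$ — or $M$ is regular but not cographic with $r\s(M)=5$; here one must check (presumably via $R_{10}$, $R_{12}$ splitter/structure arguments as in Lemma \ref{k5-start}) that no such non-graphic example without an $M\s(K_5)$-minor arises, or reduce it to a cographic case. One subtlety is whether $M$ is simple: if $M$ has a triangle we may need to be careful, but $3$-connectivity with $r\s(M)\ge 5$ forces enough structure; in any case the relevant small matroids have been enumerated. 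Also $r\s(M)=4$ is covered by Lemma \ref{comput}(b) together with the fact that $S_8$ is non-regular, so that case cannot occur here.

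For the inductive step, assume $r\s(M)\ge 6$, so $r\s(M)-r\s(N)\ge 1 = 2+l+\lfloor l/2\rfloor$ with $l=0$. Since $\y(M)\ne\emp$ we have $r_M(\y(M))\ge 1=l+1$. Lemma \ref{engine}(a) (the pyramid alternative (b) only occurs for $l=2$) then yields a vertically $M\s(K_{3,3})$-removable element $e$ of $M$ such that $\y(co(M\del e))$ has rank at least $1$ in $co(M\del e)$, i.e. $\y(co(M\del e))\ne\emp$. Now $M':=co(M\del e)$ is a $3$-connected regular matroid with an $M\s(K_{3,3})$-minor (hence non-graphic) and $r\s(M')<r\s(M)$. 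To invoke the inductive hypothesis I must know $M'$ has no $M\s(K_{3,3}''')$-minor; this follows because $M\s(K_{3,3}''')$ a minor of a cosimplification of a minor of $M$ would give $M$ an $M\s(K_{3,3}''')$-minor, contradicting our assumption. So the inductive hypothesis gives $\y(M')=\emp$, contradicting $\y(M')\ne\emp$. This completes the argument.

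The main obstacle I anticipate is the $r\s(M)=5$ base case, specifically ruling out (or reducing to cographic) a non-graphic, regular, rank-$5$ $M$ with an $M\s(K_{3,3})$-minor but no $M\s(K_5)$- and no $M\s(K_{3,3}''')$-minor that fails to be cographic — this is where one leans on the regular matroid decomposition theorem ($R_{10}$ as a splitter, $R_{12}$, $3$-sums) and on a careful bookkeeping of which rank-$5$ regular matroids actually contain $M\s(K_{3,3})$. A secondary point of care is the simplicity/cosimplicity hypotheses needed to apply Lemma \ref{k33-ext} and the fact that cosimplification preserves the absence of the excluded minor; these are routine but must be stated cleanly. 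Everything else is a direct application of Lemmas \ref{engine}, \ref{k5-class}, \ref{comput} and the induction.
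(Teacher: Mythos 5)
There is a genuine gap in your inductive step: Lemma \ref{engine} with $l=0$ requires $r^*(M)-r^*(N)\ge 2+l+\lfloor l/2\rfloor = 2$, not $1$ as you assert (your claimed equality ``$1=2+l+\lfloor l/2\rfloor$'' is false). With $N=M^*(K_{3,3})$, which has $r^*(N)=5$, the engine is therefore only available when $r^*(M)\ge 7$, and your argument leaves the case $r^*(M)=6$ unhandled. This is not a cosmetic issue: the engine works by producing a $2$-coindependent set of vertically removable elements (Lemma \ref{almost-engine}), and when the corank gap is only $1$, Theorem \ref{whittle} gives a single removable element $e$, from which Corollary \ref{ylift1} only yields $\widetilde{Y}(M)\subseteq cl^*_M(\widetilde{Y}(co(M\backslash e))\cup e)=\{e\}$ even when the inductive hypothesis gives $\widetilde{Y}(co(M\backslash e))=\emptyset$; that is, you only get $|\widetilde{Y}(M)|\le 1$, which does not contradict the minimality of your counterexample nor prove $Y(M)=E(M)$.

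The paper closes exactly this hole by a separate argument at corank $6$: Theorem \ref{whittle} gives a vertically $M^*(K_{3,3})$-removable element $e$; since $co(M\backslash e)$ has no $R_{10}$- or $R_{12}$-minor it is cographic of corank $5$, hence by Lemma \ref{k33-ext}(c) (no $M^*(K_5)$-minor) and the absence of an $M^*(K_{3,3}''')$-minor it is isomorphic to $M^*(K_{3,3}^{(i,0)})$ for some $i\in\{0,1,2\}$, and then the computational Lemma \ref{comput}(c) — which you never invoke and which is essential here — gives $Y(M)=E(M)$ directly for such $M$. Your treatment of the corank-$5$ base case (cographic via the $R_{10}$ splitter and the absence of $R_{12}$, then Lemma \ref{k33-ext}(a)) and of the corank $\ge 7$ step matches the paper, but as written the proposal does not prove the lemma for $r^*(M)=6$.
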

\begin{proof}
Suppose that $M$ is a minimal counter-example for the lemma. By lemma \ref{k5-class}, $M$ has no $M\s(K_5)$-minor. Thus, since $M$ is not graphic, $M$ has a $M\s(K_{3,3})$-minor. 

If $r\s(M)\ge 7$, then, by Lemma \ref{engine} for $l=0$, it follows that $M$ has a vertically $M\s(K_{3,3})$-removable element $e$ such that $\y(co(M\del e))\neq\emp$. But it contradicts the minimality of $M$. Hence $r\s(M)\le6$.

Note that $M$ is not isomorphic to $R_{10}$ nor have an $R_{10}$-minor, since $R_{10}$ is a splitter for the class of the regular
matroids. If $r\s(M)=6$, $M$ has a vertically $M\s(K_{3,3})$-removable element $e$. As $co(M\del e)$ has no minor isomorphic to $R_{10}$ or $R_{12}$, it follows that $co(M\del e)$ is cographic. In particular, $M$ is a corank-$5$ cographic matroid extending $M\s(K_{3,3})$. By Lemma \ref{k33-ext}(c), $co(M\del e)$ is isomorphic to $M\s(K_{3,3}^{(i,0)})$ for some $i\in\{0,1,2\}$. In this case the result follows from Lemma \ref{comput}(c). Thus $r\s(M)=5$. As before, $M$ has no $R_{10}$ or $R_{12}$-minor and $M$ is cographic. Now the result follows from Lemma \ref{k33-ext}, (a). 
\end{proof}

\begin{lemma}
Suppose that $M$ is a $3$-connected non-regular binary matroid. Then $|\y(M)|\le 1$. Moreover, if $M$ has no $S_8$-minor, then $Y(M)=E(M)$.
\end{lemma}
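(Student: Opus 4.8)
The plan is to follow the same template used in the preceding two lemmas of this section, namely to take a minimal counterexample $M$ and use the ``engine'' Lemma~\ref{engine} to descend to a smaller matroid, together with the small-rank base cases supplied by Lemmas~\ref{initial-cases} and~\ref{comput}. So suppose $M$ is a $3$-connected non-regular binary matroid that is a counterexample to the assertion $|\y(M)|\le 1$ (and, for the ``moreover'' part, either a counterexample with no $S_8$-minor and $\y(M)\ne\emp$). Since $M$ is non-regular and binary, $M$ has an $F_7$- or an $F_7\s$-minor by the excluded-minor characterization of regular matroids; fix $N\in\{F_7,F_7\s\}$ accordingly. The aim is to apply Lemma~\ref{engine} with $l=0$ (so $r_M(\y(M))\ge 1$, i.e.\ $\y(M)\neq\emp$, is the hypothesis, and we need $r\s(M)-r\s(N)\ge 2$, i.e.\ $r\s(M)\ge 5$): this hands us a vertically $N$-removable element $e$ with $\y(co(M\del e))\neq\emp$. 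But $co(M\del e)$ is again $3$-connected, binary, non-regular (it has an $F_7$- or $F_7\s$-minor), and smaller, so by minimality $|\y(co(M\del e))|\le 1$; combined with Corollary~\ref{ylift1}, $\y(M)\cont cl\s_M(\y(co(M\del e))\u e)$, so $r\s_M(\y(M))\le 2$ but we want $|\y(M)|\le 1$, which needs a bit more care — see the obstacle paragraph.

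For the low-rank cases $r\s(M)\le 4$, Lemma~\ref{comput}(b) gives directly that $Y(M)=E(M)$ unless $M\cong S_8$, and $|\y(S_8)|=1$; note $S_8$ is non-regular and has an $S_8$-minor (itself), so this is consistent with both halves of the statement. That leaves $r\s(M)\le ?$ small cases not covered by $r\s(M)\ge 5$: one must check $r\s(M)=4$ (done) and confirm that $F_7,F_7\s$ themselves have $r\s=4$ and satisfy $Y=E$ by Lemma~\ref{initial-cases}. For the ``moreover'' clause, if $M$ has no $S_8$-minor we want $\y(M)=\emp$. Here I would use the finer consequence of the descent: when $\y(M)\neq\emp$, running Lemma~\ref{engine} repeatedly produces a chain down to a rank-$4$ matroid with nonempty $\y$, which by Lemma~\ref{comput}(b) must be $S_8$; but then $M$ has an $S_8$-minor, a contradiction. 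One also needs that the $N$-removable element $e$ can be chosen so that $co(M\del e)$ still has no $S_8$-minor is \emph{not} what we want — rather, we want to conclude $M$ \emph{does} have such a minor, so the chain argument is in the contrapositive direction and this subtlety disappears.

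The main obstacle is upgrading ``$r\s_M(\y(M))\le 2$'' to the sharp ``$|\y(M)|\le 1$''. Corollary~\ref{ylift1} only controls $\y(M)$ up to closure, so a single descent step is too lossy. The fix, mirroring Lemma~\ref{almost-engine}, is to use that Theorem~\ref{whittle} with $k=2$ (applicable once $r\s(M)-r\s(N)\ge 2$) yields a $2$-coindependent set $\{e_1,e_2\}$ of vertically $N$-removable elements; choosing ground sets so that $\y(M)\cont X_i:=cl\s_M(\y(co(M\del e_i))\u e_i)$ with $r\s_M(X_i)\le 2$ (using $|\y(co(M\del e_i))|\le 1$ from minimality), and then $\y(M)\cont X_1\i X_2$. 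If $|\y(M)|\ge 2$ then $r\s_M(\y(M))=2=r\s_M(X_1)=r\s_M(X_2)$, so $X_1=X_2=cl\s_M(\y(M))$ is a common rank-$2$ coline through both $e_1$ and $e_2$, forcing $e_1,e_2\in cl\s_M(\y(M))$; but $\{e_1,e_2\}$ being coindependent and $\y(M)$ a $2$-element coindependent set inside this coline would make $e_1,e_2,\y(M)$ all lie on one coline, which one then contradicts either by a direct count (a rank-$2$ cocircuit-closure contains at most $3$ elements in the binary case, and juggling which of these coincide with $e_1,e_2$) or by re-invoking the second part of a lifting lemma. I expect this bookkeeping — ruling out all the ways $e_1$, $e_2$, and the two elements of $\y(M)$ could be arranged on a common coline — to be the only real content beyond the now-standard descent, and it should be dispatched by the same kind of orthogonality argument used in Lemma~\ref{ylift2}.
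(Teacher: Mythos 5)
Your overall framework (minimal counterexample, descent via Lemma~\ref{engine}, base cases from Lemmas~\ref{initial-cases} and~\ref{comput}(b)) matches the paper, but the step you yourself flag as ``the only real content'' — upgrading $r\s_M(\y(M))\le 2$ to $|\y(M)|\le 1$ — is a genuine gap, and the device you propose for it does not close it. With only a $2$-coindependent set $\{e_1,e_2\}$ and minimality giving $|\y(co(M\del e_i))|\le 1$, you get $\y(M)\cont X_1\i X_2$ with $r\s_M(X_i)\le 2$; but the intersection trick (the dual of Lemma~\ref{flats}) needs $r\s_M(X_1\u X_2)\ge 3$, and a $2$-coindependent set only guarantees corank $2$, so the bad case is precisely $X_1=X_2=cl\s_M(\y(M))$. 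Your ``at most $3$ elements on a coline'' count then leaves unexcluded exactly the configurations where $\{e_1,e_2\}$ overlaps $\y(M)$ or $\{e_1,e_2\}\u\y(M)$ is a triad — e.g.\ $\y(M)=\{e_1,e_2\}$ with $e_2\in\y(co(M\del e_1))$ and $e_1\in\y(co(M\del e_2))$. Nothing in Lemma~\ref{ylift}, Corollary~\ref{ylift1} or the orthogonality arguments of Lemma~\ref{ylift2} (whose second part is specific to pyramid tops) rules these out, and you give no concrete argument. The paper avoids the issue entirely: it runs Lemma~\ref{engine} at $l=1$, i.e.\ a $3$-coindependent set from Theorem~\ref{whittle} with $k=3$, which directly produces a smaller matroid with $|\y|\ge 2$ and contradicts minimality; this needs $r\s(M)-r\s(F_7\s)\ge 3$, i.e.\ $r\s(M)\ge 6$, and the leftover case $r\s(M)=5$ is settled by the computational facts: engine at $l=0$ gives $e$ with $\y(co(M\del e))\neq\emp$, Lemma~\ref{comput}(b) forces $co(M\del e)\cong S_8$, and Lemma~\ref{comput}(a) (which your proposal never uses) says that any $M$ with $co(M\del e)\cong S_8$ already satisfies $|\y(M)|\le 1$. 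Your plan handles neither the coline bookkeeping nor this corank-$5$ case, so the first assertion is not proved.

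Two smaller points. First, you ``fix $N\in\{F_7,F_7\s\}$'' and assert that $l=0$ only needs $r\s(M)\ge 5$; that is true for $N=F_7\s$ ($r\s=3$) but not for $N=F_7$ ($r\s=4$). The paper first reduces to the case where $M$ has an $F_7\s$-minor, using that $F_7\s$ (resp.\ $F_7$) is a splitter in the relevant class, so that the smallest corank gap is available; you need this reduction too. Second, your argument for the ``moreover'' clause (descend with $l=0$; by minimality the smaller matroid has no $S_8$-minor and nonempty $\y$, contradiction, so $r\s(M)=4$ and Lemma~\ref{comput}(b) finishes) is essentially the paper's and is fine once the $F_7\s$-minor reduction is in place.
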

\begin{proof}
Note that $F_7$ satisfies the lemma. But $F_7$ and $F_7\s$ are the unique excluded minors for regularity in the class of the binary matroids. Moreover $F_7\s$ is a splitter for the class of binary matroids with no $F_7$-minor. Thus we may assume that $M$ has a $F_7\s$-minor.

First suppose that $M$ is a minimal counter-example for the first part of the lemma. By Lemma \ref{comput}(b), $r\s(M)\ge 5$. If $r\s(M)=5$ then, by Lemma \ref{engine} (for $l=0$), $M$ has an element $e$ such that $\y(co(M\del e))\neq\emp$. By Lemma \ref{comput}(b) again, $co(M\del e) \cong S_8$. But it contradicts Lemma \ref{comput}(a). Thus $r\s(M)\ge 6$. It follows by Lemma \ref{engine} (for $l=1$), that $M$ has a vertically $F_7\s$-removable element $e$ such that $|\y(co(M\del e)|\ge2$. Thus $co(M\del e)$ contradicts the minimality of $M$.

Now suppose that $M$ is a minimal counter-example for the second part of the lemma. 
If $r\s(M)\ge 5$, then, by Lemma \ref{engine}, for $l=0$, $M$ has a vertically $F_7\s$-removable element $e$ such that $\y(co(M\del e))\neq\emp$, a contradiction to the minimality of $M$. Thus $r\s(M)=4$. Now, the result follows from Lemma \ref{comput}(b).
\end{proof}

\begin{lemma}
If $M$ is a binary $3$-connected matroid with a $PG(3,2)\s$-minor, then $Y(M)=E(M)$.
\end{lemma}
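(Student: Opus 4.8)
The plan is to run the minimal-counterexample scheme used in the preceding lemmas of this section, taking $N:=PG(3,2)\s$, which is a $3$-connected binary matroid with $r\s(N)=r(PG(3,2))=4$ and $|E(N)|=15$. So suppose $M$ is a $3$-connected binary matroid with an $N$-minor, $\y(M)\neq\emp$, and $|E(M)|$ minimum subject to this. Then $|E(M)|\geq 15$, so $M$ is loopless (hence $r_M(\y(M))\geq 1$) and $r\s(M)\geq 4$.

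First I would dispatch the case $r\s(M)\geq 6$. There $r\s(M)-r\s(N)\geq 2$ and $r_M(\y(M))\geq 1$, so Lemma \ref{engine} with $l=0$ gives a vertically $N$-removable element $e$ with $\y(co(M\del e))\neq\emp$; as $co(M\del e)$ is then a $3$-connected binary matroid with an $N$-minor and strictly fewer elements than $M$, this contradicts minimality. Hence $r\s(M)\in\{4,5\}$. If $r\s(M)=4$, then $M\ncong S_8$ (since $M$ has more than $8$ elements), so $Y(M)=E(M)$ by Lemma \ref{comput}(b), a contradiction. If $r\s(M)=5$, Theorem \ref{whittle} with $k=1$ gives a vertically $N$-removable element $e$, and $K:=co(M\del e)$ is a $3$-connected binary matroid with an $N$-minor and $r\s(K)=r\s(M\del e)=4$ (deleting the non-coloop $e$ drops the corank by one and cosimplification preserves it). I would then show $K\cong PG(3,2)\s$ and invoke Lemma \ref{comput}(d) applied to $M$ with this $e$, which yields $Y(M)=E(M)$, the final contradiction.

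The one step requiring care is the identification $K\cong PG(3,2)\s$, that is, that $PG(3,2)\s$ is the unique $3$-connected binary matroid of corank $4$ with a $PG(3,2)\s$-minor. Passing to duals, $K\s$ is a simple binary matroid of rank $4$ having $PG(3,2)$ as a minor; since $r(K\s)=4=r(PG(3,2))$, in any deletion/contraction expression for this minor the rank can drop at no step, which forces the contracted elements to be loops of an intermediate restriction of $K\s$, hence to be absent by simplicity. So $PG(3,2)$ is a restriction of $K\s$; but the points of $PG(3,2)$ already exhaust the nonzero vectors of $GF(2)^4$, so any further element of $K\s$ would be parallel to one of them, contradicting simplicity. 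Thus $K\s=PG(3,2)$. This is exactly the standard fact that $PG(3,2)$ is the largest simple binary matroid of rank $4$, and once it is in place every other step is a direct call to Lemma \ref{engine}, Theorem \ref{whittle}, or Lemma \ref{comput}, so I anticipate no further difficulty.
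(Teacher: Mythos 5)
Your proposal is correct and follows essentially the same route as the paper: a minimal counterexample, Lemma \ref{engine} with $l=0$ to force $r\s(M)\le 5$, Lemma \ref{comput}(b) to exclude $r\s(M)=4$, Theorem \ref{whittle} to get a vertically removable element $e$ with $co(M\del e)\cong PG(3,2)\s$, and Lemma \ref{comput}(d) for the final contradiction. The only difference is that you spell out the identification $co(M\del e)\cong PG(3,2)\s$ via the maximality of $PG(3,2)$ among simple rank-$4$ binary matroids, which the paper simply asserts.
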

\begin{proof}
Let $M$ be a minimal counter-example for the lemma. If $r\s(M)\ge 6$, then, by Lemma \ref{almost-engine} for $l=0$, we have a contradiction to the minimality $M$. So, $r\s(M)\le5$, but, by Lemma \ref{comput}(b), $r\s(M)\ge 5$. Thus $r\s(M)=5$. By Theorem \ref{whittle}, $M$ has a vertically $PG(3,2)\s$-removable element $e$. Thus $co(M\del e)\cong PG(3,2)\s$, since $PG(3,2)\s$ is a maximal rank-$4$ binary matroid. But it contradicts Lemma \ref{comput}(v).
\end{proof}

\begin{proofof}\emph{Proof of Theorem \ref{todo-teo}: }It is clear that Conjecture \ref{todo2} is a particular case of Conjecture \ref{todo}. In other hand if we consider a minimal counter-example $M$ for the converse, by Lemma \ref{engine} (for $l=2$), analogously to the preceding proofs, $M$ has a minor that contradicts the minimality of $M$.
\end{proofof}

\section{Extremal cases for the main theorem}
Here we denote by $Z_r$ the {\it binary rank-$r$ spike}: a matroid represented by a binary $(2r+1)\times r$ matrix in the form $[I_r|\bar{I_r}|\vec{1}]$, where $\bar{I_r}$ is $I_r$ with the symbols interchanged and $\vec{1}$ is a column full of ones. We use the respective labels $\flist{a}{r},\flist{b}{r},c$ in this representation. We also define, for $n\ge 4$, $S_{2n}:=Z_n\del b_n$.

\begin{prop}
For $n\ge 4$, $S_{2n}$ attains the bound $|\y(S_{2n})|=1$ in Theorem \ref{principal}.
\end{prop}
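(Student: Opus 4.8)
The plan is to verify that $S_{2n}$ is a $3$-connected binary \emph{non-regular} matroid, so that part~(a) of Theorem~\ref{principal} applies and yields $|\y(S_{2n})|\le 1$, and then to produce a single element lying in $\y(S_{2n})$, namely the tip $c$ of the underlying spike. Since $r\s(S_{2n})=n$, it is equivalent to show that $c$ avoids exactly $n-1$ non-separating cocircuits of $S_{2n}$, which is not more than $r\s(S_{2n})-1$, so that $c\notin Y(S_{2n})$; together with the upper bound this forces $\y(S_{2n})=\{c\}$.

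First I would record the combinatorics of $S_{2n}=Z_n\del b_n$. With labels $a_1,\dots,a_n,b_1,\dots,b_{n-1},c$ this is the rank-$n$ binary spike with tip $c$ whose $n$-th leg is broken: $\{a_i,b_i,c\}$ is a triangle for $i\le n-1$, $\{a_i,b_i,a_j,b_j\}$ is a $4$-circuit for distinct $i,j\le n-1$, and every circuit of $S_{2n}$ containing $a_n$ is a ``long'' circuit of the spike, hence contains exactly one element of each leg $\{a_i,b_i\}$, $i\le n-1$. A routine inspection of the row space of the representing matrix gives the full list of cocircuits of $S_{2n}$: the $4$-sets $\{a_i,a_j,b_i,b_j\}$ with $i,j\le n-1$; the triads $\{a_i,a_n,b_i\}$ with $i\le n-1$; and two further families of larger cocircuits, \emph{every member of which contains} $c$. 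Consequently the cocircuits of $S_{2n}$ avoiding $c$ are precisely the $4$-sets $\{a_i,a_j,b_i,b_j\}$ and the triads $\{a_i,a_n,b_i\}$. For $3$-connectedness I would check, routinely, that $S_{2n}$ is simple, cosimple and has no $2$-separation (using that any subset containing $c$ together with one element of every leg spans $S_{2n}$). For non-regularity it suffices to observe that $S_{2n}/a_n\cong Z_{n-1}$ and $Z_{n-1}\del b_{n-1}=S_{2n-2}$, so that $S_{2n}$ has an $S_8$-minor, while $S_8\del c\cong F_7\s$ is non-regular.

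It then remains to decide which of the cocircuits avoiding $c$ are non-separating. For $D\s=\{a_i,a_j,b_i,b_j\}$ with $i,j\le n-1$: any circuit of $S_{2n}$ through $a_n$ meets each of the legs $\{a_i,b_i\}$ and $\{a_j,b_j\}$, hence meets $D\s$, so $a_n$ is a coloop of $S_{2n}\del D\s$ and $D\s$ is separating. For a triad $D\s=\{a_i,a_n,b_i\}$: the triangles $\{a_k,b_k,c\}$ with $k\in\{1,\dots,n-1\}\del\{i\}$ are circuits of $S_{2n}\del D\s$ that all contain $c$ and together cover $E(S_{2n})\del D\s$, so $S_{2n}\del D\s$ is connected and $D\s$ is non-separating. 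Hence the non-separating cocircuits avoiding $c$ are exactly the $n-1$ triads $\{a_i,a_n,b_i\}$ ($i\le n-1$), which gives $c\notin Y(S_{2n})$ as above, and therefore $\y(S_{2n})=\{c\}$.

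The main obstacle I expect is the explicit determination of the cocircuit set of $S_{2n}$, and in particular the verification that the two ``large'' families of cocircuits all pass through $c$; this is what guarantees that no non-separating cocircuit other than the triads avoids $c$. Everything else is then forced, and the appeal to Theorem~\ref{principal}(a) for the bound $|\y|\le 1$ is precisely what allows us to avoid analysing the many non-separating cocircuits through $c$ and checking directly that every element distinct from $c$ lies in $Y(S_{2n})$. A minor point worth attention is the smallest case $n=4$, where $2^{n-2}=n$; but since every cocircuit in the two large families contains $c$ by construction, the argument is uniform for all $n\ge 4$.
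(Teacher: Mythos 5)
Your proof is correct and is essentially the paper's own argument: both obtain $|\y(S_{2n})|\le 1$ from Theorem~\ref{principal}(a) and then exhibit a single element avoiding exactly $n-1$ non-separating cocircuits by analysing the spike structure. The only difference is presentational: the paper invokes the self-duality of $S_{2n}$ and shows that $a_n$ avoids at most $n-1$ non-separating \emph{circuits} (the quads $\{a_i,b_i,a_j,b_j\}$ being separating because $c$ becomes a loop after contraction), whereas you work directly with the cocircuits avoiding $c$ (the quads being separating because $a_n$ becomes a coloop after deletion) and, in addition, spell out the $3$-connectivity and the non-regularity of $S_{2n}$ via its $S_8$-minor, hypotheses the paper leaves implicit.
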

\begin{proof}
By theorem \ref{principal}(i), we have that $\y(S_{2n})\le 2$. So it is enough to verify that $\y(S_{2n})\neq\emp$. As $S_{2n}$ is self-dual we  prove the proposition by showing that $a_n$ avoids at most $n-1$ non-separating circuits of $S_{2n}$. Note that the spanning circuits of $S_{2n}$ are not non-separating. It is not hard to verify that the non-spanning circuits of $S_{2n}$ avoiding $a_n$ are those in the form $\{c, a_i, b_i\}$, for some $1\le i<n-1$, or in the form $\{a_i, b_i, a_j, b_j\}$, for some $1\le i<j<n-1$ (the reader may also see \cite{Oxley}, page 662). But $c$ is a loop of the matroids in the form $S_{2n}/\{a_i, b_i, a_j, b_j\}$, which are, therefore, disconnected. Thus $a_n$ avoids at most $n-1$ non-separating circuits.
\end{proof}

Let $n\ge3$ and let $V_1$ and $V_2$ be the members of a partition of the vertex set of $K_{3,n}$ into two stable sets, where $|V_1|=3$. Define $K_{3,n}'''$ as the graph obtained from $K_{3,n}$ by adding an edge joining each pair of vertices in $V_1$. Note that the unique non-separating cocircuits of $M\s(K_{3,n}''')$ are the triangles of $K_{3,n}'''$ meeting both $V_1$ and $V_2$. So, $\y(M\s(K_{3,n}))$ is the triad $E(K_{3,n}''')-E(K_{3,n})$. Thus we have an infinite set of matroids attaining the bound $r\s_M(\y(M))=2$ in Theorem \ref{principal} (b).

\section{Complementary matroids in relation to projective geometries and a handmade classification of the rank-4 3-connected binary matroids}\label{section-complementaries}

From the uniqueness of representability of binary and ternary matroids, and from \cite[6.3.15]{Oxley}, we may conclude that:

\begin{lemma}\label{aut-extension}
Let $q\in \{2,3\}$, $s\ge2$, and $X, Y\cont E(PG(s,q))$. Suppose that there is a matroid isomorphism $\varphi:PG(s,q)|X\rightarrow PG(s,q)|Y$. Then, there is an automorphism $\Phi$ of $PG(s,q)$ that extends $\varphi$ and which restriction to $E(PG(s,q))-X$ is a matroid isomorphism between $PG(s,q)\del X$ and $PG(s,q)\del Y$. 
\end{lemma}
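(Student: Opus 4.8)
The plan is to turn the abstract matroid isomorphism $\varphi$ into a collineation of $PG(s,q)$ and then extend that collineation to the whole projective space. Write $V:=GF(q)^{s+1}$ and identify $E(PG(s,q))$ with the set of points (one-dimensional subspaces) of $V$, so that for $S\cont E(PG(s,q))$ the rank $r_{PG(s,q)}(S)$ equals $\dim\langle S\rangle$, where $\langle S\rangle$ denotes the span in $V$ of the points of $S$. Put $W_X:=\langle X\rangle$ and $W_Y:=\langle Y\rangle$; since $\varphi$ is a matroid isomorphism, $\dim W_X=r_{PG(s,q)}(X)=r_{PG(s,q)}(Y)=\dim W_Y$. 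Observe that $PG(s,q)|X$ is the restriction to $X$ of the projective geometry on $W_X$, and likewise $PG(s,q)|Y$ is the restriction to $Y$ of the projective geometry on $W_Y$; hence, after choosing bases of $W_X$ and $W_Y$, the matrices whose columns are nonzero representatives of the points of $X$ and of $Y$ are $GF(q)$-representations of the isomorphic matroids $PG(s,q)|X$ and $PG(s,q)|Y$.

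The first real step is to produce a linear map inducing $\varphi$. Since $q\in\{2,3\}$, binary and ternary matroids are uniquely representable over $GF(q)$, so these two representations are related by a row operation together with (when $q=3$) a scaling of some columns by $-1$; because $-1$ times a nonzero vector spans the same point of $PG(s,q)$, there is a linear isomorphism $\alpha\colon W_X\to W_Y$ carrying the point $x$ to the point $\varphi(x)$ for every $x\in X$. This is precisely where the uniqueness of representability and \cite[6.3.15]{Oxley} are invoked, and it is the only place where the hypothesis $q\in\{2,3\}$ is used, since it is what excludes a semilinear (Frobenius) contribution to $\varphi$.

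Next I extend $\alpha$ to $V$. Choose subspaces $U,U'$ of $V$ with $V=W_X\oplus U$ and $V=W_Y\oplus U'$; then $\dim U=s+1-\dim W_X=s+1-\dim W_Y=\dim U'$, so there is a linear isomorphism $\beta\colon U\to U'$, and $\Phi_0:=\alpha\oplus\beta$ is a linear automorphism of $V$ restricting to $\alpha$ on $W_X$. Since $\Phi_0$ permutes the points of $V$ and preserves dimensions of spans, it induces a matroid automorphism $\Phi$ of $PG(s,q)$. By construction $\Phi$ agrees with $\varphi$ on $X$, so $\Phi$ extends $\varphi$, and $\Phi(X)=\varphi(X)=Y$, hence $\Phi$ maps $E(PG(s,q))-X$ bijectively onto $E(PG(s,q))-Y$. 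As the restriction of a matroid automorphism of $PG(s,q)$ to any subset is an isomorphism onto its image, the restriction of $\Phi$ to $E(PG(s,q))-X$ is a matroid isomorphism from $PG(s,q)\del X$ onto $PG(s,q)\del Y$, which is what was claimed.

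The main obstacle is the middle step: carefully deducing from the cited uniqueness statements that an abstract matroid isomorphism between two restrictions of $PG(s,q)$ is realized by a linear map between their spans, and in particular checking that the column-rescaling freedom available over $GF(3)$ is invisible projectively, so that no Frobenius twist can intervene. Once $\alpha$ is in hand, the remainder is routine linear algebra with direct-sum decompositions.
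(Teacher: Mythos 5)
Your proof is correct and follows essentially the same route as the paper, which gives no written argument beyond citing unique representability of binary and ternary matroids together with \cite[6.3.15]{Oxley}: you realize $\varphi$ by a linear isomorphism $\alpha\colon\langle X\rangle\to\langle Y\rangle$ via unique $GF(q)$-representability (the column scalings and trivial field automorphisms being projectively invisible), extend it linearly to all of $GF(q)^{s+1}$, and read off the induced automorphism of $PG(s,q)$. This is exactly the intended argument, just written out in full detail.
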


If, for $q\in\{2,3\}$, $M$ is a rank-$r$ simple matroid representable over $GF(q)$ we have, for $s\geq r-1$, well defined up to isomorphisms, the {\it complementary of $M$ in relation to} $PG(s,q)$ as the matroid $PG(s,r)\del M:=PG(s,q)\del X$, where $X\cont E(PG(s,q))$ is a set that satisfies $M\cong PG(s,q)|X$. Lemma \ref{aut-extension} implies:

\begin{corolary}
Let $q\in\{2,3\}$,let $M$ and $N$ be simple rank-$r$ matroids representable over $GF(q)$ and let $s\ge max\{r(M),r(N)\}-1$. Then
\begin{enumerate}
\item [(a)]$M\cong N$ if, and only if, $PG(s,q)\del M\cong PG(s,q)\del N$; and:
\item [(b)]$M$ is isomorphic to a minor of $N$ if, and only if, $PG(s,q)\del N$ is isomorphic to a minor of $PG(s,q)\del M$.
\end{enumerate}
\end{corolary}

\renewcommand{\P}{\mathbb{P}}
\begin{theorem}\label{classification}
Let $\P:=PG(3,2)$. Up to isomorphisms, all the rank-$4$ binary $3$-connected matroids are:
\begin{enumerate}
\item [(i)] $F_7\s$, $S_8$, $AG(3,2)$ and $M(\mathcal{W}_4)$, up to 8 elements;
\item[(ii)] $Z_4\cong \P\del M(K_4)$, $P_9\cong\P\del [M(k_4-e)\oplus U_{1,1}]$, $M\s(K_{3,3})$ $\cong$ $\P\del [U_{2,3}\oplus U_{2,3}]$ and $M(K_5\del e)\cong\P\del P(U_{2,3},U_{3,4})$, with $9$ elements;
\item[(iii)] $\P\del M(K_4\del e)$, $\P\del [U_{2,3}\oplus U_{2,2}]$, $\P\del [U_{3,4}\oplus U_{1,1}]$ and $M(K_5)$ $\cong$ $\P\del U_{4,5}$, with $10$ elements;
\item [(iv)] $\P\del [U_{2,3}\oplus U_{1,1}]$, $\P\del U_{3,4}$ and $\P\del U_{4,4}$; with $11$ elements; and:
\item [(v)] $\P\del U_{1,1}$, $\P\del U_{2,2}$, $\P\del U_{2,3}$ and $\P\del U_{3,3}$ with more than $11$ elements.
\end{enumerate}
\end{theorem}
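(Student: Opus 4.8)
The plan is to exploit the complementary-matroid correspondence: by the Corollary immediately preceding the statement, classifying the rank-$4$ binary $3$-connected matroids is essentially the same as classifying certain simple binary matroids of rank at most $4$ (the possible "complements" inside $\P = PG(3,2)$), together with a check of which complements actually yield $3$-connected matroids. So the first step is to set up the dictionary precisely: a rank-$4$ simple binary matroid with $|E|=15-t$ elements is $\P\del X$ for a unique (up to $\mathrm{Aut}(\P)$) simple binary matroid $M'\cong \P|X$ with $|E(M')|=t$ and $r(M')\le 4$; and by Lemma \ref{aut-extension} non-isomorphic $M'$ give non-isomorphic $\P\del X$. Thus I would enumerate, by increasing number of elements $t=0,1,2,3,4,\dots$, all simple binary matroids $M'$ of rank $\le 4$, and for each decide whether $\P\del M'$ is $3$-connected. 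The small cases $t\le 5$ (equivalently $|E(\P\del M')|\ge 10$) are the bulk of items (iii), (iv), (v): here $M'$ ranges over the handful of simple binary matroids on at most $5$ elements — $U_{0,0}$ (giving $\P$ itself, which is excluded as we want proper matroids / actually $\P$ is $3$-connected but has rank $4$ and $15$ elements, to be handled), $U_{1,1}$, $U_{2,2}$, $U_{2,3}$, $U_{3,3}$, $U_{3,4}$, $U_{4,4}$, $U_{4,5}$, $U_{2,3}\oplus U_{1,1}$, $M(K_4\del e)$ (the rank-$3$ "book"), $U_{2,3}\oplus U_{2,2}$, $U_{3,4}\oplus U_{1,1}$ — and one checks directly that each complement is $3$-connected and reads off the named matroids $M(K_5)\cong\P\del U_{4,5}$, etc.

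The middle range, $t=6$ ($|E|=9$) giving item (ii) and $t=7$ ($|E|=8$) giving part of item (i), is where more care is needed, because now $M'$ must itself be a simple binary matroid on $6$ (resp.\ $7$) elements of rank $\le 4$, and there are more of them; but the key point is that $\P\del X$ is $3$-connected only for very restricted $X$, which cuts the list down drastically. Concretely, $\P\del X$ has no coloops iff $X$ spans $\P$ (rank $4$); it has no series pairs iff no two elements outside $X$ lie on a common line with ... — more usefully, I would translate $3$-connectivity of $\P\del X$ into conditions on $X=E(M')$ inside $\P$: $E(M')$ must be spanning, and no hyperplane (rank-$3$ flat, i.e.\ a Fano or smaller) of $\P$ may contain more than $|E(M')|-2$ points of $E(M')$ — otherwise the complementary points, together, would form a $2$-separation. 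Running through the simple binary rank-$4$ matroids on $6$ elements and applying this filter leaves exactly $Z_4\cong\P\del M(K_4)$, $P_9\cong\P\del[M(K_4\del e)\oplus U_{1,1}]$, $M\s(K_{3,3})\cong\P\del[U_{2,3}\oplus U_{2,3}]$, and $M(K_5\del e)\cong\P\del P(U_{2,3},U_{3,4})$; these identifications are confirmed by producing explicit $GF(2)$-representations and invoking uniqueness of binary representability. For $t\le 7$ with $|E|\le 8$ one can instead argue directly, or quote the well-known fact (e.g.\ Oxley) that the $3$-connected binary matroids with at most $8$ elements and rank $4$ are $F_7\s$, $S_8$, $AG(3,2)$, $M(\mathcal W_4)$, which gives item (i).

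The main obstacle I anticipate is the bookkeeping in the $|E|=9$ and $|E|=10$ ranges: one has to be sure the enumeration of simple binary matroids of rank $\le 4$ on $6$ or $5$ elements is complete (a small but genuine case analysis, organized by rank and by whether the matroid is connected or a direct sum / parallel connection of smaller pieces), and then verify the $3$-connectivity filter for each, and finally match each surviving complement with a familiar matroid by exhibiting a representing matrix. None of these steps is deep — it is precisely the "handmade classification" advertised in the section title — but getting the list exactly right, with no omissions and no spurious isomorphism coincidences, is the part that requires attention; Lemma \ref{aut-extension} and the Corollary are exactly what guarantee no coincidences among the listed complements, so the residual work is completeness of the enumeration plus the connectivity check.
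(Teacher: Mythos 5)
Your overall plan (classify by complementation inside $\P=PG(3,2)$, quote the known list for at most $8$ elements, enumerate the small complements for the rest) is the same as the paper's, but the step on which item (ii) hinges is wrong: your $3$-connectivity filter for $\P\del X$ is not a valid criterion in either direction. As a necessary condition it already fails for small $X$: for $X$ a two-point set, some hyperplane contains both points of $X$, i.e.\ more than $|X|-2$ of them, yet $\P\del U_{2,2}$ is $3$-connected (it appears in item (v)), so the claimed ``otherwise the complementary points would form a $2$-separation'' implication is false. More seriously, as a filter in the $9$-element case it does not exclude the one candidate that must be excluded. The six points of a copy of $M(K_{2,3})$ have no five of them in a rank-$3$ flat (deleting any edge of $K_{2,3}$ leaves a connected spanning subgraph, so every $5$-element restriction has rank $4$), hence $M(K_{2,3})$ passes your test; but $\P\del M(K_{2,3})$ is \emph{not} $3$-connected: $K_{2,3}$ is bipartite, so $M(K_{2,3})$ is affine over $GF(2)$ and embeds in the $8$-point complement of a hyperplane (a copy of $AG(3,2)$), and the two remaining points of that affine complement form a $2$-cocircuit (a series pair) of $\P\del M(K_{2,3})$. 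Thus your assertion that the filter ``leaves exactly'' the four matroids of item (ii) is incorrect — it leaves five — and the obstruction you need involves hyperplanes \emph{disjoint} from $X$ (equivalently, whether all but two points of $E(\P)-X$ fit in a hyperplane), not hyperplanes containing most of $X$. Your side claim that $\P\del X$ has no coloops iff $X$ spans is also false (e.g.\ $X$ a hyperplane gives the coloop-free $AG(3,2)$), though this is harmless for $|X|\le 6$.

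For comparison, the paper gets around both difficulties without any general criterion: it shows that every simple binary matroid on at most $6$ elements is graphic, lists the $6$-element rank-$\le4$ candidates by degree sequences of $6$-edge graphs, kills $\P\del M(K_{2,3})$ by exhibiting an explicit representing matrix with a $2$-cocircuit, and obtains all the positive $3$-connectivity verifications at once by noting that every surviving complement is a restriction of $M(K_4)\oplus U_{1,1}$ or $M(\mathcal{W}_4\del b)$, so that $\P\del X$ is a simple rank-$4$ extension of $S_8$ or $M(\mathcal{W}_4)$ and hence $3$-connected — precisely the verifications you defer to ``one checks directly''. To repair your argument you would need to replace the hyperplane-counting filter by the correct test (does $X$ lie in the complement of a hyperplane, i.e.\ is $X$ affine — for graphic $X$, is the graph bipartite — and, more generally, do two flats of total rank at most $5$ cover $E(\P)-X$ suitably), and then still supply the positive $3$-connectivity checks for the surviving complements.
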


\begin{proof}
In this proof when we're talking about equality and uniqueness, it is up to isomorphisms. We're denoting by $M$ an arbitrary $3$-connected rank-$4$ binary matroid.

By the main result of Oxley \cite{Oxley87}, the only $3$-connected rank-$4$ binary matroids with no $M(\mathcal{W}_4)$-minor and are $F_7\s$, $AG(3,2)$, $S_8$, and $Z_4$. So, the rank-$4$ binary $3$-connected matroids up to $8$ elements are $F_7\s$, $AG(3,2)$, $S_8$ and $M(\mathcal{W}_4)$. Let us find the complementaries of these matroids in relation to $\P$.

It is easy to see that $AG(3,2)=\P\del F_7$. As the unique single-element deletion of $F_7$ is $M(K_4)$, it follows that the unique rank-$4$ simple binary single-element extension of $AG(3,2)$ is $\P\del M(K_4)=Z_4$. So, the complementary of $S_8=Z_4\del b_4$ is the unique single-element extension of $M(K_4)$ different from $F_7$, that is $M(K_4)\oplus U_{1,1}$. We also have that $M(\mathcal{W}_4)=\P\del M(\mathcal{W}_4\del b)$, were $b$ is an edge in the rim of $\mathcal{W}_4$, as shown in the  graphs and the respective matrices that represent then below.

\begin{center}{\small
\begin{picture}(40,40)(0,15)
\put(20,20){\circle{40}}
\put(20,20){\circle*{3}}
\put(20,40){\circle*{3}}
\put(20,0){\circle*{3}}
\put(0,20){\circle*{3}}
\put(40,20){\circle*{3}}
\drawline(20,0)(20,40)
\drawline(0,20)(40,20)
\put(28,21){$1$}
\put(21,6){$2$}
\put(8,13){$3$}
\put(15,26){$4$}
\put(36,3){$5$}
\put(0,3){$6$}
\put(0,32){$7$}
\put(36,32){$8$}
\end{picture}
$\bordermatrix{
&1&2&3&4&5&6&7&8\cr
&0&1&1&1&1&0&0&1\cr
&1&0&1&1&1&1&0&0\cr
&1&1&0&1&0&1&1&0\cr
&1&1&1&0&0&0&1&1
}\qquad$
\begin{picture}(40,20)(0,15)
\drawline(0,20)(20,0)(40,20)(20,40)
\put(20,20){\circle*{3}}
\put(20,40){\circle*{3}}
\put(20,0){\circle*{3}}
\put(0,20){\circle*{3}}
\put(40,20){\circle*{3}}
\drawline(20,0)(20,40)
\drawline(0,20)(40,20)
\put(26,20){$5$}
\put(15,9){$6$}
\put(5,20){$2$}
\put(15,28){$1$}
\put(31,5){$7$}
\put(5,5){$4$}
\put(30,30){$3$}
\end{picture}
$\bordermatrix{
&1&2&3&4&5&6&7\cr
&1&0&0&0&1&0&1\cr
&0&1&0&0&0&1&1\cr
&0&0&1&0&1&0&1\cr
&0&0&0&1&0&1&1
}$
}\\\end{center}

As the only excluded minors for graphicness in binary matroids are $F_7$, $F_7\s$, $M\s(K_5)$ and $M\s(K_{3,3})$, then all binary matroids up to $6$ elements are graphic. So, if $|E(M)|\geq9$, then $\P\del M$ is graphic.

The only possible degree sequences for simple connected graphs with $6$ edges and $4$ or $5$ vertices are: $(3,3,3,3)$, $(2,2,2,2,4)$, $(2,2,2,3,3)$ and $(1,2,2,3,4)$. Indeed, if $4$ appear twice in such a sequence, then the graph has at least $7$ edges. So $4$ appears at most once. The sum of the degrees must be $12$. So, as $4$ appears at most once, $1$ appears at most once too. Now it is easy to check that the possible sequence are those listed above. This implies that the unique simple matroids with $6$ elements and rank up to $4$ are: $M(K_4)$, $P(U_{2,3},U_{2,3})$, $M(K_{2,3})$, $U_{2,3}\oplus U_{3,4}$ and $M(K_4-e)\oplus U_{1,1}$.

Below we can see, in this order, a draw of $K_{3,3}$ and matrices representing $M(K_{2,3})$ and $\P\del M(K_{2,3})$: 
\begin{center}
\begin{picture}(40,20)(0,15)
\drawline(0,25)(20,0)(40,25)(20,50)(0,25)
\put(20,25){\circle*{3}}
\put(20,50){\circle*{3}}
\put(20,0){\circle*{3}}
\put(0,25){\circle*{3}}
\put(40,25){\circle*{3}}
\drawline(20,0)(20,50)
\put(20,10){$5$}
\put(31,7){$6$}
\put(5,7){$4$}
\put(20,32){$2$}
\put(31,36){$3$}
\put(5,36){$1$}
\end{picture}
{\small $\qquad
\bordermatrix{
&1&2&3&4&5&6\cr
&0&1&1&1&0&0\cr
&1&0&1&0&1&0\cr
&1&1&0&0&0&1\cr
&1&1&1&1&1&1}
\qquad\bordermatrix{
&1&2&3&4&5&6&7&8&9\cr
&1&0&0&0&1&1&0&1&1\cr
&0&1&0&0&1&0&1&1&1\cr
&0&0&1&0&0&1&1&1&1\cr
&0&0&0&1&0&0&0&0&1}$}\end{center}

Note that the last row in the second matrix corresponds to a $2$-cocircuit of $\P\del M(K_{2,3})$, which is not $3$-connected therefore. Note that all proper restrictions of $M(K_{2,3})$ are restrictions of $M(K_4)\oplus U_{1,1}$. So all $3$-connected rank-$4$ binary matroids with at least $9$ elements are complementaries of some restriction of $M(K_4)\oplus U_{1,1}$ or $M(\mathcal{W}_4\del b)$. In other hand, if $M$ is a restriction of $M(K_4)\oplus U_{1,1}$ or $M(\mathcal{W}_4\del b)$, then $\P\del M$ is a rank-$4$ extension of $S_8$ or $M(\mathcal{W}_4)$, and, therefore, $M$ is $3$-connected. This description corresponds to the matroids listed in the theorem.
\end{proof}

\section{Computational results}
\label{section-comput}

In this section we describe briefly the methods and procedures used to prove Lemma \ref{comput}. To get the list of non-separating cocircuits of a binary matroid $M$ and count how many of then avoid each element, we just use a brutal force algorithm that examines each linear combination of lines in a standard matrix representing $M$. The subroutines for this are based on well known algorithms. 

Consider a binary matrix $A$ with columns $\flist{c}{n}$, and $v=(\flist{v}{n})\in \{0,1,2\}^n$. Let, for each, $i=1,\dots,n$, $\bar{v_i}$ be the reminder of the division of $v_i$ by two. Moreover, let $i_1<\dots<i_k$ be the elements of $\{j: 1\le j\le n$ and $v_j=2\}$. We define
$$\Gamma(A,v):=\left(\begin{array}{c|c|c|c|c|c|c}
1       &\bar{v_1}&\dots &\bar{v_n}& 1      &\dots& 1      \\ 
\hline          
0  &c_1 &\dots & c_n&c_{i_1} &\dots&c_{i_k} \\                
\end{array}\right).$$

It is easy to check that, for binary matroids $M$ and $N$ and a binary matrix $A$, if $M\cong M[A]$ and there is $e\in E(N)$ such that $M\cong si(N/e)$, then there is $v\in\{0,1,2\}^n$ such that $N\cong M[\Gamma(A,v)]$. The definition of $\Gamma$ may look awkward at a first moment, but it is easy to deal computationally, and make it easier to enumerate such matroids $N$ as above. Another attractive property of $\Gamma$ is equation \eqref{relation} below.

For a binary matrix $A$ with columns labelled by $1,\dots,n$ and for an automorphism $\sigma$ of $M[A]$ it is straight to check that:
\begin{equation}\label{relation}
M[ \Gamma(A,(\flist{v}{n})) ] \cong M[ \Gamma(A,(v_{\sigma(1)},\dots,v_{\sigma(n)})) ].  
\end{equation}
The next lemma is a straight consequence of Bixby's Theorem about decomposition of non $3$-connected matroids into $2$-sums. 
\begin{lemma}
Let $N$ be a coloopless simple non-$3$-connected binary matroid with at least $4$ elements. Suppose that $e$ is an element of $N$ such that $si(N/e)$ is $3$-connected. Then $e$ belongs to a non-trivial series class of $N$.
\end{lemma}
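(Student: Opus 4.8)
The plan is to reduce to the connected case and then read off the conclusion from Bixby's $2$-sum decomposition of $N$, using the location of $e$ in the canonical tree-decomposition.

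First I would show $N$ is connected. If not, write $N=N_1\oplus\cdots\oplus N_k$ with $k\ge 2$ and each $N_i$ connected; since $N$ is simple and coloopless each $N_i$ is too, and a connected simple coloopless matroid has at least $3$ elements, so $|E(N_i)|\ge 3$. If $e\in E(N_j)$ then, as $N$ is simple, contracting $e$ creates no loops, so $si(N/e)=si(N_j/e)\oplus\bigoplus_{i\ne j}N_i$ is a direct sum of at least two nonempty matroids, hence disconnected; this contradicts $3$-connectedness of $si(N/e)$. So $N$ is connected, not $3$-connected, and has at least $4$ elements, and Bixby's Theorem gives a canonical tree-decomposition of $N$ into $2$-sums whose nodes are $3$-connected matroids, circuits $U_{n-1,n}$ $(n\ge 3)$, or cocircuits $U_{1,n}$ $(n\ge 3)$, with at least two nodes, no two adjacent nodes both circuits or both cocircuits, and with each element of $N$ a non-basepoint element of exactly one node. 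Let $G$ be the node containing $e$; since the tree has $\ge 2$ nodes, $G$ has at least one basepoint, $G$ is binary (being a minor of $N$), and because $N$ is simple a cocircuit node has at most one non-basepoint element (two such elements would form a parallel pair of $N$).

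Next comes the favourable case: if $G$ is a circuit node $U_{n-1,n}$ with at least two non-basepoint elements $e,f$, then $\{e,f\}$ is a cocircuit of $U_{n-1,n}$ avoiding its basepoints, hence a cocircuit of $N$, so $e$ lies in a non-trivial series class of $N$. It remains to derive a contradiction in the other cases using $3$-connectedness of $si(N/e)$. If $G$ is a cocircuit node, it has $e$ as its unique non-basepoint element and at least two basepoints; contracting $e$ makes all of them loops, so $N/e$, and hence $si(N/e)$, is disconnected. If $G$ is a circuit node with $e$ as its only non-basepoint element, then $G$ has at least two basepoints and $G/e$ is a shorter circuit on those basepoints (a parallel pair when $|E(G)|=3$), so $N/e$ still has a non-trivial $2$-sum decomposition and $si(N/e)$ is not $3$-connected. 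Finally, if $G$ is a $3$-connected node then $|E(G)|\ge 4$, $G$ is binary, simple and $3$-connected, so $si(G/e)$ has at least $3$ elements — when $r(G)=3$ this uses $G\in\{M(K_4),F_7\}$ and $si(G/e)\cong U_{2,3}$ — and no basepoint of $G$ becomes a loop or coloop of $G/e$ (a loop would force a parallel pair in $G$, a coloop a coloop of $G$); hence $N/e$ remains a nontrivial $2$-sum whose $G$-side keeps at least two non-basepoint elements after simplification, so $si(N/e)$ has a $2$-separation with both sides of size $\ge 2$ and is not $3$-connected.

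The main obstacle is the last point in each of the contradiction cases: verifying that simplification of $N/e$ does not collapse the $2$-sum structure, i.e. that the subtrees hanging off $G$ keep enough elements in $si(N/e)$. This reduces to the observation that every triangle of $N$ through $e$ is confined to $G$ together with its immediate neighbourhood: since a circuit of a $2$-sum through both sides must pass through the basepoint and a simple node contains no parallel pair, a triangle $\{e,x,y\}$ of $N$ has at most one element outside $E(G)$, and that element is parallel in its node to the basepoint adjacent to $G$. Consequently contracting $e$ creates no new parallel pairs among elements of the far nodes, each subtree loses at most one element to simplification (and even then produces a $2$-cocircuit or $2$-circuit in $si(N/e)$), so the decomposition into $\ge 2$ pieces genuinely survives. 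Carrying out this bookkeeping carefully is the only non-routine part of the argument.
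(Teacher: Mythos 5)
Your overall plan is exactly what the paper has in mind (the paper offers no details, just an appeal to Bixby's $2$-sum decomposition), your reduction to the connected case is fine, and your case split over the node $G$ containing $e$ is the right skeleton. The genuine problem is the final ``bookkeeping'' step, and the specific claim you base it on is false: a triangle of $N$ through $e$ can have \emph{two} elements outside $E(G)$. Indeed, if $\{e,p_1,p_2\}$ is a triangle of the node $G$ through two basepoints and the nodes across $p_1$ and $p_2$ are cocircuit nodes with non-basepoint elements $x$ and $y$ (which is compatible with $N$ being simple, since each cocircuit node may carry a single element of $N$ and pass on to further nodes), then $\{e,x,y\}$ is a triangle of $N$; so contracting $e$ \emph{does} create parallel pairs between elements of two different far subtrees, contrary to your assertion. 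Likewise, the statement that the $G$-side keeps at least two non-basepoint elements after simplification can fail: $G$ may have few (even no) non-basepoint elements besides $e$, and moreover the $G$-side loses one element for every triangle of $G$ through $e$ avoiding the basepoints, which is precisely the place where binarity must do real work (a non-binary node such as $U_{2,4}$ makes the lemma false, e.g.\ five points with one $3$-point line, so any correct argument must exploit that a $3$-connected binary node with at least $4$ elements has rank at least $3$). As written, the survival of the $2$-separation under simplification — the step you yourself identify as the crux — is not established, in both the ``circuit node with $e$ its only non-basepoint element'' case and the ``$3$-connected node'' case.

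The repair is to argue with ranks (vertical $2$-separations) rather than by counting surviving elements. Fix a tree edge at $G$ with basepoint $p$, giving $N=M_1\oplus_2 M_2$ with $e\in E(M_1)-p$, and set $B:=E(M_2)-p$. Since $G$ is simple in these cases, $\{e,p\}$ is not a circuit of $M_1$, so $e\notin cl_N(B)$ and $r_{N/e}(B)=r_N(B)=r(M_2)\ge 2$; on the other side $r_{N/e}(E(M_1)-p-e)=r(M_1)-1\ge 2$, because $r(M_1)\ge 3$ (for a $3$-connected node this is where binarity enters; for a circuit node with $e$ its only non-basepoint element it follows from the connectivity and simplicity of the subtree across a second basepoint), while $\lambda_{N/e}(B)\le\lambda_N(B)=1$. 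It remains to see this vertical $2$-separation passes to $si(N/e)$: no two elements of $B$ are parallel in $N/e$ (that would again force $\{e,p\}$ to be a circuit of $M_1$), and simplicity plus binarity show at most one parallel class of $N/e$ meets both sides, so a short argument (using that $M_2$ is connected) shows one cannot have both sides collapse to rank $1$ after reallocating that single class; hence $si(N/e)$ retains a $2$-separation with both sides of size at least $2$. With this replacement your proof goes through and is, in substance, the expansion of the paper's one-line citation of Bixby's theorem.
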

From this lemma we conclude:
\begin{corolary}
Let $A$ be a binary matrix with $n\ge4$ columns and $v\in\{0,1,2\}^n$. If $M[A]$ is $3$-connected and $M[\Gamma(A,v)]$ is cossimple, then $M[\Gamma(A,v)]$ is $3$-connected.
\end{corolary}

\newcommand{\F}{\mathcal{F}}
\newcommand{\M}{\mathcal{M}}
\renewcommand{\L}{\mathcal{L}}\newcommand{\A}{\mathcal{A}}
Let $A:=[I_r|D]$ be an $r\times n$ binary matrix. We define: $\L(A):=\{\Gamma(A,v); v\in \{0,2\}^r\times\{0,1,2\}^{n-r}\}$. Let $\M$ be a family of binary matroids we define a family of binary matrices $\A$ to be a {\bf standard vector representation} of $\M$ if each matroid of $\M$ is isomorphic to $M[A]$ for some $A\in \A$ and all matrices in $\A$ are in the standard form. For a family of binary matrices $\A$ we denote $M[\A]:=\{M[A]: A\in \A\}$ and $M\s[\A]:=\{M\s[A]: A\in \A\}$ . We simplify the language saying that standard vector representation of $M[A]$ is a {\bf standard vector representation} of $\A$.

\begin{lemma} If $A$ is a binary matrix and $M$ is a matroid with an element $e$ such that $M[A]\cong co(M\del e)$ and $|E(M)|-|E(A(M))|\le k$, then  $M$ is isomorphic to a matroid represented by a matrix in $\L^k(A)$.
\end{lemma}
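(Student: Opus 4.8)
The plan is to argue by induction on $k$. The engine is the quoted property of the operator $\Gamma$: every binary matroid $N$ having an element $f$ with $si(N/f)\cong M[A]$ is isomorphic to $M[\Gamma(A,v)]$ for some $v\in\{0,1,2\}^n$. Two auxiliary facts let us push the parameter $v$ into the restricted range $\{0,2\}^r\times\{0,1,2\}^{n-r}$ that defines $\mathcal{L}(A)$: equation \eqref{relation}, which permutes the coordinates of $v$ by automorphisms of $M[A]$, and the observation that a coordinate $v_j=1$ among the first $r$ entries may be turned into a $0$ by a column operation against the adjoined identity column, without changing the represented matroid. That restricted range is precisely the one keeping $\Gamma(A,v)$ in standard form $[\,I\,|\,D\,]$, so that $\mathcal{L}$ can be applied again. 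Since $e\in E(M)$ while $e\notin E(M[A])=E(co(M\del e))$, we always have $|E(M)|-|E(M[A])|\ge 1$, and it suffices to treat the case in which $k$ equals this difference exactly.

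In the base case $k=1$ we have $M\del e=co(M\del e)=M[A]$, so $M$ is obtained from $M[A]$ by adjoining the single element $e$; passing to the dual, $si(M\s/e)\cong M[A]\s$, and the property of $\Gamma$ applied to a standard representation of $M[A]\s$ (together with the normalisation above) gives $M\in M[\mathcal{L}(A)]$. For the inductive step I would use the structure of $co(M\del e)$: it arises from $M\del e$ by contracting one element of each non-trivial series class, so one can peel off a single element $f$ of $M$ — namely $f=e$ when $M\del e$ is already cosimple, and otherwise an element of a non-trivial series class of $M\del e$ — so that the matroid $M^-$ obtained by removing $f$ (deleting $e$, resp.\ contracting the series element) still satisfies $co(M^-\del e)\cong M[A]$ and has exactly one fewer element. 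By induction $M^-\cong M[B]$ for some $B\in\mathcal{L}^{k-1}(A)$, which we may take in standard form. It then remains to recognise $M$ as a single $\Gamma$-extension of $M[B]$: one checks that $f$ can be chosen with $si(M/f)\cong M^-$ — in the deletion case after dualising, using that a series coextension is the reverse of a simplification-of-a-contraction — so the property of $\Gamma$ yields $w$ with $M\cong M[\Gamma(B,w)]$, and normalising $w$ places $\Gamma(B,w)$ in $\mathcal{L}(B)\subseteq\mathcal{L}^k(A)$.

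The hard part will be the bookkeeping forced by the simplification operators. The naive ``remove one element'' move is a deletion or a contraction, whereas the inverse of $\mathcal{L}$ is the operator $N\mapsto si(N/f)$; so in each peeling step I must verify that the removed element lies in a series class — for the deletion of $e$ this is arranged after passing to the dual — so that reversing it is genuinely a $\Gamma$-step, and I must confirm that the intermediate matroids $M^-$, which need not be $3$-connected or even cosimple, still satisfy the hypothesis $co(M^-\del e)\cong M[A]$ that drives the induction. One must also be careful that every matrix produced by $\mathcal{L}$ is genuinely in standard form with the enlarged identity block; this is what permits $\mathcal{L}$ to be iterated and is the reason the definition of $\mathcal{L}(A)$ restricts the first $r$ coordinates of the parameter to $\{0,2\}$.
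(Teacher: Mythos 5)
Your reduction of the parameter $v$ to the range $\{0,2\}^r\times\{0,1,2\}^{n-r}$ rests on the claim that an entry $v_j=1$ over an identity column can be turned into a $0$ ``by a column operation against the adjoined identity column, without changing the represented matroid.'' That claim is false: column operations do change binary matroids. Concretely, let $A=[I_2\,|\,(1,1)^T]$, so $M[A]\cong U_{2,3}$. The columns of $\Gamma(A,(1,0,0))$ are $(1,0,0)^T,(1,1,0)^T,(0,0,1)^T,(0,1,1)^T$, whose only circuit is the whole ground set, so $M[\Gamma(A,(1,0,0))]\cong U_{3,4}$; adding the adjoined column to the second one produces $\Gamma(A,(0,0,0))$, which has a triangle avoiding the new element, hence a non-isomorphic matroid. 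The restricted range does suffice, but for a different reason: choose as new basis vectors actual columns of the extension lying over the identity columns (equivalently, add the row indexed by the offending identity coordinate to the new top row). This is a row operation; it kills the unwanted $1$ but simultaneously flips the top entry of every column whose $A$-part has a $1$ in that coordinate, so $v$ changes elsewhere too --- harmless, but it is an argument your write-up does not actually make.

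The more serious gap is the primal/dual bookkeeping, already visible in your base case. From $si(M\s/e)\cong M[A]\s$ and the quoted property of $\Gamma$ applied to a standard representation $A'$ of $M[A]\s$ you may conclude that $M\s$ is represented by a matrix in $\mathcal{L}(A')$; this is not the asserted $M\in M[\mathcal{L}(A)]$, and the latter is impossible: unless $e$ is a coloop, $r(M)=r(M[A])$, while every matroid represented by a matrix in $\mathcal{L}(A)$ has rank $r(M[A])+1$, since the restricted $\Gamma$ always adjoins a new identity column. The same mismatch breaks the inductive step: peeling $M$ down to $M[A]$ uses one deletion (of $e$) and several contractions of series elements, whose reversals are one extension and several coextensions; only coextension-plus-parallel moves are $\Gamma$-steps, and dualising ``only when needed'' cannot be composed into membership in $\mathcal{L}^k(A)$ for a single fixed $A$, because all matrices in that family live on one side of duality. (A smaller further problem: your intermediate matroid $M^-=M/f$ need not be simple, so $si(M/f)$ need not equal $M[B]$ and the $\Gamma$-property does not apply verbatim.) The paper gives no proof of this lemma, and the way it is invoked in Lemma \ref{algorithm}, where all tests are applied to $M\s[A]$, shows the intended reading: the matrices track the \emph{duals}. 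With that reading the argument needs no per-element induction at all: if $co(M\del e)\cong M\s[A]$, then $si(M\s/e)\cong M[A]$, and a single $\Gamma$-step --- with the coordinates equal to $2$ absorbing, all at once, the series elements of $M\del e$, and with the basis-choice normalisation above --- gives $M\s\cong M[\Gamma(A,v)]$ with $v$ in the restricted range; one iterates once per unit of corank difference, which is at most the element difference $k$. You should recast the proof along these lines rather than by peeling single elements.
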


The following lemma describe the procedures that are being used to prove Conjecture \ref{todo2}.

\begin{lemma}\label{unique}
If $M$ is a $3$-connected regular matroid, with rank $5$, with a $M(K_{3,3}''')$-minor and with no $M(K_5)$-minor, then $M\cong M(K_{3,3}''')$.
\end{lemma}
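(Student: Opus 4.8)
The goal is to show that the only rank-$5$ $3$-connected regular matroid with an $M(K_{3,3}''')$-minor but no $M(K_5)$-minor is $M(K_{3,3}''')$ itself. The natural approach is to work with the dual statement and exploit the classification machinery already available: show that a corank-$5$ $3$-connected regular matroid $N=M\s$ with an $M\s(K_{3,3}''')$-minor and no $M\s(K_5)$-minor satisfies $N\cong M\s(K_{3,3}''')$. First I would argue that $N$ is cographic. Since $r(N)=5$ and $N$ is regular, $N$ has no $R_{12}$-minor (too large); if $N$ had an $R_{10}$-minor then, $R_{10}$ being a splitter for the regular matroids, $N\cong R_{10}$, which has no $M\s(K_{3,3}''')$-minor — a contradiction. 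By Seymour's decomposition theorem this forces $N$ to be graphic or cographic; since $N$ has an $M\s(K_{3,3})$-minor and $K_{3,3}$ is nonplanar, $N$ is cographic. Hence $N\cong M\s(G)$ for some $3$-connected graph $G$ with $5$ independent cycles, i.e.\ $|E(G)|-|V(G)|+1=5$.

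Next I would reduce to a finite graph-theoretic check. Because $N$ has an $M\s(K_{3,3}''')$-minor, $G$ has a $K_{3,3}'''$-minor, and since $M\s(K_{3,3}''')$ and $M\s(G)$ both have corank $5$, $M\s(G)$ is a corank-preserving (cographic) extension of $M\s(K_{3,3}''')$; equivalently $G$ is obtained from $K_{3,3}'''$ by a sequence of edge additions and vertex splittings that do not change the cycle rank, so $G$ is a simple $3$-connected graph on at most $6$ vertices extending $K_{3,3}'''$ (here one uses that $K_{3,3}'''$ already has $6$ vertices and $9$ edges, and adding edges keeps the cycle rank $\ge 5$, while we need exactly $5$, so in fact no edges can be added beyond those already determined). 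I would then enumerate all simple graphs on $6$ vertices containing $K_{3,3}'''$ as a subgraph: $K_{3,3}'''$ has $6$ vertices and $9$ edges; any proper extension adds an edge between two vertices of the part $V_2$ (the $3$ vertices of $V_1$ are already mutually adjacent), producing a graph with a $K_{3,3}^{(i,j)}$ with $j\ge 1$. By Lemma \ref{k33-ext}(c), every such $M\s(K_{3,3}^{(i,j)})$ with $i,j\ge 1$ has an $M\s(K_5)$-minor, contradicting the hypothesis. Therefore no edge can be added, and $G=K_{3,3}'''$, giving $N\cong M\s(K_{3,3}''')$ and hence $M\cong M(K_{3,3}''')$.

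The main obstacle is making the ``corank-preserving extension'' step airtight: one must verify that a $3$-connected cographic matroid of corank $5$ with an $M\s(K_{3,3}''')$-minor really is $M\s$ of a graph obtained from $K_{3,3}'''$ by edge-additions only (no extra vertices), and that the resulting finite list of candidate graphs is exactly $\{K_{3,3}^{(i,j)}: 3\ge i\ge j\ge 0,\ i\ge 3\}$ together with graphs obtained by further edge additions inside $V_1$ or $V_2$. Since $V_1$ is already complete in $K_{3,3}'''$, the only new edges lie inside $V_2$, so the candidates are $K_{3,3}^{(3,1)}$, $K_{3,3}^{(3,2)}$, $K_{3,3}^{(3,3)}$, each of which contains $K_{3,3}^{(1,1)}$ as a subgraph and hence has an $M\s(K_5)$-minor by Lemma \ref{k33-ext}(c). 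This disposes of all cases and completes the proof.
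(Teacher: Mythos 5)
Your overall route is the paper's argument in dual form, and its skeleton is sound: rule out $R_{12}$, use the splitter theorem to rule out $R_{10}$ (it has only $10$ elements, so no $M^*(K_{3,3}''')$-minor), invoke Seymour's theorem to conclude the matroid is cographic (equivalently, that $M$ itself is graphic), reduce to a simple $3$-connected graph on six vertices containing $K_{3,3}'''$, and eliminate every proper extension via Lemma \ref{k33-ext}(c) together with the excluded $M^*(K_5)$-minor. That is exactly what the paper does, except that it stays on the primal side: $M$ is graphic, hence the cycle matroid of a $6$-vertex simple graph properly extending $K_{3,3}'''$ unless $M\cong M(K_{3,3}''')$, and Lemma \ref{k33-ext} finishes.

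There is, however, a concrete flaw in your middle step caused by confusing rank with corank. For $N=M^*$ one has $r^*(N)=r(M)=5$, not $r(N)=5$; writing $N=M^*(G)$, this says $|V(G)|-1=5$, i.e.\ $G$ has exactly six vertices --- it does \emph{not} say $G$ has cycle rank $5$. Your constraint ``$|E(G)|-|V(G)|+1=5$'' is in fact incompatible with the graph you are aiming for: $K_{3,3}'''$ has $12$ edges (not $9$) and cycle rank $12-6+1=7$. Consequently the parenthetical deduction that ``no edges can be added because the cycle rank must stay exactly $5$'' is not a valid argument; nothing in the corank hypothesis forbids adding edges, since the corank of $M^*(G)$ counts vertices, not independent cycles. (Likewise, ``no $R_{12}$-minor'' is correct, but because $r^*(R_{12})=6>5=r^*(N)$, not because of the rank of $N$.) The exclusion of added edges must come, as you indeed argue at the end, from the minor hypothesis: the only non-edges of $K_{3,3}'''$ lie inside $V_2$, so a proper $6$-vertex extension is some $K_{3,3}^{(3,j)}$ with $j\ge1$, whose bond matroid has an $M^*(K_5)$-minor by Lemma \ref{k33-ext}(c), a contradiction. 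With the corank bookkeeping repaired and the spurious cycle-rank parenthetical deleted, your proof coincides with the paper's.
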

\begin{proof}
Since, $M$ has no $R_{12}$ minor and $M\ncong R_{10}$ and $M$ is not cographic, them $M$ is graphic. So, if $M\ncong K_{3,3}'''$, then $M$ is the cycle matroid of a $6$-vertex simple graph properly extending $K_{3,3}'''$. The result follows from Lemma \ref{k33-ext}.
\end{proof}

\begin{lemma}\label{algorithm}
Suppose that $M'$ is a $3$-connected regular matroid with no $M\s(K_5)$ and with an $M\s(K_{3,3}''')$-minor such that $r\s(M')\le 9$ and $r\s_{M'}(\y(M'))\ge 3$. Then a matroid isomorphic to $M'$ can be found with the following procedure:
\end{lemma}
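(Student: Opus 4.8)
The plan is to describe a recursive enumeration procedure that, starting from the small ``seed'' matroid produced by Lemma~\ref{unique}, builds up (inverse images of the cosimplification operation of) all the matroids $M'$ that could possibly violate Conjecture~\ref{todo2}. The idea is exactly the reduction engine of Section~3, read backwards. By Lemma~\ref{engine} (applied with $l=2$) and the definitions, any $3$-connected regular matroid $M'$ with no $M\s(K_5)$-minor, with an $M\s(K_{3,3}''')$-minor, with $r\s(M')\le 9$, and with $r\s_{M'}(\y(M'))\ge 3$ either has a vertically $M\s(K_{3,3}''')$-removable element $e$ with $r\s_{co(M'\del e)}(\y(co(M'\del e)))\ge 3$, or else has an $M\s(K_{3,3}''')$-pyramid $e_1,e_2,e_3,f_1,f_2,f_3$ such that one of $M'\del\{e_1,e_2,e_3\}$, $M'\del f_1$, $M'\del f_2$, $M'\del f_3$ has $\y$-rank at least $3$. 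In the first case $co(M'\del e)$ is again a regular matroid with no $M\s(K_5)$-minor (removable minors are preserved: one cannot create an $M\s(K_5)$-minor by cosimplifying a deletion, since $M\s(K_5)$ is $3$-connected and deletion/cosimplification only loses minors), still has an $M\s(K_{3,3}''')$-minor by the definition of vertically $N$-removable, has corank one less, and $\y$-rank still $\ge 3$; in the pyramid case the relevant $K$ is obtained from $M'$ by a single deletion followed by cosimplification, so the same closure properties hold, and by Lemma~\ref{ylift2}(d) the $\y$-rank of $K$ is again $\ge 3$. Iterating, since corank strictly decreases and stays $\ge 5$ (the base case $r\s=5$ being exactly Lemma~\ref{unique}, which forces $M\cong M\s(K_{3,3}''')$; here one checks $r\s_{M\s(K_{3,3}''')}(\y)=2$, contradicting $\ge 3$, so in fact no violator of corank $5$ exists), we reach a contradiction unless some violator $M'$ of corank $\le 9$ arises as a ``one-step ancestor'' of a strictly smaller violator. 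Hence it suffices to enumerate, level by level in corank, all candidates obtained from the previous level by the two inverse operations.

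Concretely, the procedure I would state is: maintain a list $\mathcal{L}_k$ of (standard binary matrix representatives of) all $3$-connected regular matroids of corank $k$ with an $M\s(K_{3,3}''')$-minor, no $M\s(K_5)$-minor, and $\y$-rank at least $3$, for $k=5,6,\dots,9$. Initialize $\mathcal{L}_5=\emptyset$ by Lemma~\ref{unique} (as noted above, the only corank-$5$ candidate is $M\s(K_{3,3}''')$, which has $\y$-rank $2$). Given $\mathcal{L}_k$, form $\mathcal{L}_{k+1}$ as follows: for each $M\in\mathcal{L}_k$ and each $v$ in the parameter set of Lemma's $\Gamma$-construction, form $M[\Gamma(A,v)]$ where $A$ represents $M$; by the Corollary following Bixby's decomposition lemma, if the result is cosimple it is automatically $3$-connected; discard it unless it is regular, has no $M\s(K_5)$-minor, has an $M\s(K_{3,3}''')$-minor, has corank $k+1$, and has $\y$-rank $\ge 3$ (this last test uses the brute-force non-separating-cocircuit enumeration of Section~8). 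Also perform the analogous ``inverse pyramid'' step: for each $M\in\mathcal{L}_k$ playing the role of one of the four matroids $K$ in Lemma~\ref{ylift2}(d), reconstruct the possible $M'$ obtained by re-adding the deleted element(s) and the series classes killed by cosimplification, again filtering by the same conditions. The lemma's conclusion is that $M'$ (up to isomorphism) appears in $\bigcup_{k=5}^{9}\mathcal{L}_k$; since by induction each $\mathcal{L}_k$ for $k\le 9$ is a finite, explicitly computable set, and one checks each member has $\y$-rank $\le 2$ — contradiction — Conjecture~\ref{todo2} follows, and with it Theorem~\ref{todo-teo}.

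The key steps, in order, are: (1) verify that all three ``good'' properties — regularity, absence of an $M\s(K_5)$-minor, presence of an $M\s(K_{3,3}''')$-minor — are inherited by $co(M\del e)$ for a vertically $M\s(K_{3,3}''')$-removable $e$, and by the pyramid-children $K$; (2) verify the $\y$-rank bound $\ge 3$ propagates downward, which is precisely Corollary~\ref{ylift1} in the single-element case and Lemma~\ref{ylift2}(d) in the pyramid case; (3) set up the $\Gamma$-based inverse cosimplification so that every possible single-element coextension-then-cosimplification is hit (this is the content of the $\mathcal{L}^k(A)$ lemma), and bound the number of series elements that can be re-added using $r\s(M')\le 9$; (4) organize the isomorphism reduction using equation~\eqref{relation} and Lemma~\ref{aut-extension}-style arguments so the lists $\mathcal{L}_k$ stay small enough to be tractable; (5) run the brute-force non-separating-cocircuit count to evaluate $\y$ on each survivor. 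The main obstacle I anticipate is step~(3)–(4): controlling the combinatorial explosion of the inverse operations. Each coextension can introduce an arbitrary-length series class, and although $r\s(M')\le 9$ caps the total, the number of $v\in\{0,1,2\}^n$ with $n$ already large is substantial; making the enumeration genuinely finite and small requires pruning aggressively by the regular/$M\s(K_5)$-free/$\y$-rank filters at every level and quotienting by automorphisms early, and it is here that the ``handmade classification'' of Section~7 and the complementary-matroid machinery are meant to keep the rank-$4$ base data, and hence the rank-$5$-through-$9$ towers, manageable. The remaining steps are routine given the preceding lemmas.
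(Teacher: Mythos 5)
There is a genuine gap, and it sits at the very first step of your argument: you invoke Lemma~\ref{engine} with $l=2$ for a matroid $M'$ with $r\s(M')\le 9$ and $N=M\s(K_{3,3}''')$. But $r\s(M\s(K_{3,3}'''))=5$, so $r\s(M')-r\s(N)\le 4$, while the hypothesis of Lemma~\ref{engine} for $l=2$ (inherited from Theorem~\ref{costalonga}, which supplies the pyramid) requires $r\s(M')-r\s(N)\ge 2+l+\lfloor l/2\rfloor=5$. Hence neither alternative (a) nor the pyramid alternative (b) is available in the corank range $6\le r\s(M')\le 9$, and your descent ``maintaining $\y$-rank $\ge 3$ at every level'' collapses. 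Indeed, if that descent worked, your own base case (corank $5$ forces $M\cong M\s(K_{3,3}''')$ by Lemma~\ref{unique}, which has $r\s(\y)=2$) would prove Conjecture~\ref{todo2} with empty lists and no computation at all --- which is exactly what the engine cannot deliver here and why the paper leaves this range to a computer check. In this range only the weaker instances apply: $l=0$ (needs corank difference $\ge 2$, i.e.\ $r\s(M')\ge 7$) and $l=1$ (needs $\ge 3$, i.e.\ $r\s(M')\ge 8$), which is precisely why the procedure's filters are only ``$\y\neq\emp$'' at corank $6$ and ``$r\s(\y)\ge 2$'' at coranks $7$ and $8$, rather than ``$\y$-rank $\ge 3$''.

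The paper's proof runs in the opposite direction and with these weaker tools: it uses Theorem~\ref{whittle} and Lemma~\ref{unique} to build an ascending chain $M(K_{3,3}''')=M_5,M_6,\dots,M_r=M^{'*}$ with $M_{i-1}=si(M_i/e_i)$, and then applies Lemma~\ref{engine} with $l=0$ (when $r\s(M')\ge 7$) and $l=1$ (when $r\s(M')\ge 8$) to guarantee that the intermediate matroids in such a chain can be chosen to satisfy the corresponding filter conditions, so that $M^{'*}$ necessarily appears among the coextensions enumerated via $\Gamma$ and $\L$. Your steps (3)--(5) (the $\Gamma$-based inverse cosimplification, isomorphism reduction via \eqref{relation}, and the brute-force $\y$ evaluation) match the paper, and your observation that a single inverse step may add series elements is a fair concern about $\L^k(A)$; but the logical core --- which filter strength is actually justified at each corank, and hence why the lemma only claims completeness of the enumeration rather than a contradiction --- needs to be redone with the correct (weaker) instances of Lemma~\ref{engine}. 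Also note that the lemma itself asserts only that any hypothetical $M'$ is found by the procedure; concluding Conjecture~\ref{todo2} additionally requires the computational verification that every survivor has $r\s(\y)\le 2$, which your write-up treats as already settled by the (invalid) descent.
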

\begin{enumerate}
 \item Let $A_0$ be a standard binary matrix representing $M(K_{3,3}''')$. Let $\A_6$ be a standard vector representation of\\
 \centerline{$\{A\in \L{A_0}$, $M\s[A]$ is regular,  $3$-connected and has no $K_5$-minor and  $\y(M\s[A])\neq \emp\}$.}
 Check if all the matroids $M\in M\s[\A_6]$ satisfy $\y(M\s)\le 2$.
 
\item Let $\L_6=\A_6$. For $i=7,8,9$ do the following step;

\item Let $$\L_i:=\bigcup\limits_{A\in \L_{i-1}}\L(A)$$ and let $\A_i$ be a set of representatives of the following family:\\
\centerline{$\{A\in \L_i, M[A]$ $M$ is regular, $3$-connected and has no $K_5$-minor and $r\s\y(M\s[A])\ge 2\}$.}
Check if all the matroids $M\in M\s[\A_i]$ satisfy $\y(M\s)\le 2$.
\end{enumerate}

\begin{proofof}\emph{Proof of Lemma \ref{algorithm} }: We have to prove that if such $M'$ exists, then $M^{'*}$ is isomorphic to a matroid in $M\s[\A_6]\u \cdots\u M\s[\A_9]$. Suppose for a contradiction that this does not hold.

Let $r=r\s(M')$. By Lemma \ref{whittle} and by Lemma \ref{unique}, there is a chain of matroids $M(K_{3,3}''')=M_5,M_6,\dots M_r=M^{'*}$,  such that for each $i=6,\dots,r$ there is an element $e_i\in E(M_i)$ such that $M_{i-1}=si(M_i/e_i)$.


If $r=6$ it is clear that there is a matroid in $M\s[\A_6]$ isomorphic do $M$. So $r\ge 7$.

If $r=7$, since $r(M')-r(M(K_{3,3}'''))\ge 2$, by Lemma \ref{engine} for $l=0$, $r_{M_6}(\y(M_6\s))\ge 1$. So $M^{'*}$ is isomorphic to a matroid in $\A_7$. Hence $r\ge 8$.

If $r=8$, since $r(M')-r(M(K_{3,3}'''))\ge 3$, by Lemma \ref{engine}, for $l=1$, $r_{M_7}(\y(M_7\s))\ge 2$. So $M^{'*}$ is isomorphic to a matroid in $\A_8$. Hence $r=9$. Analogously we prove that $M'\in\A_9$ and arrive at a contradiction.
\end{proofof}

\section*{Acknowledgments}
The author thanks to Manoel Lemos for suggesting an approach for one of his conjectures, which generalization gave rise to the results established here.

\end{document}